\tikzset{big arrow/.style={
		-Stealth,line cap=round,line width=1mm,
		shorten <=1mm,shorten >=1mm}}
\theoremstyle{plain}
\theoremstyle{definition}
\newtheorem{thm}{Theorem}
\newtheorem{cor}{Corollary}
\newtheorem{prop}{Proposition}
\newtheorem{lem}{Lemma}
\newtheorem{remark}{Remark}
\begin{document}
\title{A presentation for a submonoid of the  symmetric inverse monoid}
\author[1]{Apatsara Sareeto}
\author[2*]{Jörg Koppitz}

\affil[1]{Institue of Mathematics, University of Potsdam, Potsdam, 14476, Germany  (E-mail: channypooii@gmail.com)}
\affil[2*]{Corresponding author. Institute of Mathematics and Informatics, Bulgarian Academy of Sciences, Sofia, 1113, Bulgaria  (E-mail: koppitz@math.bas.bg).}

\maketitle
\begin{abstract} A fully invarient congruence relations on the free algebra on a given type induces a variety of the given type. In contrast, a congruence relation of the free algebra provides algebra of that type. This algebra is given by a so-called presentation. In the present paper, we deal with an important class of algebras of type $(2)$, namely with semigroups of transformations on a finite set. Here, we are particularly interested in a presentation of a submonoid of the symmetric inverse monoid $I_n$.  Our main result is a presentations for $IOF_n^{par}$, the monoid of all order-preserving, fence-preserving, and parity-preserving transformations on an $n$-element set.

\vskip1em \noindent \textbf{2020 Mathematics Subject Classification: 20M05, 20M18, 20M20}

\vskip1em \noindent \textbf{Keywords: Symmetric inverse monoid, Order-preserving, Fence-preserving, Presentation}

\end{abstract}

\section{Introduction}
Let $n$ be a positive integer and let $\overline{n}$ be a finite set with $n$ elements, say $\overline{n}=\{1,...,n\}$.   We denote by $PT_n$ the monoid (under composition) of all partial transformations on $\overline{n}$. A partial injection $\alpha$ on the set $\overline{n}$ is a one-to-one function from a subset $A$ of $\overline{n}$, into $\overline{n}$. The domain of $\alpha$ is the set $A$, denoted by $dom(\alpha)$. The range of $\alpha$ is denoted by $im(\alpha)$. The empty transformation will be denoted by $\varepsilon$, it is the transformation with  $dom(\varepsilon)=\emptyset$. The symmetric inverse monoid, denoted by $I_n$, is
the inverse monoid of all injective partial transformations on $\overline{n}$. The
symmetric group, denoted by $S_n$, is the group of all permutations on $\overline{n}$, and is the
group of units of $I_n$. \\
\indent Moore (1897) found a monoid presentation of $S_n$. Since
then, there has been a widespread interest in the subject of finding presentations
of groups and semigroups related to $S_n$. See for example A\v{i}zen\v{s}tat (1958, 1962),
Fernandes (2001), Popova (1961), and Solomon (1996) and references therein.  Also, presentations for other important semigroups were found. Let's give some examples:
A presentation for the Brauer monoid was found by Kudryavtseva and
Mazorchuk \cite{gana}, and for its singular part by Maltcev and Mazorchuk \cite{mal}. For the
partition monoid and its singular part, presentations were found by East \cite{eae, jamm}.
FitzGerald \cite{zit} provided a presentation for the factorizable part of the dual symmetric inverse monoid. \\
\indent Semigroups of order-preserving transformations have long been considered in the literature. A short, and by no means comprehensive, history follows.  A\v{i}zen\v{s}tat \cite{a} and Popova \cite{popo} exhibited a presentation for $O_n$, the monoid of all order-preserving full transformations on an $n$-chain, and for $PO_n$, the monoid of all order-preserving partial transformations on an $n$-chain.  Solomon \cite{sol} established a presentation for $PO_n$. In  1996, T. Lavers
gave a presentation on the monoid of ordered partitions of a natural number,
which is (up to an isomorphism) a monoid whose elements are order-preserving transformations, and  Catarino \cite{cat2} found a presentation for the
monoid $OP_n$ of all orientation-preserving full transformations of an $n$-chain
(see also \cite{cat}). The injective counterpart of $OP_n$, i.e. the monoid $POPI_n$ of all injective orientation-preserving partial transformations on a chain with $n$ elements, was studied by Fernandes (2000).
A	Presentation for the monoid $POI_n$ and its extension $PODI_n$, the monoid of all injective order-preserving or order-reversing partial transformations on an $n$-chain, was given by Fernandes in 2001 and Fernandes et al. in 2004, respectively. See also \cite{fer}, for a survey on known presentations for various transformation monoids.   \\
\indent East \cite{east}  found a presentation of the singular part of a symmetric inverse monoid. Later, Easdown et al. studied a presentation for the dual symmetric inverse monoid in \cite{da}. East \cite{ea2} showed a presentation for the singular part of the full transformation semigroup. In \cite{ganna}, Kudryavtseva et al. studied a presentation for the partial dual symmetric inverse monoid, and in the same year, East \cite{jam} studied a symmetrical presentation for the singular part of the symmetric inverse monoid. Fernandes and Quinteiro showed presentations for monoids of finite partial isometries in \cite{feh}. Currently, Koppitz and Worawiset \cite{jl} showed ranks and presentations for order-preserving transformations with one fixed point. \\
\indent Now, we  consider the linear order $1<2<\cdot\cdot\cdot<n$ on $\overline{n}$. We say that a transformation $\alpha\in PT_n$ is order-preserving if $x< y$ implies $x\alpha\leq y\alpha$, for all $x,y\in dom(\alpha)$. \\
\indent A non-linear order, closed to linear order in some sense, is the so-called zig-zag order. The pair $(\overline{n}, \preceq)$ is called zig-zag poset or fence if 
\begin{center}
	$1 \prec 2 \succ \cdot\cdot\cdot \prec n - 1 \succ n$ or $1 \succ 2 \prec \cdot\cdot\cdot \succ n - 1 \prec n$ if n is odd \\
	and  $1 \prec 2 \succ \cdot\cdot\cdot \succ n - 1 \prec n$ or $1 \succ 2 \prec \cdot\cdot\cdot \prec n - 1 \succ n$ if n is even.
\end{center}
The definition of the partial order $\preceq$ is self-explanatory. Transformations on fences were first considered by Currie and Visentin in 1991  as well as Rutkowski in 1992. We observe that every element in a fence is either minimal or maximal. Without loss of generality, let $1 \prec 2 \succ 3 \prec\cdot\cdot\cdot \succ n$ and $1 \prec 2 \succ 3 \prec\cdot\cdot\cdot \succ n-1\prec n$, respectively. Such fences are also called up-fences. The fence $1 \succ 2 \prec 3 \succ \cdot\cdot\cdot \prec n$ and $1 \succ 2 \prec 3 \succ \cdot\cdot\cdot \prec n-1\succ n$, respectively, would be called down-fence. We observe that any $x, y \in \overline{n}$ are comparable if and only if $x \in \{y - 1, y, y + 1\}$. \\ \indent We say a transformation $\alpha \in I_n$ is fence-preserving if $x \prec y$ implies  $x\alpha \prec y\alpha$, for all $x, y \in dom (\alpha)$. We denote by $PFI_n$ the submonoid of $I_n$ of all  fence-preserving partial injections of $\overline{n}$. Fernandes et al. characterized the full transformations on $\overline{n}$ preserving the zig-zag order \cite{fer 2}. It is worth mentioning that several other properties of monoids of fence-preserving full transformations were also studied.  We denote by $IF_n$ the inverse subsemigroup of all regular elements in $PFI_n$.  Fence-preserving transformations are also studied in \cite{fer 2, Jen, Loh, Sir}. For general background on semigroups and standard notations, we refer the reader to \cite{clif, Howie}. \\ \indent Our focus in this paper is the study of a submonoid of $POI_n \bigcap IF_n$, namely the monoid $IOF_n^{par}$ of all $\alpha\in POI_n \bigcap IF_n$ such that $x$ and $x\alpha$ have the same parity for all $x\in dom (\alpha)$. It is easy to verify that $IOF_n^{par}$ forms a monoid. In \cite{Apa}, the elements of $IOF_n^{par}$ are characterized: 
\begin{prop} \label{4 choice}
	Let  $p\leq n$ and let $\alpha = \bigl(\begin{smallmatrix}
	d_1 &<&d_2&<&   \cdots & < &d_p \\
	m_1 & &m_2 & & \cdots &    & m_p
	\end{smallmatrix}\bigr) \in I_n$. Then $\alpha\in IOF_n^{par}$ if and only if  the following four conditions hold:\\
	(i) $m_1<m_2<...<m_p$. \\
	(ii) $d_1$ and $m_1$ have the same parity. \\
	(iii) $d_{i+1}-d_i=1$ if and only if $m_{i+1}-m_i=1$ for all $i\in\{1,...,p-1\}$.  \\
	(iv) $d_{i+1}-d_i$ is even if and only if $m_{i+1}-m_i$ is even for all $i\in\{1,...,p-1\}$.
\end{prop}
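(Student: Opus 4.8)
The plan is to prove the two implications separately. Throughout I would use two elementary facts about the fixed up-fence $(\overline n,\preceq)$: first, distinct $x,y\in\overline n$ are $\preceq$-comparable if and only if $|x-y|=1$; and second, in a $\preceq$-comparable pair the odd element is the $\preceq$-minimal one (equivalently, the odd points of $\overline n$ are exactly the minimal points of the up-fence and the even points the maximal ones). I would also record at the outset that the list of conditions (i)--(iv), as well as the four defining properties of $IOF_n^{par}$ (injectivity, order-preservation for $\le$, fence-preservation, parity-preservation), are all symmetric under interchanging the domain sequence $(d_1,\dots,d_p)$ with the image sequence $(m_1,\dots,m_p)$; since (i) guarantees that $\alpha^{-1}$ is again written in standard form, this lets me deduce half of the statement from the other half by passing to $\alpha^{-1}$.

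For the implication $\alpha\in IOF_n^{par}\Rightarrow$(i)--(iv): condition (i) is immediate from injectivity and order-preservation. Parity-preservation gives $d_i\equiv m_i\pmod 2$ for every $i$; taking $i=1$ yields (ii), and the identity $d_{i+1}-m_{i+1}=(d_i-m_i)+(d_{i+1}-d_i)-(m_{i+1}-m_i)$ shows that $d_{i+1}-d_i$ and $m_{i+1}-m_i$ have the same parity, i.e. (iv). For (iii): if $d_{i+1}-d_i=1$ then $d_i,d_{i+1}$ are $\preceq$-comparable, so $m_i,m_{i+1}$ are $\preceq$-comparable because $\alpha$ is fence-preserving, hence $|m_i-m_{i+1}|=1$, and since $m_i<m_{i+1}$ we get $m_{i+1}-m_i=1$. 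The converse implication in (iii) I would obtain by the same argument applied to $\alpha^{-1}$, which is legitimate because $\alpha$, being a regular element of $PFI_n$, has its inverse in $PFI_n$ equal to $\alpha^{-1}$, so $\alpha^{-1}\in PFI_n$ is fence-preserving as well.

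For the converse, assume (i)--(iv). By (i), $\alpha$ is injective and order-preserving, so $\alpha\in POI_n$. Using (ii) as the base case and (iv) for the inductive step, $d_i\equiv m_i\pmod 2$ for all $i$, so $\alpha$ is parity-preserving. To check $\alpha$ is fence-preserving, let $x\prec y$ in $dom(\alpha)$; then $x$ is odd, $y$ is even and $|x-y|=1$, so $\{x,y\}=\{d_i,d_{i+1}\}$ for some $i$ with $d_{i+1}-d_i=1$, and by (iii) $\{m_i,m_{i+1}\}$ are consecutive integers, hence one odd and one even; parity-preservation makes $x\alpha$ the odd member and $y\alpha$ the even member, so $x\alpha\prec y\alpha$. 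Thus $\alpha\in PFI_n$, and by the symmetry noted above $\alpha^{-1}$ also satisfies (i)--(iv), hence $\alpha^{-1}\in PFI_n$; therefore $\alpha=\alpha\alpha^{-1}\alpha$ with $\alpha^{-1}\in PFI_n$ exhibits $\alpha$ as a regular element of $PFI_n$, i.e. $\alpha\in IF_n$. Hence $\alpha\in POI_n\cap IF_n$ is parity-preserving, so $\alpha\in IOF_n^{par}$.

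The calculations here are all routine; the one subtle point is the reverse half of (iii) in the first implication, where fence-preservation of $\alpha$ by itself is insufficient and one must invoke $\alpha^{-1}$ — which is available precisely because $IF_n$ is defined as the set of \emph{regular} elements of $PFI_n$, forcing $\alpha^{-1}\in PFI_n$. I would also isolate the parity description of $\preceq$ on the up-fence as a short preliminary lemma, since both directions of the proof lean on it.
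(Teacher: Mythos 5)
Your argument is correct, but note that the paper you are working against does not actually prove Proposition~\ref{4 choice}: it imports the characterization verbatim from the rank paper \cite{Apa}, so there is no in-text proof to compare yours to. Taken on its own terms, your proposal is a sound, self-contained derivation. The two load-bearing observations are exactly the right ones: the parity description of $\preceq$ on the up-fence (odd points minimal, comparability iff the difference is $1$), and the fact that the reverse half of (iii) cannot be extracted from fence-preservation of $\alpha$ alone but needs $\alpha^{-1}\in PFI_n$. Your justification of the latter is legitimate given the paper's standing assertion that $IF_n$ is an inverse subsemigroup of $PFI_n$ consisting of all its regular elements: uniqueness of inverses in an inverse semigroup forces the inverse of $\alpha$ in $IF_n$ to coincide with the set-theoretic inverse $\alpha^{-1}$, so $\alpha^{-1}$ is fence-preserving. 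Likewise, in the converse direction, reducing membership in $IF_n$ to exhibiting $\alpha=\alpha\alpha^{-1}\alpha$ with $\alpha^{-1}\in PFI_n$ is exactly what the definition of $IF_n$ as the set of regular elements requires. One small point worth making explicit if you write this up: in the converse, when $x\prec y$ with $|x-y|=1$ and both lie in $dom(\alpha)$, they are necessarily adjacent in the ordered listing $d_1<\cdots<d_p$ (no integer lies strictly between them), which is what licenses the appeal to (iii). I see no gap.
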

\indent Moreover, the authors of that paper have already determined the rank of $IOF_n^{par}$ and have provided a minimal generating set (see \cite{Apa}). Our target is to exhibit a monoid presentation for $IOF_n^{par}$. \\
\indent Let ${X}$ be a set and denoted by $X^*$ the free monoid generated by $X$. A monoid presentation is an ordered pair $\langle X \ \vert \ R \rangle$, where $X$ is an alphabet and $R$ is a subset of $X^*\times X^*$. An element $(u,v)$ of $X^*\times X^*$ is called a relation and it is represented by $u\approx v$. The monoid $IOF_n^{par}$ is said to be defined by a monoid presentation (or has a monoid presentation) $\langle X \ \vert \ R \rangle$ if $IOF_n^{par}$ is isomorphic to $X^*/\rho_R$, where $\rho_R$ denotes the smallest congruence on $X^*$ containing $R$. We say that $u\approx v$, for $u,v\in X^*$, is a consequence of $R$ if $(u,v)\in \rho_R$. For more detail see \cite{ll} or \cite{r}. Since $IOF_n^{par}$ is a finite monoid, we can always exhibited a presentation for it. A usual method to find some presentations is the Guess and Prove Method described by the following theorem adapted to monoids from Ru\v{s}kuc (1995, Proposition 3.2.2).   

\begin{thm} \label{kc}
	Let $X$ be a generating set for $IOF_n^{par}$. Let $R\subseteq X^* \times X^*$ a set of relations and $W \subseteq X^*$ that the following conditions are satisﬁed: \\
	1. The generating set $X$ of $IOF_n^{par}$ satisﬁes all the relations from $R$; \\
	2. For each word $w\in X^*$, there exists a word $w'\in W$ such that the relation $w\approx w'$ is a consequence of $R$; \\
	3. $\lvert W \rvert \leq \lvert IOF_n^{par}\rvert  $. \\
	Then $IOF_n^{par}$ is deﬁned by the presentation $\langle X \ \vert \ R \rangle$.
\end{thm}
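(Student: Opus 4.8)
The plan is to exploit that $M := IOF_n^{par}$ is finite together with the universal property of $\rho_R$. Since $X$ is a generating set for $M$, the inclusion $X \hookrightarrow M$ extends uniquely to a surjective monoid homomorphism $\phi \colon X^* \to M$. Condition~1 says precisely that $(u,v) \in \ker\phi$ for every $(u,v) \in R$, i.e. $R \subseteq \ker\phi$. Because $\ker\phi$ is a congruence on $X^*$ and $\rho_R$ is, by definition, the smallest congruence on $X^*$ containing $R$, we obtain $\rho_R \subseteq \ker\phi$. Hence $\phi$ factors through the quotient: there is a (unique, surjective) monoid homomorphism $\psi \colon X^*/\rho_R \to M$ with $\psi \circ \pi = \phi$, where $\pi \colon X^* \to X^*/\rho_R$ is the canonical projection.

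Next I would observe that it now suffices to prove $\psi$ is injective, since a surjective homomorphism between finite monoids is an isomorphism as soon as the source has cardinality at most that of the target; so it is enough to show $\lvert X^*/\rho_R \rvert \le \lvert M \rvert$. For this I would use Condition~2 to bound the number of $\rho_R$-classes. Given any class $w\rho_R \in X^*/\rho_R$, choose a representative $w$; by Condition~2 there is $w' \in W$ with $(w,w') \in \rho_R$, so $w\rho_R = w'\rho_R$. Thus the map $W \to X^*/\rho_R$, $w' \mapsto w'\rho_R$, is surjective, giving $\lvert X^*/\rho_R \rvert \le \lvert W \rvert$. Combining with Condition~3 yields $\lvert X^*/\rho_R \rvert \le \lvert W \rvert \le \lvert M \rvert$.

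Putting the two steps together, $\psi$ is a surjection from $X^*/\rho_R$ onto $M$ with $\lvert X^*/\rho_R \rvert \le \lvert M \rvert < \infty$, which forces $\psi$ to be a bijection, hence a monoid isomorphism. Therefore $M \cong X^*/\rho_R$, which is exactly the assertion that $IOF_n^{par}$ is defined by the presentation $\langle X \mid R \rangle$.

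Since this is the abstract \emph{Guess and Prove} template, there is no real obstacle in the argument itself: every step is a routine application of the factorization of homomorphisms through a congruence plus an elementary counting argument, and finiteness of $M$ does all the work. The genuine difficulty — which this theorem is designed to organise — will arise only later, when its hypotheses must be verified for a concrete choice of $X$, $R$, and $W$ for $IOF_n^{par}$: exhibiting the relation set $R$, checking Condition~1 by direct computation with the transformations described in Proposition~\ref{4 choice}, and, above all, establishing Condition~2, i.e. designing a rewriting procedure that reduces an arbitrary word over $X$ to a canonical word in a suitably small set $W$ using only consequences of $R$, with $\lvert W\rvert$ controlled well enough to meet Condition~3.
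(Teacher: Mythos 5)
Your argument is correct and complete: factoring the canonical surjection $\phi\colon X^*\to IOF_n^{par}$ through $X^*/\rho_R$ via Condition~1, bounding $\lvert X^*/\rho_R\rvert\le\lvert W\rvert\le\lvert IOF_n^{par}\rvert$ via Conditions~2 and~3, and invoking finiteness to force the induced surjection to be a bijection. The paper itself offers no proof of this statement — it is quoted as an adaptation of Ru\v{s}kuc (1995, Proposition 3.2.2) — and your argument is precisely the standard one underlying that result, so there is nothing to fault and no divergence of substance to report.
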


\section{Preliminaries}

In this section, we gather the preliminary material we will need in the present paper.  Let $\overline{v}_i$ be the partial identity with the domain $\overline{n}\backslash \{i\}$ for all $i\in\{1,...,n\}$. Further, let 	

\begin{center}   
	$\overline{u}_i = \begin{pmatrix}
	1 &  \cdots  & i & i+1 & i+2 & i+3 & i+4 &  \cdots   & n \\
	3 &   \cdots   & i+2 & - & -& - & i+4 &  \cdots     & n 
	\end{pmatrix}$ \end{center}  and  $\overline{x}_i=(\overline{u}_i)^{-1}$ for all $i\in\{1,...,n-2\}$. By Proposition \ref{4 choice}, it is easy to verify that $\overline{u}_i$ as well as $\overline{x}_i$, $i\in\{1,...,n-2\}$, belong to $IOF_n^{par}$. In \cite{Apa},  the authors have shown that $\{\overline{v}_1,\overline{v}_2,...,\overline{v}_n,\overline{u}_1,\overline{u}_2,...,\overline{u}_{n-2},\overline{x}_1,\overline{x}_2,...,\overline{x}_{n-2}\}$ is a generating set of $IOF_n^{par}$. In order to use Theorem \ref{4 choice}, we define an alphabet,
\begin{center}
	$X_n=\{v_1,v_2,...,v_n,u_1,u_2,...,u_{n-2},x_1,x_2,...,x_{n-2}\}$,
\end{center}  which corresponds to a set of generators of $IOF_n^{par}$. For $w=w_1...w_m$ with $w_1,...,w_m\in X_n$ and $m$ is a positive integer, we write $w^{-1}$ for the word $w^{-1}=w_m...w_1$. We fix a particular sequence of letters as follows:
$x_{i,j}=x_ix_{i+2}...x_{i+2j-2}$ and $u_{i,j}=u_iu_{i+2}...u_{i+2j-2}$ for $i\in\{1,...,n-2\}, j\in\{1,...,\lfloor \frac{n-i}{2} \rfloor\}$ and obtain the following sets of words: 
$W_x=\{x_{i,j} : i\in\{1,...,n-2\},j\in \{1,...,\lfloor \frac{n-i}{2} \rfloor\}\}$, $W_x^{-1}=\{x_{i,j}^{-1}: x_{i,j}\in W_x\}$, and
$W_u=\{u_{i,j} : i\in\{1,...,n-2\},j\in \{1,...,\lfloor \frac{n-i}{2} \rfloor\}\}$.  Let $w$ be any word of the form $w=w_1...w_m$ with   $ w_1,...,w_m\in W_x\cup W_u$ and $m$ is a positive integer. For $k\in\{1,...,m\}$, the word $w_k$ is of the form \begin{align*}
w_k= \begin{cases}
u_{i_k,j_k} & \mbox{if} \ w_k\in W_u \\
x_{i_k,j_k} & \mbox{if} \ w_k\in W_x.
\end{cases}
\end{align*} 
We observe $j_k=\lvert w_k \rvert$, i.e. $j_k$ is the length of the word $w_k$. We define two sequences $1_x, 2_x,..., m_x$ and $1_u,2_u,...,m_u$ of indicators: for $k\in\{1,...,m\}$ let
\begin{align*}
k_x 
=& \begin{cases}
i_k+2\lvert w_k\rvert+2\lvert W_u^k\rvert-2\lvert W_x^k\rvert & \mbox{if}\ w_k\in W_u \\
i_k &  \mbox{if} \ w_k\in W_x
\end{cases}
\end{align*}  and \begin{align*}
k_u 
=& \begin{cases}
i_k+2\lvert w_k\rvert-2\lvert W_u^k\rvert+2\lvert W_x^k\rvert & \mbox{if}\ w_k\in W_x \\
i_k &  \mbox{if} \ w_k\in W_u,
\end{cases}
\end{align*}  where $W_u^k\ (W_x^k)$ means the word $w_{k+1}...w_m$ without the variables in $\{x_1,...,x_{n-2}\}$ \\ (in $\{u_1,...,u_{n-2}\} )$.  Let $Q_0$ be the set of all words $w=w_1...w_m$ with $w_1,...,w_m\in W_x\cup W_u$ and $m$ is a positive integer such that:

\begin{enumerate} [label = {$({\arabic*}_q)$}]
	\item \label{1W} If $w_k, w_l\in W_x$ then $i_k+2j_k+1<i_l$ for $k<l\leq m$; 
	\item \label{2W} If $w_k, w_l\in W_u$ then $i_k+2j_k+1<i_l$ for $k<l\leq m$;
	\item \label{3W} If $w_k\in W_u$ then $i_k+2j_k+2\leq(k+1)_u$ for $k\in\{1,...,m-1\}$  and  $(k+1)_x-k_x\geq 2$; 
	\item \label{4W}  If $w_k\in W_x$ then
	$i_k+2j_k+2\leq(k+1)_x$ for $k\in\{1,...,m-1\}$ and $(k+1)_u-k_u\geq 2$. \\
\end{enumerate}
Let now $w=w_1...w_m\in Q_0$. Then let $w^*=W_u^0(W_x^0)^{-1}$. Further, we define recursively a set $A_w$:
\begin{enumerate} [label = {$({\arabic*}_q)$}, start=5]
	\item \label{5W} 
	If $m_u>m_x$ and $m_u+2\leq n$ then $A_m=\{m_u+2,...,n\}$, \\
	if $m_u<m_x$ and $m_x+2\leq n$ then $A_m=\{m_x+2,...,n\}$, \\   
	otherwise $A_m=\emptyset$;
	\item \label{6w} If $w_k\in W_u$ then $A_k=A_{k+1}\cup\{i_k+2j_k+2,...,(k+1)_u-1\}$ for $k\in\{1,...,m-1\}$, \\
	if  $w_k\in W_x$ then  $A_k=A_{k+1}\cup\{k_u+2,...,(k+1)_u-1)\}$ for $k\in\{1,...,m-1\}$; 	
	\item \label{7W} If $1\in\{1_x,1_u\}$ then $A_w=A_1$, \\
	if $1<1_u\leq 1_x$ then $A_w=A_1\cup\{1,...,1_u-1\}$, \\
	if $1<1_x<1_u$ then $A_w=A_1\cup\{1_u-1_x+1,...,1_u-1\}$.
\end{enumerate}
 For a set $A=\{i_1<i_2<\cdot\cdot\cdot <i_k\}\subseteq \overline{n}$, let $v_A=v_{i_1}v_{i_2}...v_{i_k}$ for some $k\in\{1,...,n\}$. Note that $v_\emptyset$ means the empty word $\epsilon$. For convenience, we put  $v_i= \epsilon$ for $i\geq n+1$. Let 
\begin{center}
	$W_n=\{v_Aw^*:  w\in Q_0, A\subseteq A_w\}\cup\{v_A : A\subseteq \overline{n}\}$.
\end{center}

\noindent On the other hand, we will define now a set of relations. For this let $W_t$ be the set of all words of the form $u_{i_0}u_{i_1}...u_{i_l}x_{j_1}...x_{j_m}x_{j_{m+1}}$ with the following four properties: \\
(i) $l \in \{0,...,n-2\},$ and $m \in \{0,...,n-3\}$; \\
(ii) $i_0<i_1<\cdot\cdot\cdot<i_l\in \{1,...,n-2\};$ \\
(iii) $j_1>j_2>\cdot\cdot\cdot > j_m>j_{m+1}\in \{1,...,n-2\};$ \\
(iv) if $k\in\{i_0,...,i_{l-1}\} \ (k\in\{j_2,...,j_{m+1}\})$ then $k+1, k+3 \notin \{i_1,...,i_l\} \ (k+1, k+3\notin \{j_1,...,j_m\})$ for all $k\in\{1,...,n-3\}$.   \\
Then we define a sequence $R$ of relations on $X_n^*$ as following: For $i,j \in \{1,...,n\}$ and $k={i+2j-2}$, let \\

\noindent
$
\ (E) \ x_iu_j 
\approx  \begin{cases}
v_1v_2v_{i+3}...v_{j+3}, &  \mbox{if}\ i<j, j-i = 2,3  \\
v_1v_2v_{j+3}...v_{i+3}, &  \mbox{if}\ i>j, i-j = 2,3  \\
v_1v_2v_{j+3}v_{j+4}, &  \mbox{if}\ i>j, i-j = 1  \\
v_1v_2v_{j+2}v_{j+3}, &  \mbox{if}\ i<j, j-i = 1  \\
v_1v_2v_{i+3}, &  \mbox{if}\ i=j  \\
v_1v_2u_{j}x_{i+2}, & \mbox{if}\ i<j, j-i\geq 4 \\
v_1v_2u_{j+2}x_{i}, &  \mbox{if}\ i>j, i-j\geq 4;  
\end{cases}
$

	\begin{enumerate} [label = {$(L{\arabic*})$}]
	\item \label{l2} $u_2u_1\approx u_1u_2\approx x_1x_2\approx x_2x_1\approx u_2^2\approx x_2^2\approx v_1v_2v_3v_4v_5$;    
	\item \label{l3} $u_3u_2\approx x_2x_3\approx v_1v_2v_3v_4v_5v_6$;
	\item \label{l4}  $u_iu_1\approx v_1v_2u_i$ and $x_1x_i\approx v_3v_4x_i, i\geq 3$;
	\item \label{l5} $u_iu_2\approx v_1v_2v_3u_i$ and $x_2x_i\approx v_3v_4v_5x_i, i\geq 4$;
	\item \label{l6} $u_iu_{i-1}\approx v_{i+3}u_{i-3}u_{i-1}$ and $x_{i-1}x_i\approx v_{i+3}x_{i-1}x_{i-3}, i\geq 4$;
	\item \label{l7} $u_iu_j\approx u_{j-2}u_i$ and $x_jx_i\approx x_{i}x_{j-2}, i>j\geq 3, i-j\geq2$;
\end{enumerate}
\begin{enumerate}   [label = {$(R{\arabic*})$}]
	\item \label{a} $v_i^2\approx   v_i$, $i\in\{1,...,n\}$;
	\item \label{b} $v_iv_j\approx v_jv_i$, $i,j\in\{1,...,n\}, i\neq j$;
	\item \label{xx} $v_iu_j\approx u_jv_i$ and $v_ix_j\approx x_jv_i$, $i\in\{j+4,...,n\}$;
	\item \label{vv} $v_iu_j\approx u_jv_{i+2}$ and $v_{i+2}x_j\approx x_jv_i$, $1\leq i\leq j$;
	\item \label{uu} $v_iu_j\approx u_j$ and $x_jv_i\approx x_j$, $i\in\{j+1,j+2,j+3\}$;
	\item \label{rr} $u_jv_i\approx u_j$ and $v_ix_j\approx x_j$, $i\in\{1,2,j+3\}$; 		
	\item \label{dd} $u_1^2\approx x_1^2\approx v_1...v_4 $;   
	\item \label{f} $u_i^2\approx u_{i-2}u_i$ and $x_i^2\approx x_ix_{i-2}$, $i\geq 3$; 
	\item \label{c} $u_iu_{i+1}\approx u_{i-1}u_{i+1}$ and $x_{i+1}x_i\approx x_{i+1}x_{i-1}$, $i\in\{2,...,n-5\}$;   
	\item \label{h} $u_iu_{i+3}\approx v_{i+6}u_iu_{i+2}$ and $x_{i+3}x_i\approx v_{i+6}x_{i+2}x_i$, $i\leq n-5$;   
	\item \label{j} $w\approx v_{i_0+1}v_{i_0+2}v_{i_0+3}u_{i_1}...u_{i_l}x_{j_1}...x_{j_m}$,  $w=u_{i_0}u_{i_1}...u_{i_l}x_{j_1}...x_{j_m}x_{j_{m+1}}\in W_t$ with $j_{m+1}=i_0+2l-2m$;  
	\item \label{k} $w\approx v_{i_0}v_{i_0+1}v_{i_0+2}v_{i_0+3}u_{i_1}...u_{i_l}x_{j_1}...x_{j_m}$,  $w=u_{i_0}u_{i_1}...u_{i_l}x_{j_1}...x_{j_m}x_{j_{m+1}}\in W_t$ with $j_{m+1}=i_0+2l-2m-1$;   
	\item \label {l} $w\approx v_{i_0+1}v_{i_0+2}v_{i_0+3}v_{i_0+4}u_{i_1}...u_{i_l}x_{j_1}...x_{j_m}$,    $w=u_{i_0}u_{i_1}...u_{i_l}x_{j_1}...x_{j_m}x_{j_{m+1}}\in W_t$ with $j_{m+1}=i_0+2l-2m+1$;  
	\item \label{m} $w\approx u_{i_0}u_{i_1}...u_{i_l}x_{j_1}...x_{j_m}$, $w=u_{i_0}u_{i_1}...u_{i_l}x_{j_1}...x_{j_m}x_{j_{m+1}}\in W_t$ with $j_{m+1}<2l-2m$; 
	\item \label{mm} $w \approx u_{i_1}...u_{i_l}x_{j_1}...x_{j_m}x_{j_{m+1}}$, $w=u_{i_0}u_{i_1}...u_{i_l}x_{j_1}...x_{j_m}x_{j_{m+1}}\in W_t$ with $i_0<2m-2l$;	 
	\item \label{w} $v_1...v_iu_{i,j}\approx v_1...v_{k+3}$, $i\in\{1,...,n-2\}$;
	\item \label{x} $v_{k-i+3}...v_{k+2}x^{-1}_{i,j}\approx v_1...v_{k+3}$, $i\in\{1,...,n-2\}$;
	\item \label{zz} $v_iu_{i,j}\approx v_{k+3}u_{i-1,j}$, $i\in\{2,...,n-2\}$;
	\item \label{yy} $v_{k+2}x^{-1}_{i,j}\approx v_{k+3}x^{-1}_{{i-1},j}$, $i\in\{2,...,n-2\}$;
\end{enumerate}

\begin{lem} \label{1}  The relations from $R$ hold as equations in $IOF_n^{par}$, when the variables are replaced by the corresponding transformations.
\end{lem}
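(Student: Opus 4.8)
\emph{Proof idea.}
This is a direct verification: we substitute the transformation $\overline{v}_i$, $\overline{u}_i$, $\overline{x}_i$ for each letter $v_i$, $u_i$, $x_i$, and check for every relation $u\approx v$ in $R$ that the two partial injections obtained from the left- and right-hand sides coincide; since a partial injection is determined by its domain together with its restriction there, in each case it suffices to compare the two domains and then the images of the common points. The plan is to first record once and for all the explicit action of the generators: $\overline{v}_i$ is the identity on $\overline{n}\setminus\{i\}$; $\overline{u}_i$ maps $t\mapsto t+2$ for $t\in\{1,\dots,i\}$, is undefined on $\{i+1,i+2,i+3\}$, and is the identity on $\{i+4,\dots,n\}$; and $\overline{x}_i=\overline{u}_i^{-1}$ maps $t\mapsto t-2$ for $t\in\{3,\dots,i+2\}$ and is the identity on $\{i+4,\dots,n\}$ (undefined elsewhere). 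We also fix the convention that transformations act on the right, so that in a product $\alpha\beta$ the factor $\alpha$ is applied first, and we use throughout that $v_i=\epsilon$ for $i\ge n+1$ to absorb the degenerate boundary indices. Since the generators all belong to $IOF_n^{par}$ by Proposition~\ref{4 choice}, both sides of every relation automatically lie in $IOF_n^{par}$, so only the set-theoretic equality of the two elements has to be established.

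The relations fall into an elementary part and a part that needs an induction. In the elementary part, \ref{a} and \ref{b} merely express that the $\overline{v}_i$ are commuting idempotents. Relations \ref{xx}--\ref{rr} describe how a partial identity $\overline{v}_i$ is absorbed by, or commutes (with an index shift of $\pm2$) past, a generator $\overline{u}_j$ or $\overline{x}_j$; each reduces to a one-line comparison of domains, according to which of the three regions $\{1,\dots,j\}$, $\{j+1,j+2,j+3\}$, $\{j+4,\dots,n\}$ contains the index $i$. Relations \ref{dd}--\ref{h} and \ref{l2}--\ref{l7} are equalities between products of two (occasionally three) generators of the same type --- $\overline{u}_i^{\,2}$, $\overline{u}_i\overline{u}_{i+1}$, $\overline{u}_i\overline{u}_{i+3}$, $\overline{u}_{i-2}\overline{u}_i$, and so on, together with their $\overline{x}$-duals --- each expressing that such a short product deletes a predictable set of points from the domain and then agrees with the displayed shorter word; the small-index relations \ref{l2}--\ref{l7} are just the boundary instances not covered by the general forms \ref{dd}--\ref{h}. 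Finally, the family $(E)$ computes the mixed product $\overline{x}_i\overline{u}_j$: here $\overline{x}_i$ first shifts a block down by $2$ and $\overline{u}_j$ then shifts a possibly overlapping block up by $2$, so that, organised by the value of $i-j$ (the cases $i=j$, $\lvert i-j\rvert=1$, $\lvert i-j\rvert\in\{2,3\}$, $\lvert i-j\rvert\ge4$, each split into $i<j$ and $i>j$), the two shifts either cancel on a large set, leaving a partial identity which is exactly the displayed product of $\overline{v}$'s, or leave a residual word as in the last two cases; one only has to take care in the situations where an index such as $j+3$ or $i+3$ exceeds $n$, where the convention $v_i=\epsilon$ applies.

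The substantive part is the family \ref{j}--\ref{mm} together with the telescoping relations \ref{w}--\ref{yy}, since here the words have unbounded length. The plan is to prove first, by induction on the number of factors, a normal-form description of a product $\overline{u}_{i_1}\cdots\overline{u}_{i_l}$ (and dually of $\overline{x}_{j_1}\cdots\overline{x}_{j_m}$) whose indices are strictly increasing and obey condition (iv) in the definition of $W_t$, so that consecutive indices jump by exactly $2$ or by at least $4$: such a product again acts as a monotone, parity-preserving map, shifting an explicit union of intervals upward by an explicit even amount and having an explicitly computable domain. For a word $w=u_{i_0}u_{i_1}\cdots u_{i_l}x_{j_1}\cdots x_{j_m}x_{j_{m+1}}\in W_t$ the right-hand side of \ref{j}--\ref{mm} is, up to a prefix of at most four letters $\overline{v}$ (possibly none), obtained from $w$ by deleting the leading $u_{i_0}$, the trailing $x_{j_{m+1}}$, or both; so what must be shown is that prepending $\overline{u}_{i_0}$ and appending $\overline{x}_{j_{m+1}}$ to $\overline{u}_{i_1}\cdots\overline{u}_{i_l}\overline{x}_{j_1}\cdots\overline{x}_{j_m}$ either merely restricts its domain by a partial identity (cases \ref{j}--\ref{l}) or contributes nothing from one of the two ends (cases \ref{m} and \ref{mm}). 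Substituting the normal forms, the arithmetic alternatives $j_{m+1}=i_0+2l-2m$, $j_{m+1}=i_0+2l-2m-1$, $j_{m+1}=i_0+2l-2m+1$, and the inequalities $j_{m+1}<2l-2m$ and $i_0<2m-2l$, record precisely whether the leading $\overline{u}_{i_0}$ and the trailing $\overline{x}_{j_{m+1}}$ overlap exactly, or with a one-step offset in one of the two directions, or miss one another entirely; this fixes which letters $\overline{v}$ appear, or which of $\overline{u}_{i_0}$, $\overline{x}_{j_{m+1}}$ disappears. The same normal form applied to $u_{i,j}=u_iu_{i+2}\cdots u_{i+2j-2}$, precomposed with the partial identity $\overline{v}_1\cdots\overline{v}_i$ which deletes $\{1,\dots,i\}$ from the domain, yields \ref{w} and \ref{zz}, and the dual statement yields \ref{x} and \ref{yy}. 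The main obstacle is exactly this inductive bookkeeping: one must control at each step how the domain shrinks, and how the ``missing triple'' $\{i_t+1,i_t+2,i_t+3\}$ of the newly multiplied factor interacts with the shift already accumulated from the earlier factors, so that the final domain comes out to be $\{1,\dots,k+3\}$ --- or the prescribed residual set --- exactly; once the correct invariant has been isolated, the remaining verifications are routine.
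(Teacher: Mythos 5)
Your proposal is correct and takes essentially the same approach as the paper: a direct case-by-case verification that each relation holds as an identity of partial injections, by computing domains and images of both sides. The paper's own proof is far terser — it simply states that the verification is done diagrammatically and exhibits a single worked example (relation \ref{h}, via Figures \ref{Fig 1} and \ref{Fig 2}) — so your more systematic organization, including the inductive normal form for the unbounded families \ref{j}--\ref{yy}, is a faithful (indeed more detailed) elaboration of the same argument.
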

\begin{proof}
	We show the statement, diagrammatically. We give example calculation, for the relation \ref{h} $u_iu_{i+3}\approx v_{i+6}u_iu_{i+2}$, $i\leq n-5$, in Figure \ref{Fig 1} and \ref{Fig 2} below. Note we can show $x_{i+3}x_i\approx v_{i+6}x_{i+2}x_i$ in a similar way.
\end{proof}
\begin{figure} [H]
	\centering
	\begin{center}
		
		\begin{tikzpicture} [font=\tiny,  xshift=-1cm , bul/.style = {fill=black,circle,inner sep=0.8pt}]
		\node [anchor=east] at (-0.5,1.05) {$\overline{u}_i$};
		\node [anchor=east] at (-0.5,0.35) {$\overline{u}_{i+3}$};
		
		\draw[bul] (0,0.7) circle (1.5pt);
		\draw[bul] (0,1.4) circle (1.5pt);
		
		\draw[bul] (0.5,0) circle (1.5pt);
		\draw[bul] (0.5,0.7) circle (1.5pt);
		
		\draw[bul] (0.875,1.4) circle (0.4pt);
		\draw[bul] (0.675,1.4) circle (0.4pt);
		\draw[bul] (1.075,1.4) circle (0.4pt);
		
		\draw[bul] (1.05,0.7) circle (0.4pt);
		\draw[bul] (1.25,0.7) circle (0.4pt);
		\draw[bul] (1.45,0.7) circle (0.4pt);
		
		\draw[bul] (1.425,0) circle (0.4pt);
		\draw[bul] (1.625,0) circle (0.4pt);
		\draw[bul] (1.825,0) circle (0.4pt);

		\draw[bul] (1.5,1.4) circle (1.5pt);

		\draw[bul] (1.75,1.4) circle (1.5pt);

		\draw[bul] (2,0.7) circle (1.5pt);
		\draw[bul] (2,1.4) circle (1.5pt);

		\draw[bul] (2.25,0.7) circle (1.5pt);
		\draw[bul] (2.25,1.4) circle (1.5pt);

		\draw[bul] (2.5,0.7) circle (1.5pt);
		\draw[bul] (2.5,1.4) circle (1.5pt);
		
		\draw[bul] (2.75,0) circle (1.5pt);
		\draw[bul] (2.75,0.7) circle (1.5pt);

		\draw[bul] (3,0) circle (1.5pt);
		\draw[bul] (3,0.7) circle (1.5pt);

		\draw[bul] (3.25,0) circle (1.5pt);
		\draw[bul] (3.25,0.7) circle (1.5pt);

		\draw[bul] (3.55,0.7) circle (0.4pt);
		\draw[bul] (3.75,0.7) circle (0.4pt);
		\draw[bul] (3.95,0.7) circle (0.4pt);
		
		\draw[bul] (3.55,0) circle (0.4pt);
		\draw[bul] (3.75,0) circle (0.4pt);
		\draw[bul] (3.95,0) circle (0.4pt);
		
		\draw[bul] (3.175,1.4) circle (0.4pt);
		\draw[bul] (3.375,1.4) circle (0.4pt);
		\draw[bul] (3.575,1.4) circle (0.4pt);
		
		\draw[bul] (4.25,0) circle (1.5pt);
		\draw[bul] (4.25,0.7) circle (1.5pt);
		\draw[bul] (4.25,1.4) circle (1.5pt);
		
		\draw[bul] (5.8,1.4) circle (1.5pt);
		\draw[bul] (6.8,0) circle (1.5pt);
		\draw (5.8,1.4)--(6.8,0);
		\draw[bul] (7.3,1.4) circle (1.5pt);
		\draw[bul] (8.3,0) circle (1.5pt);
		\draw (7.3,1.4)--(8.3,0);
		\draw[bul] (9.05,1.4) circle (1.5pt);
		\draw[bul] (9.05,0) circle (1.5pt);
		\draw (9.05,1.4)--(9.05,0);
		
		\draw[bul] (7.55,1.4) circle (1.5pt);
		\draw[bul] (7.8,1.4) circle (1.5pt);
		\draw[bul] (8.05,1.4) circle (1.5pt);
		\draw[bul] (8.3,1.4) circle (1.5pt);
		\draw[bul] (8.55,1.4) circle (1.5pt);
		\draw[bul] (8.8,1.4) circle (1.5pt);
		
		\draw[bul] (8.55,0) circle (1.5pt);
		\draw[bul] (8.8,0) circle (1.5pt);
		
		\draw[bul] (10.05,1.4) circle (1.5pt);
		\draw[bul] 	(10.05,0) circle (1.5pt);
		\draw (10.05,1.4)--(10.05,0);
		
		\draw[bul] (6.35,1.4) circle (0.4pt);
		\draw[bul] (6.55,1.4) circle (0.4pt);
		\draw[bul] (6.75,1.4) circle (0.4pt);

		\draw[bul] (7.35,0) circle (0.4pt);
		\draw[bul] (7.55,0) circle (0.4pt);
		\draw[bul] (7.75,0) circle (0.4pt);
		
		\draw[bul] (9.4,0) circle (0.4pt);
		\draw[bul] (9.6,0) circle (0.4pt);
		\draw[bul] (9.8,0) circle (0.4pt);
		\draw[bul] (9.4,1.4) circle (0.4pt);
		\draw[bul] (9.6,1.4) circle (0.4pt);
		\draw[bul] (9.8,1.4) circle (0.4pt);
		
		\node [anchor=west] at (10.4,0.7) {$\overline{u}_i\overline{u}_{i+3}$};
		
		\node [anchor=south] at (5.8,1.4) {$1$};
		\node [anchor=south] at (7.3,1.4) {$i$};
		\node [anchor=south] at (9.05,1.4) {$i+7$};
		\node [anchor=south] at (10.05,1.4) {$n$};
		\node [anchor=north] at (6.8,0) {$5$};
		\node [anchor=north] at (8.3,0) {$i+4$};
		
		\node [anchor=south] at (0,1.4) {$1$};
		\node [anchor=south] at (0.5,0.7) {$3$};
		\node [anchor=south] at (1.5,1.4) {$i$};
		\node [anchor=south] at (2.5,1.4) {$i+4$};
		\node [anchor=west] at (4.75,0.7) {$=$};
		\node [anchor=south] at (4.25,1.4) {$n$};
		\node [anchor=south] at (3.25,0.7) {$i+7$};		
		\draw (0,1.4)--(0.5,0.7) (0,0.7)--(0.5,0) (1.5,1.4)--(2,0.7) (2.25,0.7)--(2.75,0) (2.5,1.4)--(2.5,0.7) (3.25,0.7)--(3.25,0) (4.25,1.4)--(4.25,0);
		\end{tikzpicture} 	
	\end{center}
	\caption{ $\overline{u}_i\overline{u}_{i+3}$.} \label{Fig 1} 
\end{figure}
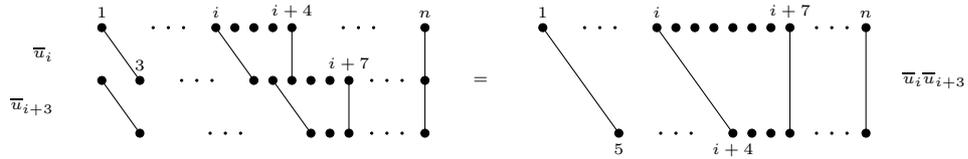

\begin{figure} [H]
	\centering
	\begin{center}
		
		\begin{tikzpicture} [font=\tiny,  xshift=-1cm , bul/.style = {fill=black,circle,inner sep=0.8pt}]
		
		\node [anchor=east] at (-0.5,1.05) {$\overline{u}_{i}$};
		\node [anchor=east] at (-0.5,0.35) {$\overline{u}_{i+2}$};
		\node [anchor=east] at (-0.5,1.75) {$\overline{v}_{i+6}$};
		
		\draw[bul] (0,2.1) circle (1.5pt);
		
		\draw[bul] (2.75,2.1) circle (1.5pt);
		\draw[bul] (3,2.1) circle (1.5pt);
		\draw[bul] (3.25,2.1) circle (1.5pt);
		
		\draw[bul] (1.175,2.1) circle (0.4pt);
		\draw[bul] (1.375,2.1) circle (0.4pt);
		\draw[bul] (1.575,2.1) circle (0.4pt);
		
		\draw[bul] (3.55,2.1) circle (0.4pt);
		\draw[bul] (3.75,2.1) circle (0.4pt);
		\draw[bul] (3.95,2.1) circle (0.4pt);
		\draw (0,2.1)--(0,1.4) (3.25,2.1)--(3.25,1.4) (2.75,2.1)--(2.75,1.4) (4.25,2.1)--(4.25,1.4);
		\draw[bul] (4.25,2.1) circle (1.5pt);

		\draw[bul] (0,0.7) circle (1.5pt);
		\draw[bul] (0,1.4) circle (1.5pt);

		\draw[bul] (0.5,0) circle (1.5pt);
		\draw[bul] (0.5,0.7) circle (1.5pt);

		\draw[bul] (0.875,1.4) circle (0.4pt);
		\draw[bul] (0.675,1.4) circle (0.4pt);
		\draw[bul] (1.075,1.4) circle (0.4pt);

		\draw[bul] (1.05,0.7) circle (0.4pt);
		\draw[bul] (1.25,0.7) circle (0.4pt);
		\draw[bul] (1.45,0.7) circle (0.4pt);
		
		\draw[bul] (1.05,0) circle (0.4pt);
		\draw[bul] (1.25,0) circle (0.4pt);
		\draw[bul] (1.45,0) circle (0.4pt);
		
		\draw[bul] (1.5,1.4) circle (1.5pt);

		\draw[bul] (1.75,1.4) circle (1.5pt);
		
		\draw[bul] (2,0.7) circle (1.5pt);
		\draw[bul] (2,1.4) circle (1.5pt);

		\draw[bul] (2.25,0.7) circle (1.5pt);
		\draw[bul] (2.25,1.4) circle (1.5pt);
		
		\draw[bul] (2.5,0) circle (1.5pt);
		\draw[bul] (2.5,0.7) circle (1.5pt);
		\draw[bul] (2.5,1.4) circle (1.5pt);
		
		\draw[bul] (2.75,0) circle (1.5pt);
		\draw[bul] (2.75,0.7) circle (1.5pt);
		\draw[bul] (2.75,1.4) circle (1.5pt);
		
		\draw[bul] (3,0) circle (1.5pt);
		\draw[bul] (3,0.7) circle (1.5pt);
		\draw[bul] (3,1.4) circle (1.5pt);
		
		\draw[bul] (3.25,0) circle (1.5pt);
		\draw[bul] (3.25,0.7) circle (1.5pt);
		\draw[bul] (3.25,1.4) circle (1.5pt);
		
		\draw[bul] (3.55,0) circle (0.4pt);
		\draw[bul] (3.75,0) circle (0.4pt);
		\draw[bul] (3.95,0) circle (0.4pt);
		
		\draw[bul] (3.55,0.7) circle (0.4pt);
		\draw[bul] (3.75,0.7) circle (0.4pt);
		\draw[bul] (3.95,0.7) circle (0.4pt);
		
		\draw[bul] (3.55,1.4) circle (0.4pt);
		\draw[bul] (3.75,1.4) circle (0.4pt);
		\draw[bul] (3.95,1.4) circle (0.4pt);
		
		\draw[bul] (4.25,0) circle (1.5pt);
		\draw[bul] (4.25,0.7) circle (1.5pt);
		\draw[bul] (4.25,1.4) circle (1.5pt);
		
		\draw[bul] (5.8,2.1) circle (1.5pt);
		\draw[bul] (6.8,0) circle (1.5pt);
		\draw (5.8,2.1)--(6.8,0);
		\draw[bul] (7.3,2.1) circle (1.5pt);
		\draw[bul] (8.3,0) circle (1.5pt);
		\draw (7.3,2.1)--(8.3,0);
		\draw[bul] (9.05,2.1) circle (1.5pt);
		\draw[bul] (9.05,0) circle (1.5pt);
		\draw (9.05,2.1)--(9.05,0);
		
		\draw[bul] (7.55,2.1) circle (1.5pt);
		\draw[bul] (7.8,2.1) circle (1.5pt);
		\draw[bul] (8.05,2.1) circle (1.5pt);
		\draw[bul] (8.3,2.1) circle (1.5pt);
		\draw[bul] (8.55,2.1) circle (1.5pt);
		\draw[bul] (8.8,2.1) circle (1.5pt);
		
		\draw[bul] (8.55,0) circle (1.5pt);
		\draw[bul] (8.8,0) circle (1.5pt);
		
		\draw[bul] (10.05,2.1) circle (1.5pt);
		\draw[bul] 	(10.05,0) circle (1.5pt);
		\draw (10.05,2.1)--(10.05,0);
		
		\draw[bul] (6.35,2.1) circle (0.4pt);
		\draw[bul] (6.55,2.1) circle (0.4pt);
		\draw[bul] (6.75,2.1) circle (0.4pt);

		\draw[bul] (7.35,0) circle (0.4pt);
		\draw[bul] (7.55,0) circle (0.4pt);
		\draw[bul] (7.75,0) circle (0.4pt);
		
		\draw[bul] (9.4,0) circle (0.4pt);
		\draw[bul] (9.6,0) circle (0.4pt);
		\draw[bul] (9.8,0) circle (0.4pt);
		\draw[bul] (9.4,2.1) circle (0.4pt);
		\draw[bul] (9.6,2.1) circle (0.4pt);
		\draw[bul] (9.8,2.1) circle (0.4pt);

		\node [anchor=south] at (5.8,2.1) {$1$};
		\node [anchor=south] at (7.3,2.1) {$i$};
		\node [anchor=south] at (9.05,2.1) {$i+7$};
		\node [anchor=south] at (10.05,2.1) {$n$};
		\node [anchor=north] at (6.8,0) {$5$};
		\node [anchor=north] at (8.3,0) {$i+4$};
		
		

		\node [anchor=west] at (10.4,1.1) {$\overline{v}_{i+6}\overline{u}_i\overline{u}_{i+2}$};

		\node [anchor=south] at (0,2.1) {$1$};
		\node [anchor=south] at (0.5,0.7) {$3$};
		\node [anchor=south] at (1.5,1.4) {$i$};
		\node [anchor=south] at (3.25,2.1) {$i+7$};
		\node [anchor=west] at (4.75,1.0) {$=$};
		\node [anchor=south] at (4.25,2.1) {$n$};
		
		\draw (0,1.4)--(0.5,0.7) (0,0.7)--(0.5,0) (1.5,1.4)--(2.5,0) (2.5,1.4)--(2.5,0.7) (2.75,1.4)--(2.75,0.7) (3,1.4)--(3,0.7) (3.25,1.4)--(3.25,0.7) (3,0.7)--(3,0) (3.25,0.7)--(3.25,0) (4.25,1.4)--(4.25,0) ; 
		\end{tikzpicture} 
	\end{center} 
	\caption{ $\overline{v}_{i+6}\overline{u}_i\overline{u}_{i+2}$.} \label{Fig 2} 
	\begin{center}
		By Figure \ref{Fig 1} and \ref{Fig 2}, we have that $\overline{u}_i\overline{u}_{i+3}= \overline{v}_{i+6}\overline{u}_i\overline{u}_{i+2}$. 
	\end{center}	
\end{figure} 
 Next, we will verify  consequences of $R$, which are important by technical reasons.
\begin{lem} \label{i-ii} 
	(i) For  $w=u_{i_0}u_{i_1}...u_{i_l}x_{j_1}...x_{j_m}x_{j_{m+1}}\in W_t$ with $j_{m+1}=2l-2m$, we have\\  $w\approx v_1u_{i_0}u_{i_1}...u_{i_l}x_{j_1}...x_{j_m}$.  \\
\indent \quad \quad \ \ \ \	(ii) For  $w=u_{i_0}u_{i_1}...u_{i_l}x_{j_1}...x_{j_m}x_{j_{m+1}}\in W_t$ with $i_0=2m-2l$, we have \\ $w\approx v_{i_0+3}u_{i_1}...u_{i_l}x_{j_1}...x_{j_m}x_{j_{m+1}}$.
\end{lem}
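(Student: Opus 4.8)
The plan is to derive both identities as consequences of $R$, obtaining part (ii) from part (i) by a duality.

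Consider the anti-automorphism $\phi$ of the free monoid $X_n^*$ determined on generators by $\phi(u_i)=x_i$, $\phi(x_i)=u_i$, $\phi(v_i)=v_i$ and extended through $\phi(ab)=\phi(b)\phi(a)$. Inspecting the list $(E)$, \ref{l2}--\ref{l7}, \ref{a}--\ref{yy} one checks that $\phi$ maps every relation of $R$ to another relation of $R$, up to a reordering of the pairwise commuting $v$-letters (itself a consequence of \ref{b}); hence $\phi(\rho_R)=\rho_R$. Moreover $\phi$ carries the word $w=u_{i_0}\cdots u_{i_l}x_{j_1}\cdots x_{j_m}x_{j_{m+1}}$ to $u_{j_{m+1}}u_{j_m}\cdots u_{j_1}x_{i_l}\cdots x_{i_1}x_{i_0}$, which again belongs to $W_t$ (its four defining conditions are invariant under reversing both index sequences and interchanging $u$ with $x$), with the roles of the parameters $l$ and $m$ exchanged; under this correspondence the hypothesis $j_{m+1}=2l-2m$ of (i) turns into the hypothesis of (ii). Applying $\phi$ to the equation of (i) and then moving the resulting trailing $v_1$ to the front — pushing it leftwards through the $x$'s and then the remaining $u$'s by repeated use of \ref{vv}, and noting that the only configuration in which this transport can instead absorb the moving letter (by \ref{rr}) is one in which all of those $x$-indices are consecutive by two, in which case the leading $v$-letter on the right-hand side of (ii) is itself absorbed by \ref{uu} — produces exactly the equation of (ii). So it is enough to prove (i).

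For (i), let $w=u_{i_0}\cdots u_{i_l}x_{j_1}\cdots x_{j_m}x_{j_{m+1}}$ with $j_{m+1}=2l-2m$, which forces $l>m$. The proof distinguishes cases on $i_0$ and proceeds by induction on the length of $w$. When $i_0=1$ the exponent equals $i_0+2l-2m-1$, so relation \ref{k} applies and gives $w\approx v_1v_2v_3v_4\,u_{i_1}\cdots u_{i_l}x_{j_1}\cdots x_{j_m}$; rewriting $v_1v_2v_3v_4\approx u_1^2$ by \ref{dd} and then using \ref{f} and \ref{l4} to split off one factor $u_1$ and \ref{uu} to delete the surplus $v_2$ transforms this into $v_1\,u_1u_{i_1}\cdots u_{i_l}x_{j_1}\cdots x_{j_m}$, as asserted. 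When $i_0\ge 2$ one first rewrites the trailing block $x_{j_m}x_{j_{m+1}}$ using the $x$-version of \ref{c}, \ref{h}, \ref{l6} or \ref{l7}, according as $j_m-j_{m+1}$ equals $1$, equals $3$, or is at least $4$, so as either to lower $j_{m+1}$ below the threshold $2l-2m$ and then invoke \ref{m}, or to arrive at a shorter word of $W_t$ to which one of \ref{m}, \ref{mm} or the induction hypothesis applies; any surplus $v$-letters produced on the way are transported into position by \ref{xx}--\ref{rr} and collapsed by \ref{a}, \ref{b} to the single $v_1$ of the statement. (In the smallest configurations, where $l-m=1$ with $m\le 1$, or where some index falls in $\{1,2\}$, the generic relations \ref{f}, \ref{c}, \ref{h}, \ref{l6}, \ref{l7} are replaced by the low-index relations \ref{l2}--\ref{l5} and \ref{dd}.)

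I expect the main obstacle to be this bookkeeping. After each index-rewrite one must recheck all four defining conditions of $W_t$ — above all the condition forbidding $k+1$ and $k+3$ among the surviving indices — with the parameters shifted by precisely the right amount so that the induction hypothesis, or the boundary relation \ref{m} or \ref{mm}, becomes available; and one must track the exact index of every $v$-letter that appears, so that the collection of them reduces, via \ref{a}, \ref{b} and the transport relations \ref{xx}--\ref{rr}, to the single letter $v_1$ claimed in (i) — equivalently, after applying $\phi$, to the letter $v_{i_0+3}$ claimed in (ii). The low-index edge cases likewise demand separate, if routine, attention.
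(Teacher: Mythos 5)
There is a genuine gap, and it sits at the heart of part (i). The paper's proof hinges on a single mechanism that your plan never produces: reading \ref{m} from right to left to \emph{insert} a letter $x_{j_{m+1}-1}$ immediately before $x_{j_{m+1}}$ (legitimate because the prefix ending in $x_{j_{m+1}-1}$ lies in $W_t$ with last index $2l-2m-1<2l-2m$), then applying \ref{l6} to the adjacent pair $x_{j_{m+1}-1}x_{j_{m+1}}$ to create $v_{j_{m+1}+3}$ while lowering the last index to $j_{m+1}-3$, transporting that $v$ to the front by \ref{vv} (where the index $j_{m+1}+3=2l-2m+3$ drops by exactly $2(l+1)-2m$ and lands on $1$), and finally deleting the two surplus $x$'s by \ref{m}. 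Your substitute --- rewriting the trailing block $x_{j_m}x_{j_{m+1}}$ by a case analysis on $j_m-j_{m+1}$ --- operates on the wrong pair of letters and, moreover, mis-enumerates the cases: condition (iv) of $W_t$ applied to $k=j_{m+1}$ forces $j_m-j_{m+1}\notin\{1,3\}$, so your cases ``equals $1$'' and ``equals $3$'' are vacuous, while the case $j_m-j_{m+1}=2$, which does occur, is not covered; the case $m=0$, where there is no $x_{j_m}$ at all, is also unaddressed. Nothing in your outline explains where the single letter $v_1$ with exactly the right index comes from.

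Two further points. The split on $i_0$ is unnecessary (the paper's argument is uniform in $i_0$ and touches only the $x$-suffix), and your $i_0=1$ branch does not go through as written: \ref{f} requires $i\geq 3$, and \ref{l4} reads $u_iu_1\approx v_1v_2u_i$ with the $v$'s to the \emph{left} of $u_i$ and $u_1$ to its \emph{right}, so it does not convert $v_1v_2v_3v_4u_{i_1}\cdots$ into $v_1u_1u_{i_1}\cdots$. The duality $\phi$ for part (ii) is morally sound, but your justification is too weak: the $\phi$-image of a relation (e.g.\ of \ref{l4}, which becomes $x_1x_i\approx x_iv_2v_1$) is a consequence of $R$ only after transporting $v$'s through $u$'s and $x$'s via \ref{xx}--\ref{rr}, not merely after reordering commuting $v$'s by \ref{b}; the paper instead proves (ii) directly by the mirror computation with \ref{mm} and \ref{l6}, which is shorter than establishing the invariance of all of $R$ under $\phi$.
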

\begin{proof}
	(i) We have  $u_{i_0}u_{i_1}...u_{i_l}x_{j_1}...x_{j_m}x_{j_{m+1}}\stackrel{\text{\ref{m}}}{\approx} u_{i_0}u_{i_1}...u_{i_l}x_{j_1}...x_{j_m}x_{j_{m+1}-1}x_{j_{m+1}}$. \\
	Suppose   $j_{m+1}=2l-2m\geq 4$.  Then $u_{i_0}u_{i_1}...u_{i_l}x_{j_1}...x_{j_m}x_{j_{m+1}-1}x_{j_{m+1}}\stackrel{\text{\ref{l6}}}{\approx} \\ u_{i_0}u_{i_1}...u_{i_l}x_{j_1}...x_{j_m}v_{{j_{m+1}+3}}x_{j_{m+1}-1}x_{{j_{m+1}-3}} \stackrel{\text{\ref{vv}}}{\approx}  v_1u_{i_0}u_{i_1}...u_{i_l}x_{j_1}...x_{j_m}x_{{j_{m+1}-1}}x_{{j_{m+1}-3}} \\ \stackrel{\text{\ref{m}}}{\approx}  v_1u_{i_0}u_{i_1}...u_{i_l}x_{j_1}...x_{j_m} $. Suppose $j_{m+1}=2l-2m< 4$, i.e. $j_{m+1}=2$. We prove in a similar way that $u_{i_0}u_{i_1}...u_{i_l}x_{j_1}...x_{j_m}x_{j_{m+1}}\approx v_1u_{i_0}u_{i_1}...u_{i_l}x_{j_1}...x_{j_m}$ by using  \ref{l2}	and \ref{vv}-\ref{rr}. \\
	
	\noindent (ii) The proof is similar to (i), by using \ref{mm} and \ref{l6} if $i_0\geq 4$ and  \ref{mm}, \ref{l2}, and \ref{vv}-\ref{rr}  if $i_0=2$.
\end{proof}
	\section{Set of Forms}
In this section, we introduce an algorithm, which transforms any word $w\in\ X_n^*$ to a word in $W_n$ using $R$, with other words, we show that for all $w\in\ X_n^*$, there is $w'\in W_n$ such that $w\approx w'$ is a consequence of $R$. First, the algorithm transforms each $w\in\ X_n^*$ to a "new" word $w'$. All these "new" words will be collected in a set. Later, we show that set belongs to $W_n$.  Let $w\in X_n^*\backslash \{\epsilon\}$.
\begin{itemize}
	\item	Using \ref{a}-\ref{rr}, we move in the front of $\tilde{w}$ and cancel all ${v_j}'s$, for $j\in\{1,...,n\}$, respectively.      We have $w\approx \tilde{v}\tilde{w}$, where $\tilde{v}\in\{v_1,...,v_n\}^*$ and $\tilde{w}\in\{u_1,u_2,...,u_{n-2},x_1,x_2\\ ,...,x_{n-2}\}^*$. 
	\item Moreover, we seperate the ${u_i} 's$ and ${x_i}'s$ for $i\in\{1,...,n-2\}$ by $(E)$ and \ref{a}-\ref{rr}. Then $\tilde{v}\tilde{w} \approx \overline{v}\overline{B}\overline{C}$, where $\overline{v}\in\{v_1,...,v_n\}^*$, $\overline{B}\in\{u_1,u_2,...,u_{n-2}\}^*$, and $\overline{C}\in\{x_1,x_2,...,x_{n-2}\}^*$. 
	\item By \ref{l2}-\ref{l7} and \ref{a}-\ref{rr}, we get $\overline{v}\overline{B}\overline{C}\approx v'B'C'$, where $v'\in\{v_1,...,v_n\}^*$, $B'\in\{u_1,u_2,...,u_{n-2}\}^*$, and $C'\in\{x_1,x_2,...,x_{n-2}\}^*$ such that the indexes of the variables in the word $B'$ are ascending and in the word $C'$ are descending (reading from the left to the right).
	\item By \ref{l2},  \ref{dd}-\ref{h}, and \ref{a}-\ref{rr}, we replace subwords of $B'C'$ of the form $x_{i+3}x_{i}, x_{i+1}x_i, \\ x_i^2, u_i^2, u_iu_{i+3},$ and $u_iu_{i+1}$ until $v'B'C'\approx v''w_1...w_p$ with $v''\in\{v_1,...,v_n\}^*$ and ${w_1},...,{w}_{p} \in W_x^{-1}\cup W_u$  such that 
	\begin{multline} \label{(1)}
	\mbox{if} \ u_i\in var(w_1...w_p)\ (x_i\in var(w_1...w_p)) \ \mbox{then}  \ u_{i+1}, u_{i+3} \notin var(w_1...w_p)\  (x_{i+1}, x_{i+3} \notin \\ \quad var(w_1...w_p)) \ \mbox{for all} \ i\in\{1,...,n-2\} \ \mbox{and each variable in} \ w_1...w_p \ \mbox{is unique}.  \hfill
	\end{multline}
	Note that this    is possible since each of the relations \ref{l2},  \ref{dd}-\ref{h}, and \ref{a}-\ref{rr} do not increase the index of any variable in  $\{u_1,u_2,...,u_{n-2},x_1,x_2,...,x_{n-2}\}$ in the "new" word.  
	\item  Using \ref{j}-\ref{mm},  Lemma \ref{i-ii}, and \ref{a}-\ref{rr}, we remove variables $x_i$ and $u_i$, respectively,  until one can not more remove a variable $x_i$ or $u_i$. We obtain $v''w_1...w_p\approx {v'''}{w'_1}...{w'}_{{p'}}$, where $v'''\in\{v_1,...,v_n\}^*$ and ${w'_1},...,{w'}_{p'} \in W_x^{-1}\cup W_u$. Note that is possible since each of the relations  \ref{j}-\ref{mm} as well as  Lemma \ref{i-ii} only remove variables (and add variables in $\{v_1,...,v_n\}$, respectively).
	\item We decrease the indexes of the variable in $\{u_1,u_2,...,u_{n-2},x_1,x_2,...,x_{n-2}\}$ (if possible) by \ref{w}-\ref{yy} as well as \ref{a}-\ref{rr} and obtain ${v'''}{w'_1}...{w'}_{{p'}}\approx v^*B^*C^*$ with $v^*\in\{v_1,...,v_n\}^*$, ${B}^*\in\{u_1,u_2,...,u_{n-2}\}^*$, and ${C}^*\in\{x_1,x_2,...,x_{n-2}\}^*$. Note that the indexes of the variables in $B^*$ (in $C^*$) are ascending (are descending).  
\end{itemize}
\noindent We repeat all steps. The procedure terminates if in all steps, the word will not more changed. We obtain  $v^*B^*C^*\approx v_A\hat{w_1}...\hat{w}_{\hat{p}}$, where $\hat{w_1},...,\hat{w}_{\hat{p}}\in W_x^{-1}\cup W_u$ and  $A\subseteq \overline{n}$ such that no $v_j$ ($j\in A$) can be canceled by using \ref{a}-\ref{rr}. This case has to happen since in each step the number of the variables from $\{u_1,u_2,...,u_{n-2},x_1,x_2,...,x_{n-2},v_1,...,v_n\}$ decreases or is kept and the indexes of the ${u_i}'s$ and ${x_i}'s$ decrease or are kept. \\

\noindent We give the set of all words, which we obtain from words in $w\in\ X_n^*$ by that  algorithm, the name $P$. \\

 By (\ref{(1)}), we obtain immediately from algorithm:

\begin{remark} \label{two}
	Let $\hat{w}=v_A\hat{w}_1...\hat{w}_{m}\in P$ and let $1\leq k<k'\leq m$. \\
	If $\hat{w}_k,\hat{w}_{k'}\in W_u$ then $i_k+2\lvert \hat{w}_k \rvert +2\leq i_{k'}$. \\         
	If $\hat{w}_k,\hat{w}_{k'}\in W_x$ then $i_{k'}+2\lvert \hat{w}_{k'} \rvert +2\leq i_k$.
\end{remark}

Let fix a word $\hat{w}=v_A\hat{w}_1...\hat{w}_{m}\in P$. There are $a,b\in\{0,...,n\}$ with $a+b=m$, $t_1,...,t_{a+b}\in\{1,...,m\}$, $w_{t_1},...,w_{t_a}\in W_u$ and $w_{t_{a+1}},...,w_{t_{a+b}}\in W_x$, such that $\hat{w}=v_A\hat{w}_1...\hat{w}_{m}=v_Aw_{t_1}...w_{t_a}w_{t+1}^{-1}\\...w_{t_{a+b}}^{-1}$, where $\{w_{t_1},...,w_{t_a}\}=\emptyset$ or $\{w_{t_{a+1}},...,w_{t_{a+b}}\}=\emptyset$ (i.e. $a=0$ or $b=0$) is possible. We  observe that    $\{\hat{w}_1,...,\hat{w}_m\}=\{w_{t_1},...,w_{t_a},w_{t+1}^{-1},...,w_{t_{a+b}}^{-1}\}$ and $\{t_1,...,t_a,t_{a+1},...,t_{a+b}\}=\{1,...,m\}$. We define an order on $\{t_1,...,t_a,t_{a+1},...,t_{a+b}\}$  by $t_1<\cdot\cdot\cdot<t_a$ and $t_{a+b}<\cdot\cdot\cdot<t_{a+1}$. If $a,b\geq 1$, the order between $t_1,...,t_a$ and $t_{a+1},...,t_{a+b}$ is given by the following rule:	Let $k\in\{1,...,a\}$ and $l\in\{1,...,b\}$. 
\begin{center}
	If $i_{t_k}+2\lvert w_{t_k} \rvert-2+2\lvert w_{t_{k+1}}...w_{t_{a}}\rvert-2\lvert w_{t_{a+1}}^{-1}...w_{t_{a+l-1}}^{-1}\rvert< i_{t_{a+l}}+2\lvert w_{t_{a+l}}^{-1}\rvert-2$ then $t_k<t_{a+l}$
\end{center} 
and if $i_{t_k}+2\lvert w_{t_k} \rvert-2+2\lvert w_{t_{k+1}}...w_{t_{a}}\rvert-2\lvert w_{t_{a+1}}^{-1}...w_{t_{a+l-1}}^{-1}\rvert> i_{t_{a+l}}+2\lvert w_{t_{a+l}}^{-1}\rvert-2$ then $t_k>t_{a+l}$. \\

\noindent  The case $i_{t_k}+2\lvert w_{t_k} \rvert-2+2\lvert w_{t_{k+1}}...w_{t_{a}}\rvert-2\lvert w_{t_{a+1}}^{-1}...w_{t_{a+l-1}}^{-1}\rvert = i_{t_{a+l}}+2\lvert w_{t_{a+l}}^{-1}\rvert-2$ is not possible, since otherwise we can cancel $u_{i_{t_k}+2\lvert w_{t_k} \rvert-2}$ and $x_{i_{t_{a+l}}+2\lvert w_{t_{a+l}}^{-1}\rvert-2}$ in $\hat{w}$ by \ref{j}. \\

Our next aim is to describe the relationships between $k_u, (k+1)_u$ and $k_x,(k+1)_x$  for all $k\in\{1,...,m-1\}$ for the word $w=w_1...w_m$. 
\begin{lem} \label{k<k+1} For all $k\in\{1,...,m-1\}$, we have $k_u<(k+1)_u$ and $k_x<(k+1)_x$. \end{lem}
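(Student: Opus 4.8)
The plan is to establish both inequalities $k_u<(k+1)_u$ and $k_x<(k+1)_x$ simultaneously, by unwinding the definitions of the indicators $k_u,k_x$ given just before $Q_0$ and combining them with the structural information about words in $P$ recorded in Remark~\ref{two}. Recall that for a word $w=w_1\dots w_m$ with $w_1,\dots,w_m\in W_x\cup W_u$, the value $k_u$ equals $i_k$ when $w_k\in W_u$ and equals $i_k+2|w_k|-2|W_u^k|+2|W_x^k|$ when $w_k\in W_x$ (and dually for $k_x$), where $W_u^k$, $W_x^k$ count the $u$- and $x$-content of the suffix $w_{k+1}\dots w_m$. The key point is that passing from $k$ to $k+1$ changes the suffix from $w_{k+1}\dots w_m$ to $w_{k+2}\dots w_m$, i.e. it removes exactly the block $w_{k+1}$; so $|W_u^{k}|-|W_u^{k+1}|$ and $|W_x^{k}|-|W_x^{k+1}|$ are controlled by whether $w_{k+1}\in W_u$ or $W_x$ and by $|w_{k+1}|$.

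First I would split into the four cases according to the types of $w_k$ and $w_{k+1}$. If $w_k,w_{k+1}\in W_u$: then $k_u=i_k$ and $(k+1)_u=i_{k+1}$, and Remark~\ref{two} gives $i_k+2|w_k|+2\le i_{k+1}$, so $k_u<(k+1)_u$ with room to spare; for the $x$-indicator, $k_x=i_k+2|w_k|+2|W_x^k|-2|W_u^k|$ and $(k+1)_x=i_{k+1}+2|w_{k+1}|+2|W_x^{k+1}|-2|W_u^{k+1}|$, and since $w_{k+1}\in W_u$ we have $|W_x^k|=|W_x^{k+1}|$ and $|W_u^k|=|W_u^{k+1}|+|w_{k+1}|$, whence $(k+1)_x-k_x=(i_{k+1}-i_k)+2|w_{k+1}|-2|w_k|+2|w_{k+1}|$; using $i_{k+1}-i_k\ge 2|w_k|+2$ this is $\ge 4|w_{k+1}|+2>0$. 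The case $w_k,w_{k+1}\in W_x$ is the exact mirror image, using the second half of Remark~\ref{two}. The two mixed cases $w_k\in W_u,\ w_{k+1}\in W_x$ and $w_k\in W_x,\ w_{k+1}\in W_u$ are handled the same way: one of $k_u,(k+1)_u$ (resp. $k_x,(k+1)_x$) is a bare $i$-value and the other carries the correction term $2|w|+2|W_x|-2|W_u|$ or $2|w|-2|W_u|+2|W_x|$, and one substitutes $|W_u^{k+1}|=|W_u^k|$ or $|W_u^{k+1}|=|W_u^k|-|w_{k+1}|$ (and similarly for $W_x$) to turn the claimed inequality into a statement about $i_{k+1}-i_k$ together with the block lengths $|w_k|,|w_{k+1}|\ge 1$. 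In the mixed cases there is no direct constraint from Remark~\ref{two} on $i_{k+1}-i_k$, so one must instead invoke the non-cancellation observation recorded right after the ordering rule (the case of equality in the ordering rule "is not possible, since otherwise we can cancel \dots by~\ref{j}"): this is exactly what forbids the degenerate coincidence and forces the strict inequality.

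The step I expect to be the main obstacle is precisely the bookkeeping in the two mixed cases, where the adjacency $w_kw_{k+1}$ mixes a $u$-block with an $x$-block: here Remark~\ref{two} says nothing, and one has to trace carefully through the ordering rule on $\{t_1,\dots,t_a,t_{a+1},\dots,t_{a+b}\}$ defined above — the inequality $i_{t_k}+2|w_{t_k}|-2+2|w_{t_{k+1}}\dots w_{t_a}|-2|w_{t_{a+1}}^{-1}\dots w_{t_{a+l-1}}^{-1}|$ versus $i_{t_{a+l}}+2|w_{t_{a+l}}^{-1}|-2$ — to see that consecutive indices $k$ and $k+1$ in the word-order correspond to adjacent values in this comparison, and that strictness there (guaranteed by the "not possible" remark) is exactly $k_u<(k+1)_u$ and $k_x<(k+1)_x$ after translating the comparison into the indicator notation. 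Once that translation is set up, the inequalities fall out; I would present the $u,u$ and $x,x$ cases as the easy warm-up using Remark~\ref{two}, then treat the mixed cases as the substantive part, citing the post-ordering-rule remark and relation~\ref{j} for strictness.
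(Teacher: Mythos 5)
Your proposal is correct and follows essentially the same route as the paper: the same four-case split on the types of $w_k,w_{k+1}$, Remark~\ref{two} for the two homogeneous cases, and, for the two mixed cases, the strict inequality built into the ordering rule on $\{t_1,\dots,t_{a+b}\}$ together with the observation that equality is impossible because of \ref{j}. The only blemish is a sign slip in your expression for $k_x$ when $w_k\in W_u$ (the paper's definition has $+2\lvert W_u^k\rvert-2\lvert W_x^k\rvert$, not the reverse), but the difference $(k+1)_x-k_x$ comes out strictly positive under either convention, so nothing breaks.
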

\begin{proof} Let $k\in\{1,...,m-1\}$. 
	Suppose $w_k, w_{k+1}\in W_u$. We obtain $k_u<(k+1)_u$ and $k_x=i_k+2\lvert w_k \rvert +2\lvert W_u^k\rvert - 2\lvert W_x^k \rvert$, $(k+1)_x=i_{k+1}+2\lvert w_{k+1} \rvert +2\lvert W_u^{k+1}\rvert - 2\lvert W_x^{k+1}\rvert$. By Remark \ref{two}, we have $i_k+2\lvert w_k \rvert +2\leq i_{k+1}$. This gives, $i_k+2\lvert w_k \rvert +2\lvert W_u^k\rvert - 2\lvert W_x^k \rvert < i_{k+1}+ 2\lvert W_u^k\rvert - 2\lvert W_x^k \rvert= i_{k+1}+2\lvert w_{k+1}\rvert+ 2\lvert W_u^{k+1}\rvert - 2\lvert W_x^{k+1} \rvert$ (since $w_{k+1}\in W_u$ implies $2\lvert W_x^{k}\rvert=2\lvert W_x^{k+1}\rvert$ ). Then $k_x<(k+1)_x$. For the case $w_k,w_{k+1}\in W_x$, we can show that $k_u<(k+1)_u$ and $k_x<(k+1)_x$ in a similar way. \\   
	Suppose $w_k\in W_u$ and $w_{k+1}\in W_x$. 
	First, we will show $k_u<(k+1)_u$. We have $k_u=i_k$ and $(k+1)_u=i_{k+1}+2\lvert w_{k+1}\rvert+2\lvert W_x^{k+1}\rvert -2\lvert W_u^{k+1}\rvert$. Since $k\in\{t_1,...,t_a\}$ and $k+1\in\{t_{a+1},...,t_{a+b}\}$, we obtain $i_{k}+2\lvert w_{k}\rvert-2+2\lvert W_u^{k}\rvert -2\lvert W_x^{k+1}\rvert < i_{k+1}+2\lvert w_{k+1}\rvert-2$. Then $i_k<i_k+2\lvert w_k\rvert < i_{k+1}+2\lvert w_{k+1}\rvert+2\lvert W_x^{k+1}\rvert -2\lvert W_u^{k+1}\rvert$ (since $w_{k+1}\in W_x$ implies $\lvert W_u^{k}\rvert=\lvert W_u^{k+1}\rvert$ ). Then $k_u<(k+1)_u$. Moreover, we prove $k_x<(k+1)_x$ similarly. The case  $w_k\in W_x$ and $w_{k+1}\in W_u$ can be shown in a similar way as above.
\end{proof}
Of course, the next goal should  be the proof of  $w=w_1...w_{m}\in Q_0$, i.e. we will show that $w$ satisfies \ref{1W}-\ref{4W}. 
\begin{lem} \label{w in Q}
	We have $w=w_1...w_{m}\in Q_0$.
\end{lem}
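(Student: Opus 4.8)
The plan is to verify the four defining conditions $(1_q)$--$(4_q)$ of $Q_0$ for $w = w_1\cdots w_m$, treating them in pairs according to which of the four types of consecutive letters $(w_k,w_{k+1}) \in \{W_u,W_x\}^2$ occur. The ingredients already available are Remark~\ref{two} (which is exactly $(1_q)$ and $(2_q)$, so those two conditions require no further work), Lemma~\ref{k<k+1} (giving $k_u<(k+1)_u$ and $k_x<(k+1)_x$), and the ordering rules on the indices $t_1,\dots,t_{a+b}$ together with the remark that the "equality case'' in that ordering is impossible by \ref{j}. So the real content is conditions $(3_q)$ and $(4_q)$, which assert the sharper gaps $i_k+2j_k+2\le (k+1)_u$ and $(k+1)_x-k_x\ge 2$ when $w_k\in W_u$, and symmetrically $i_k+2j_k+2\le(k+1)_x$ and $(k+1)_u-k_u\ge 2$ when $w_k\in W_x$.

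First I would dispatch $(1_q)$ and $(2_q)$: since $w_k,w_l\in W_x$ with $k<l$ forces $i_l+2|w_l|+2\le i_k$ by Remark~\ref{two}, and in particular $i_k+2j_k+1<i_l$ reading the $W_x$-indices the way $Q_0$ does; likewise for $W_u$. (One has to be careful that $Q_0$'s phrasing of $(1_q)$/$(2_q)$ matches the orientation of Remark~\ref{two}; this is just bookkeeping about which of the two words has the larger leading index.) Then, for $(3_q)$ and $(4_q)$, I would split into the case $w_k,w_{k+1}$ both in $W_u$ (resp.\ both in $W_x$), and the mixed case. In the "both $W_u$'' case, $(k+1)_u=i_{k+1}$ and the gap $i_k+2j_k+2\le i_{k+1}=(k+1)_u$ is again immediate from Remark~\ref{two}; the companion inequality $(k+1)_x-k_x\ge 2$ was in fact already established inside the proof of Lemma~\ref{k<k+1} (the displayed chain there shows $(k+1)_x-k_x\ge 2$, not merely $>0$), so I would just invoke that computation. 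The "both $W_x$'' case is symmetric.

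The substantive case is $w_k\in W_u$, $w_{k+1}\in W_x$ (and its mirror). Here $k\in\{t_1,\dots,t_a\}$, $k+1\in\{t_{a+1},\dots,t_{a+b}\}$, so by the ordering rule the strict inequality
$i_k+2|w_k|-2+2|W_u^{k}|-2|W_x^{k+1}| < i_{k+1}+2|w_{k+1}|-2$
holds (the equality case being excluded by \ref{j}). Since indices are integers this upgrades to a gap of at least $1$, and one then has to convert it into the $+2$ gap demanded by $(3_q)$/$(4_q)$ by feeding in the parity/step structure of Proposition~\ref{4 choice}(iii)--(iv): the indices appearing in $u$- and $x$-blocks that survive the reduction of Section~3 cannot be adjacent in the relevant sense, because any such adjacency would have been removed by \ref{c}, \ref{f}, \ref{h} or by property~\eqref{(1)} of the words in $P$. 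I would make this explicit: property~\eqref{(1)} says that if $u_i$ occurs in $\hat w_1\cdots\hat w_m$ then $u_{i+1},u_{i+3}$ do not, and each variable is unique; combining this with Lemma~\ref{k<k+1} and the integrality of the ordering inequality yields $(k+1)_u-k_u\ge 2$ and $i_k+2j_k+2\le(k+1)_x$. Then $k_x<(k+1)_x$ from Lemma~\ref{k<k+1} is boosted to $(k+1)_x-k_x\ge2$ by the same \eqref{(1)} argument applied on the $x$-side.

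The main obstacle I anticipate is precisely this last step: translating the strict integer inequality coming from the $t$-ordering into the explicit "$\ge 2$'' bounds, because the definitions of $k_u,k_x$ carry the correction terms $\pm 2|W_u^k|\mp 2|W_x^k|$ and one must track how $|W_u^k|,|W_x^k|$ change as $k$ increments through a $W_u$-to-$W_x$ transition. I would handle it by isolating a short sublemma: for a $W_u$-letter $w_k$ followed by a $W_x$-letter, $|W_u^k|=|W_u^{k+1}|$ and $|W_x^{k+1}|=|W_x^k|$ (no $x$ is lost at position $k$, no $u$ is lost at position $k+1$), so the correction terms telescope and the raw ordering inequality becomes exactly $i_k+2j_k+2\le(k+1)_x$ after using that no two surviving indices differ by $1$ or $3$. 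With that sublemma in hand the four conditions fall out, and $w\in Q_0$ follows.
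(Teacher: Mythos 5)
Your overall architecture matches the paper's: conditions \ref{1W} and \ref{2W} come from Remark~\ref{two}, the same-type cases of \ref{3W}/\ref{4W} come from Remark~\ref{two} plus the computation already carried out inside Lemma~\ref{k<k+1}, and the mixed case starts from the strict ordering inequality on $\{t_1,\dots,t_{a+b}\}$, which integrality upgrades to a gap of at least $1$. The genuine gap in your argument is the final upgrade from $\geq 1$ to $\geq 2$ in the mixed case. You attribute it to property~(\ref{(1)}) together with relations \ref{c}, \ref{f}, \ref{h} (``no two surviving indices differ by $1$ or $3$'') and to Proposition~\ref{4 choice}(iii)--(iv). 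None of these applies here: property~(\ref{(1)}) and the relations \ref{dd}--\ref{h} only constrain the distance between two $u$-indices or between two $x$-indices, whereas \ref{3W}/\ref{4W} in the mixed case constrain the offset between a $u$-block and an $x$-block after the correction terms $\pm 2\lvert W_u^k\rvert \mp 2\lvert W_x^k\rvert$ are applied --- a quantity about which those relations say nothing. Moreover, Proposition~\ref{4 choice} is a statement about transformations, not about the word $w$; invoking it here would presuppose the correspondence between $w$ and a transformation that is only established later (Lemmas \ref{16} and \ref{17}).

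The paper's actual mechanism is different and is the missing idea: if the gap were exactly $1$, i.e.\ $i_k+2\lvert w_k\rvert-2+2\lvert W_u^{k+1}\rvert-2\lvert W_x^{k+1}\rvert+1=i_{k+1}+2\lvert w_{k+1}\rvert-2$, then the pair $u_{i_k+2\lvert w_k\rvert-2}$, $x_{i_{k+1}+2\lvert w_{k+1}\rvert-2}$ sits in a configuration covered by relation \ref{l} (the $W_t$-relation with $j_{m+1}=i_0+2l-2m+1$), so these two letters could be cancelled in $\hat{w}$ --- contradicting $\hat{w}\in P$, since the algorithm terminates only when no further cancellation is available. The same device excludes $(k+1)_x-k_x=1$. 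In other words, the ``$+1$'' configurations are precisely the configurations to which the mixed $u$--$x$ cancellation relations \ref{j}--\ref{l} apply, and membership in $P$ rules them out; your proposal never invokes these relations for this purpose, so as written the mixed case of \ref{3W}/\ref{4W} is not established. The telescoping observation you isolate as a sublemma ($\lvert W_u^k\rvert=\lvert W_u^{k+1}\rvert$ when $w_{k+1}\in W_x$) is correct and is indeed used by the paper, but it only delivers the $\geq 1$ bound, not the $\geq 2$ bound.
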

\begin{proof}
	Exactly, $w$ satisfies \ref{1W} and \ref{2W}. These are trivially checked by Remark \ref{two}. \\
	Let $k\in\{1,...,m-1\}$ and let $w_k\in W_u, w_{k+1}\in W_x $. This provides $k\in\{t_1,...,t_a\}, k+1\in\{t_{a+1},...,t_{a+b}\}$. We have $i_k+2\lvert w_{k}\rvert-2+2\lvert W_u^{k}\rvert -2\lvert W_x^{k+1}\rvert< i_{k+1}+2\lvert w_{k+1}\rvert-2$. Since $w_{k+1}\in W_x$, we have $2\lvert W_u^{k}\rvert=2\lvert W_u^{k+1}\rvert$. So $i_k+2\lvert w_{k}\rvert-2+2\lvert W_u^{k+1}\rvert -2\lvert W_x^{k+1}\rvert< i_{k+1}+2\lvert w_{k+1}\rvert-2$. We observe that $i_k+2\lvert w_{k}\rvert-2+2\lvert W_u^{k+1}\rvert -2\lvert W_x^{k+1}\rvert+1 \leq i_{k+1}+2\lvert w_{k+1}\rvert-2$. If $i_k+2\lvert w_{k}\rvert-2+2\lvert W_u^{k+1}\rvert -2\lvert W_x^{k+1}\rvert+1= i_{k+1}+2\lvert w_{k+1}\rvert-2$, we can cancel $u_{i_k+2\lvert w_{k}\rvert-2} , x_{i_{k+1}+2\lvert w_{k+1}\rvert-2}$ by \ref{l} in $\hat{w}$. This contradicts $\hat{w}\in P$. Then  $i_k+2\lvert w_{k}\rvert-2+2\lvert W_u^{k+1}\rvert -2\lvert W_x^{k+1}\rvert+2 \leq i_{k+1}+2\lvert w_{k+1}\rvert-2$, i.e.  $i_k+2\lvert w_{k}\rvert+2 \leq i_{k+1}+2\lvert w_{k+1}\rvert-2\lvert W_u^{k+1}\rvert +2\lvert W_x^{k+1}\rvert=(k+1)_u$. Next, to show that $(k+1)_x-k_x\geq 2$. Lemma \ref{k<k+1} gives $(k+1)_x-k_x\geq1$. If $(k+1)_x-k_x=1$ then $i_{k+1}-i_k-2\lvert w_{k}\rvert-2\lvert W_u^{k}\rvert +2\lvert W_x^{k}\rvert=1$. This implies  $i_{k+1}+2\lvert w_{k+1}\rvert-2=i_k+2\lvert w_{k}\rvert-2+2\lvert W_u^{k}\rvert -2\lvert W_x^{k+1}\rvert+1$ since $2\lvert W_x^{k}\rvert=2\lvert W_{k+1}\rvert+2\lvert W_x^{k+1}\rvert$. We can cancel $u_{i_k+2\lvert w_{k}\rvert-2} , x_{i_{k+1}+2\lvert w_{k+1}\rvert-2}$ by \ref{l} in $\hat{w}$. This contradicts $\hat{w}\in P$. Thus, $(k+1)_x-k_x\geq2$. In case $w_k,w_{k+1}\in W_u$, by using Remark \ref{two}, we easily get $i_k+2\lvert w_{k}\rvert+2\leq (k+1)_u$. For show $(k+1)_x-k_x\geq 2$, it is routine to calculate directly. Together with Remark \ref{two}, we will get that  $(k+1)_x-k_x\geq 2$. Altogether, $w$ satisfies \ref{3W}. We prove that $w$ satisfies \ref{4W} in a similar way. Therefore, $w\in Q_0$. 
\end{proof} 

We are now in position and have shown $w\in Q_0$. These are leading us to the next step, showing that $A\subseteq A_w$. First, we point out subsets of $\overline{n}$, which do not contains any element of $A$.

\begin{lem} \label{mid}
	Let $q\in\{1,...,a\}$ and  let $\rho\in\{i_{t_q}+1,...,i_{t_q}+2\lvert w_{t_q}\rvert+1\}\cap\overline{n}$. Then $\rho\notin A$. 
\end{lem}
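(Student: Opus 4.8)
The plan is a proof by contradiction that converts an element of $A$ lying in the forbidden range into a cancellation that $\hat w\in P$ forbids. Recall that $\hat w=v_A\hat w_1\cdots\hat w_m\in P$ is a fixed point of the normalisation algorithm; in particular no letter $v_j$ with $j\in A$ can be deleted from $\hat w$ using only \ref{a}--\ref{rr}. So I would suppose, for contradiction, that $\rho\in A$ with $\rho\in\{i_{t_q}+1,\dots,i_{t_q}+2\lvert w_{t_q}\rvert+1\}\cap\overline n$. Writing $i=i_{t_q}$, $j=\lvert w_{t_q}\rvert$ and $k=i+2j-2$, we have $w_{t_q}=u_iu_{i+2}\cdots u_k$ and $i+1\le\rho\le k+3$, and the goal is to exhibit a deletion of $v_\rho$ using only \ref{a}--\ref{rr}.

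First I would move $v_\rho$ to the right end of the prefix $v_A$ by \ref{b}, so that $\hat w\approx v_{A\setminus\{\rho\}}\,v_\rho\,\hat w_1\cdots\hat w_m$ with $v_\rho$ sitting immediately to the left of $\hat w_1$. By the decomposition $\hat w=v_Aw_{t_1}\cdots w_{t_a}w_{t_{a+1}}^{-1}\cdots w_{t_{a+b}}^{-1}$ together with $q\le a$, the blocks preceding $w_{t_q}$ are $w_{t_1},\dots,w_{t_{q-1}}$, all in $W_u$; and by Remark~\ref{two} every $u$-letter $u_{j'}$ occurring in those blocks has $j'\le i-4$. Since $j'\le i-4$ and $\rho\ge i+1$, we get $\rho\ge j'+5$, so \ref{xx} commutes $v_\rho$ past each such letter without changing its index. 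Hence $v_\rho$ arrives immediately to the left of $w_{t_q}$ still carrying the index $\rho$.

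The heart of the argument is to push $v_\rho$ through $w_{t_q}=u_iu_{i+2}\cdots u_k$ and to show it cannot come out the other side. I would prove by induction on $t\in\{0,\dots,j-1\}$ that just before the letter $u_{i+2t}$ one has $\rho\ge i+2t+1$; the base case is the hypothesis $\rho\ge i+1$. At $u_{i+2t}$ we then have $\rho>i+2t$, so \ref{vv} cannot apply, and either $\rho\in\{i+2t+1,i+2t+2,i+2t+3\}$, in which case \ref{uu} deletes $v_\rho$ — the promised contradiction with $\hat w\in P$ — or $\rho\ge i+2t+4$, in which case \ref{xx} commutes $v_\rho$ past $u_{i+2t}$ and the inductive hypothesis is restored at $u_{i+2(t+1)}$. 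If $v_\rho$ were never deleted, after the last letter $u_k$ we would have $\rho\ge k+4$, contradicting $\rho\le k+3$. In every case we reach a contradiction, so $\rho\notin A$.

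The step needing the most care is this last induction. It rests on the arithmetic fact that the ``absorption windows'' $\{j'+1,j'+2,j'+3\}$ attached to the letters $u_{j'}$ with $j'\in\{i,i+2,\dots,k\}$ exactly tile the interval $\{i+1,\dots,k+3\}$, so that a $v_\rho$ entering with $\rho$ in that interval is forced to be absorbed before it can pass $u_k$; and on checking that only the index-preserving relation \ref{xx}, never the index-raising relation \ref{vv}, is invoked throughout the passage (which holds precisely because $\rho\ge i+2t+1>i+2t$ at each stage, and likewise $\rho\ge j'+5$ across the earlier blocks). Once this bookkeeping is in place, the initial rearrangement inside $v_A$ and the harmless passage across the earlier $W_u$-blocks are routine consequences of \ref{b}, \ref{xx} and Remark~\ref{two}.
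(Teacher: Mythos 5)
Your proposal is correct and follows essentially the same route as the paper's proof: transport $v_\rho$ rightward past the earlier $W_u$-blocks via \ref{b}, \ref{xx} and Remark~\ref{two}, then absorb it inside $w_{t_q}=u_{i}u_{i+2}\cdots u_{k}$ using \ref{xx} and \ref{uu}, contradicting the fact that no $v_j$ with $j\in A$ can be cancelled in $\hat w\in P$. The paper parametrises the absorption point explicitly as $\rho=i_{t_q}+h+t$ with $h$ even and $t\in\{2,3\}$, whereas you package the same tiling of $\{i+1,\dots,k+3\}$ by the windows $\{j'+1,j'+2,j'+3\}$ as an induction along the letters of the block; the content is identical.
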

\begin{proof}
	Assume $\rho\in A$. Then  $v_\rho w_{t_1}...w_{t_q}...w_{t_a}w_{t_{a+1}}^{-1}...w_{t_{a+b}}^{-1}\stackrel{\text{\ref{xx}}}{\approx}w_{t_1}...v_\rho w_{t_q}...w_{t_a}w_{t_{a+1}}^{-1}...w_{t_{a+b}}^{-1}$. \\
	If $\rho\in\{i_{t_q}+1,i_{t_q}+2,i_{t_q}+3\}\cap\overline{n}$ then $v_\rho u_{i_{t_q}}\stackrel{\text{\ref{uu}}}{\approx} u_{i_{t_q}}$. \\
	If $\rho =i_{t_q}+h+t$ for some $h\in\{2,4,...,2\lvert w_{t_q}\rvert-2\}$ and $t\in\{2,3\}$ then \\ $w_{t_1}...v_\rho w_{t_q}...w_{t_a}w_{t_{a+1}}^{-1}...w_{t_{a+b}}^{-1} \\ =w_{t_1}...v_\rho u_{i_{t_q}}u_{i_{t_q}+2}...u_{i_{t_q}+2\lvert w_{t_q} \rvert-2}w_{t_{q+1}}...w_{t_a}w_{t_{a+1}}^{-1}...w_{t_{a+b}}^{-1}$ \\
	$\stackrel{\text{\ref{xx}}}{\approx} w_{t_1}... u_{i_{t_q}}...v_{(i_{t_q}+h+t)}u_{i_{t_q}+h}...u_{i_{t_q}+2\lvert w_{t_q} \rvert-2}w_{t_{q+1}}...w_{t_a}w_{t_{a+1}}^{-1}...w_{t_{a+b}}^{-1}$ \\
	$\stackrel{\text{\ref{uu}}}{\approx} w_{t_1}... u_{i_{t_q}}...u_{i_{t_q}+h}...u_{i_{t_q}+2\lvert w_{t_q} \rvert-2}w_{t_{q+1}}...w_{t_a}w_{t_{a+1}}^{-1}...w_{t_{a+b}}^{-1}$,\\
	i.e. we can cancel $v_\rho$ in $\hat{w}$ using \ref{xx} and \ref{uu}, a contradiction. 
\end{proof}

\begin{lem} \label{4} Let $\rho\in A$ and let $q\in\{1,...,a\}$ such that $t_q\neq m$. If $\rho\in\{({t_q})_u+1,...,(t_q+1)_u-1\}$ then $\rho\in\{({t_q})_u+2\lvert w_{t_q}\rvert+2,...,(t_q+1)_u-1\}\subseteq A_w$.
\end{lem}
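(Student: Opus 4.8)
The statement has two parts: first, the containment $\rho\in\{(t_q)_u+2\lvert w_{t_q}\rvert+2,\dots,(t_q+1)_u-1\}$, and second, that this set is a subset of $A_w$. The second part is essentially bookkeeping with the recursive definition of $A_w$, so I would dispatch it first. Indeed, since $w_{t_q}\in W_u$, we have $(t_q)_u=i_{t_q}$, so $(t_q)_u+2\lvert w_{t_q}\rvert+2=i_{t_q}+2\lvert w_{t_q}\rvert+2=i_{t_q}+2j_{t_q}+2$. By clause \ref{6w} in the definition of $A_k$ (applied with $k=t_q$, using $w_{t_q}\in W_u$ and $t_q\neq m$), we have $A_{t_q}=A_{t_q+1}\cup\{i_{t_q}+2j_{t_q}+2,\dots,(t_q+1)_u-1\}$, and hence $\{(t_q)_u+2\lvert w_{t_q}\rvert+2,\dots,(t_q+1)_u-1\}\subseteq A_{t_q}\subseteq A_w$, where the final inclusion follows because $A_w$ is obtained from $A_1$ (hence a fortiori from $A_{t_q}$, as the $A_k$ are nested decreasing in $k$) by clause \ref{7W}, which only adds elements. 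I would phrase this as: "$A_w\supseteq A_1\supseteq\cdots\supseteq A_{t_q}\supseteq\{i_{t_q}+2j_{t_q}+2,\dots,(t_q+1)_u-1\}$ by \ref{6w} and \ref{7W}."

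\textbf{The first part} is the real content. We are given $\rho\in A$ with $\rho\in\{(t_q)_u+1,\dots,(t_q+1)_u-1\}=\{i_{t_q}+1,\dots,(t_q+1)_u-1\}$, and we must rule out $\rho\in\{i_{t_q}+1,\dots,i_{t_q}+2\lvert w_{t_q}\rvert+1\}$. But this is exactly the content of Lemma \ref{mid}: that lemma says precisely that no element of $\{i_{t_q}+1,\dots,i_{t_q}+2\lvert w_{t_q}\rvert+1\}\cap\overline{n}$ lies in $A$. Since $\rho\in A\subseteq\overline{n}$, the intersection with $\overline{n}$ is harmless, and we conclude $\rho\notin\{i_{t_q}+1,\dots,i_{t_q}+2\lvert w_{t_q}\rvert+1\}$. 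Combined with $\rho\geq i_{t_q}+1$, this forces $\rho\geq i_{t_q}+2\lvert w_{t_q}\rvert+2=(t_q)_u+2\lvert w_{t_q}\rvert+2$. Together with the hypothesis $\rho\leq(t_q+1)_u-1$, we get $\rho\in\{(t_q)_u+2\lvert w_{t_q}\rvert+2,\dots,(t_q+1)_u-1\}$, as required.

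\textbf{The main obstacle} is not depth but making sure the index identifications are stated cleanly — in particular that $(t_q)_u=i_{t_q}$ (which holds because $w_{t_q}\in W_u$, by the definition of the indicator $k_u$), and that $2\lvert w_{t_q}\rvert=2j_{t_q}$ (which is the observation $j_k=\lvert w_k\rvert$ recorded just before the definition of the indicators). One must also confirm that the set $\{i_{t_q}+2j_{t_q}+2,\dots,(t_q+1)_u-1\}$ is nonempty exactly when $\rho$ can exist in it — but we do not need to separately establish nonemptiness, since $\rho$ itself witnesses it. Thus the lemma is an immediate corollary of Lemma \ref{mid} together with the unwinding of the definitions \ref{6w} and \ref{7W}; I would write it in three or four lines.
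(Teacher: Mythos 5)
Your proposal is correct and follows essentially the same route as the paper's own proof: the containment $\rho\in\{(t_q)_u+2\lvert w_{t_q}\rvert+2,\dots,(t_q+1)_u-1\}$ is deduced from Lemma \ref{mid}, and the inclusion in $A_w$ from clause \ref{6w} (with \ref{7W} and the nesting of the $A_k$, which the paper leaves implicit). Your write-up merely spells out the index identifications and the monotonicity $A_w\supseteq A_1\supseteq\cdots\supseteq A_{t_q}$ in more detail.
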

\begin{proof}
	We have $({t_q})_u=i_{t_q}$. It is a consequence of Lemma \ref{mid} that  $\rho\in\{i_{t_q}+2\lvert w_{t_q}\rvert+2,...,(t_q+1)_u-1\}$ and by \ref{6w}, we have $\{i_{t_q}+2\lvert w_{t_q}\rvert+2,...,(t_q+1)_u-1\}\subseteq A_w$.
\end{proof}
\begin{lem} \label{5}
	Let $\rho\in A$. If  $t_a=m$ and $\rho\in\{i_m+1,...,n\}$ then $\rho\in\{m_x+2,...,n\}\subseteq A_w$.
\end{lem}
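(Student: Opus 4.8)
The plan is to read off the two indicators $m_u$ and $m_x$ explicitly from the hypothesis $t_a=m$, then to pin down exactly where $\rho$ can sit inside $\{i_m+1,\dots,n\}$ by applying Lemma \ref{mid}, and finally to conclude using the monotonicity of the nested sets $A_k$.

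First I would note that the hypothesis $t_a=m$ means the last block $w_m$ of $w=w_1\dots w_m$ is $w_{t_a}$, hence lies in $W_u$, say $w_m=u_{i_m,j_m}$ with $j_m=\lvert w_m\rvert\geq 1$. Since nothing follows position $m$, the words $W_u^m$ and $W_x^m$ are empty, so the defining formulas for the indicators immediately give $m_u=i_m$ and $m_x=i_m+2\lvert w_m\rvert=i_m+2j_m$. In particular $m_u=i_m<i_m+2\leq m_x$, so we land in the second alternative of \ref{5W}; to obtain $A_m=\{m_x+2,\dots,n\}$ it then remains only to verify the side condition $m_x+2\leq n$.

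Next I would locate $\rho$. Applying Lemma \ref{mid} with $q=a$ (so $t_q=m$ and $\lvert w_{t_q}\rvert=j_m$), every element of $\{i_m+1,\dots,i_m+2j_m+1\}\cap\overline{n}$ lies outside $A$; since $\rho\in A$ and $\rho\in\{i_m+1,\dots,n\}\subseteq\overline{n}$, this forces $\rho\in\{i_m+2j_m+2,\dots,n\}=\{m_x+2,\dots,n\}$. This set is therefore nonempty, giving $m_x+2\leq n$ and hence $A_m=\{m_x+2,\dots,n\}$ by \ref{5W}; and of course $\rho$ itself belongs to it. To finish, I would use that \ref{6w} yields $A_{k+1}\subseteq A_k$ for all $k\in\{1,\dots,m-1\}$ (each $A_k$ being $A_{k+1}$ enlarged by a further set) and \ref{7W} yields $A_1\subseteq A_w$ (in each of its three cases $A_w$ contains $A_1$), so that $\{m_x+2,\dots,n\}=A_m\subseteq A_1\subseteq A_w$ and in particular $\rho\in A_w$.

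I do not expect a genuine obstacle: the whole argument is bookkeeping with the indicator formulas. The only point that needs care is making sure the indicators $m_u,m_x$ are evaluated for the \emph{final} block $w_m$, using the emptiness of $W_u^m$ and $W_x^m$, and recognising that the hypothesis $t_a=m$ is precisely what forces $w_m\in W_u$, so that the branch $k_u=i_k$ of the definition applies at $k=m$; once that is in place everything is forced.
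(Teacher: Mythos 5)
Your proof is correct and follows essentially the same route as the paper's: both apply Lemma \ref{mid} with $q=a$ to exclude $\rho$ from $\{i_m+1,\dots,m_x+1\}$ and then invoke \ref{5W}. You are somewhat more careful than the paper in verifying the side conditions (that $w_m\in W_u$ forces $m_u<m_x$, that $m_x+2\leq n$, and that $A_m\subseteq A_1\subseteq A_w$ via \ref{6w} and \ref{7W}), which the paper leaves implicit.
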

\begin{proof} 
	Assume $\rho\in \{i_{m}+1,...,m_x+1\}$. We have $m_x+1=i_{t_a}+2\lvert w_{t_a} \rvert+1$. Then $\rho\in \{i_{t_a}+1,...,i_{t_a}+2\lvert w_{t_a} \rvert+1\}$. By Lemma \ref{mid}, we have $\rho\notin A$. Therefore, $\rho\in\{m_x+2,...,n\}\subseteq A_w$ by \ref{5W}.
\end{proof}

\begin{lem} \label{x middle}
	Let $\rho\in A$. Then $\rho\neq (t_{a+l})_u+1$ for all $l\in\{1,...,b\}$.
\end{lem}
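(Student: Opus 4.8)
The plan is to argue by contradiction, in exactly the style of Lemmas \ref{mid}, \ref{4} and \ref{5}: I assume $\rho=(t_{a+l})_u+1\in A$ for some $l\in\{1,\dots,b\}$ and produce a cancellation of the letter $v_\rho$ inside $\hat w$, which is impossible since $\hat w\in P$ (no $v_j$ can be removed from a word of $P$).

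The first step is to relocate $v_\rho$. In $\hat w=v_Aw_{t_1}\dots w_{t_a}w_{t_{a+1}}^{-1}\dots w_{t_{a+b}}^{-1}$ I push the letter $v_\rho$ to the right, past the $u$-blocks $w_{t_1},\dots,w_{t_a}$ and then past the $x$-blocks $w_{t_{a+1}}^{-1},\dots,w_{t_{a+l-1}}^{-1}$, using \ref{xx} and \ref{vv} letter by letter (equivalently the block versions \ref{zz}, \ref{w}, \ref{yy}, \ref{x}). Each such commutation leaves the index of the travelling $v$ unchanged or shifts it by $\pm2$, so I keep a running count. If at any point this index falls into the range $\{j+1,j+2,j+3\}$ of a $u$-letter $u_j$ immediately ahead, or into $\{1,2,j+3\}$ of an $x$-letter $x_j$ immediately ahead, then \ref{uu} or \ref{rr} deletes the $v$ outright and we are already done.

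The crux is the second step: showing that otherwise $v_\rho$ reaches the block $w_{t_{a+l}}^{-1}$ carrying exactly the right index. Writing $w_{t_{a+l}}^{-1}=x_kx_{k-2}\dots x_{i_{t_{a+l}}}$ with $k=i_{t_{a+l}}+2\lvert w_{t_{a+l}}\rvert-2$, the claim is that the accumulated index shift turns $\rho=(t_{a+l})_u+1$ into $k+3$, so that $v_\rho$ ends up as the prefix $v_{k+3}x_k$ of $w_{t_{a+l}}^{-1}$; then \ref{rr} gives $v_{k+3}x_k\approx x_k$, the letter $v_\rho$ disappears, and $\hat w\in P$ is contradicted. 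The identity to be checked is $(k+3)-\rho=2\lvert W_u^{t_{a+l}}\rvert-2\lvert W_x^{t_{a+l}}\rvert$, which is precisely how $(t_{a+l})_u$ was defined; the delicate point is matching the \emph{actual} cumulative shift — accumulated while $v_\rho$ passes the blocks preceding $w_{t_{a+l}}^{-1}$ in the $u$-first word $\hat w$ — against this formula, which is expressed through the physical word $w=w_1\dots w_m$. This reconciliation is done the same way as the analogous matching in the proof of Lemma \ref{w in Q}: using the interleaving order of the $t_j$'s, Remark \ref{two}, Lemma \ref{k<k+1}, and the strict inequalities forced by $\hat w\in P$ (in particular that $v_\rho$ cannot be eliminated together with a $u$-letter and an $x$-letter via \ref{j} or \ref{l}). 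Should $v_\rho$ instead be absorbed into the interior of $w_{t_{a+l}}^{-1}$, one applies \ref{rr} to the appropriate interior $x$-letter; either way a contradiction results, and since $l$ was arbitrary this gives $\rho\neq(t_{a+l})_u+1$ for all $l$.

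I expect the main obstacle to be exactly this second step — confirming that the commutation bookkeeping in $\hat w$ produces a net index shift equal to $2\lvert W_u^{t_{a+l}}\rvert-2\lvert W_x^{t_{a+l}}\rvert$, i.e. reconciling the "$u$-first" order of $\hat w$ with the "physical" order of $w$. Everything else — the individual commutation moves and the final application of a cancellation relation — is a routine use of \ref{xx}, \ref{vv}, \ref{uu} and \ref{rr}.
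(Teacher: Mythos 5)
Your proposal matches the paper's proof: the paper also relocates $v_\rho$ rightward with \ref{xx} and \ref{vv} (splitting into the cases where some $t_q>t_{a+l}$ or none), verifies that the net shift of $+2$ per $u$-letter of $W_u^{t_{a+l}}$ and $-2$ per $x$-letter of $W_x^{t_{a+l}}$ lands the index at $i_{t_{a+l}}+2\lvert w_{t_{a+l}}^{-1}\rvert+1=k+3$, and cancels $v_{k+3}$ against the leading letter $x_k$ of $w_{t_{a+l}}^{-1}$ via \ref{rr}, contradicting $\hat w\in P$. The bookkeeping identity you isolate as the crux is exactly the computation the paper carries out, so this is essentially the same argument.
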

\begin{proof}
	Let $l\in\{1,...,b\}$. Assume $\rho = (t_{a+l})_u+1$. Suppose there exist $q\in\{1,...,a\}$ with $t_q>t_{a+l}$. Then 	$v_{\rho}w_{t_1}...w_{t_q}...w_{t_a}w_{t_{a+1}}^{-1}...w_{t_{a+b}}^{-1}  \stackrel{\text{\ref{xx}}}{\approx}w_{t_1}...v_\rho w_{t_q}...w_{t_a}w_{t_{a+1}}^{-1}...w_{t_{a+b}}^{-1} \stackrel{\text{\ref{vv}}}{\approx} w_{t_1}...w_{t_q}...w_{t_a}v_{\rho+2\lvert w_{t_q}...w_{t_a}\rvert} \\ w_{t_{a+1}}^{-1}...w_{t_{a+b}}^{-1}$. 
	Since $(t_{a+l})_u+1=i_{t_{a+l}}+2\lvert w_{t_{a+1}}^{-1}...w_{t_{a+l}}^{-1}\rvert-2\lvert w_{t_q}...w_{t_a}\rvert+1$, we have $\rho+2\lvert w_{t_q}...w_{t_a} \rvert=i_{t_{a+l}}+2\lvert w_{t_{a+1}}^{-1}...w_{t_{a+l}}^{-1}\rvert+1$. 
	Suppose  $t_q<t_{a+l}$ for all $q\in\{1,...,a\}$. Then we have $(t_{a+l})_u+1=i_{t_{a+l}}+2\lvert w_{t_{a+1}}^{-1}...w_{t_{a+l}}^{-1}\rvert+1$, i.e.  $v_{\rho}w_{t_1}...w_{t_q}...w_{t_a}w_{t_{a+1}}^{-1}...w_{t_{a+b}}^{-1}\stackrel{\text{\ref{xx}}}{\approx}w_{t_1}... w_{t_q}...w_{t_a}v_\rho w_{t_{a+1}}^{-1}...w_{t_{a+b}}^{-1}$. \\
	Both cases imply  $w_{t_1}...w_{t_q}...w_{t_a}v_{i_{t_{a+l}}+2\lvert w_{t_{a+1}}^{-1}...w_{t_{a+l}}^{-1}\rvert+1}  w_{t_{a+1}}^{-1}...w_{t_{a+b}}^{-1} \stackrel{\text{\ref{vv}}}{\approx} w_{t_1}... w_{t_q}...w_{t_a} w_{t_{a+1}}^{-1}...\\ v_{i_{t_{a+l}}+2\lvert w_{t_{a+l}}^{-1}\rvert+1}w_{t_{a+l}}^{-1}...w_{t_{a+b}}^{-1}$
	$\stackrel{\text{\ref{rr}}}{\approx}w_{t_1}...w_{t_q}...w_{t_a} w_{t_{a+1}}^{-1}...w_{t_{a+l}}^{-1}...w_{t_{a+b}}^{-1}$, i.e. we can cancel $v_\rho$ in $\hat{w}$ using \ref{xx}, \ref{vv}, and \ref{rr}, a contradiction.	
\end{proof} 
\begin{lem} \label{7}
	Let $\rho\in A$ and let $l\in\{1,...,b\}$ such that $t_{a+l}\neq m$. If $\rho\in\{(t_{a+l})_u+1,...,(t_{a+l}+1)_u-1\}$ then $\rho\in\{(t_{a+l})_u+2,...,(t_{a+l}+1)_u-1\}\subseteq A_w$.
\end{lem}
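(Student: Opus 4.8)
The plan is to run, in the dual situation, the same short argument that proved Lemma~\ref{4}: here the distinguished block $w_{t_{a+l}}$ is an $x$-block rather than a $u$-block, and the role played there by Lemma~\ref{mid} is now played by Lemma~\ref{x middle}. So I first fix $l\in\{1,\dots,b\}$ with $t_{a+l}\neq m$. By the choice of the indices $t_{a+1},\dots,t_{a+b}$ we have $w_{t_{a+l}}\in W_x$, and the position $t_{a+l}$ lies in $\{1,\dots,m-1\}$, which is exactly the range for which the recursive clause \ref{6w} is stated.

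Next I invoke Lemma~\ref{x middle}, which says $\rho\neq(t_{a+l})_u+1$ for every $\rho\in A$. Together with the hypothesis $\rho\in\{(t_{a+l})_u+1,\dots,(t_{a+l}+1)_u-1\}$ this immediately forces $\rho\in\{(t_{a+l})_u+2,\dots,(t_{a+l}+1)_u-1\}$, which is the first half of the conclusion. For the second half I apply clause \ref{6w} with $k=t_{a+l}$ (legitimate since $w_{t_{a+l}}\in W_x$ and $t_{a+l}\le m-1$): this gives $A_{t_{a+l}}=A_{t_{a+l}+1}\cup\{(t_{a+l})_u+2,\dots,(t_{a+l}+1)_u-1\}$, so $\{(t_{a+l})_u+2,\dots,(t_{a+l}+1)_u-1\}\subseteq A_{t_{a+l}}$. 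Since every recursive step of \ref{6w} has the shape $A_k=A_{k+1}\cup(\cdots)$, we obtain the chain $A_1\supseteq A_2\supseteq\cdots\supseteq A_m$, and clause \ref{7W} always yields $A_w\supseteq A_1$; concatenating these inclusions gives $\{(t_{a+l})_u+2,\dots,(t_{a+l}+1)_u-1\}\subseteq A_w$, completing the proof.

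The argument is essentially bookkeeping; all the genuine content — the impossibility of $\rho=(t_{a+l})_u+1$, proved by producing a cancellation of $v_\rho$ in $\hat w$ via \ref{xx}, \ref{vv} and \ref{rr} — has already been isolated in Lemma~\ref{x middle}, so nothing new of that kind needs to be done here. The one place to be careful is the index arithmetic: unlike in Lemma~\ref{4}, where one could replace $(t_q)_u$ by $i_{t_q}$, here $(t_{a+l})_u$ is the full indicator $i_{t_{a+l}}+2\lvert w_{t_{a+l}}\rvert-2\lvert W_u^{t_{a+l}}\rvert+2\lvert W_x^{t_{a+l}}\rvert$ and must be kept in indicator form throughout, and one should double-check that $t_{a+l}$, as a position in the word $w=w_1\cdots w_m\in Q_0$, is indeed the correct index to feed into \ref{6w} and \ref{7W}.
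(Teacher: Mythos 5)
Your proposal is correct and follows essentially the same route as the paper: excluding $\rho=(t_{a+l})_u+1$ via Lemma~\ref{x middle} and then reading off the inclusion $\{(t_{a+l})_u+2,\dots,(t_{a+l}+1)_u-1\}\subseteq A_w$ from clause \ref{6w}. The only difference is that you spell out the monotone chain $A_m\subseteq\cdots\subseteq A_1\subseteq A_w$, which the paper leaves implicit.
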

\begin{proof} It is a consequence of Lemma \ref{x middle} that  $\rho\in\{(t_{a+l})_u+2,...,(t_{a+l}+1)_u-1\}$ and by \ref{6w}, we have $\{(t_{a+l})_u+2,...,(t_{a+l}+1)_u-1\}\subseteq A_w$.
\end{proof}

\begin{lem} \label{8}
	Let $\rho\in A$. If  $t_{a+1}=m$ and $\rho\in\{m_u+1,...,n\}$ then $\rho\in\{m_u+2,...,n\}\subseteq A_w$.
\end{lem}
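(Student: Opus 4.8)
The plan is to run the same argument as for Lemma \ref{5} and Lemma \ref{7}, but with the roles swapped: here the hypothesis $t_{a+1}=m$ says that the last block $\hat w_m$ of $\hat w$ is the inverse of an $x$-block rather than a $u$-block, so I would replace the appeal to Lemma \ref{mid} by an appeal to Lemma \ref{x middle}, and then read off membership in $A_w$ from the recursive definition in \ref{5W}--\ref{7W}.

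First I would record the shape of the indicators at position $m$. Since $t_{a+b}<\cdots<t_{a+1}$ are exactly the indices of the $x$-blocks and $t_{a+1}=m$, the block $w_{t_{a+1}}=w_m$ lies in $W_x$; moreover $m$ is the last index, so $W_u^m=W_x^m=\emptyset$. Hence $m_x=i_m$ and $m_u=i_m+2\lvert w_m\rvert$, so in particular $m_u>m_x$ and we are in the first branch of \ref{5W}. Next I would exclude the boundary value $m_u+1$: applying Lemma \ref{x middle} with $l=1$ gives $\rho\neq (t_{a+1})_u+1=m_u+1$, and together with the hypothesis $\rho\in\{m_u+1,\dots,n\}$ this forces $\rho\in\{m_u+2,\dots,n\}$. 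In particular this set is nonempty, so $m_u+2\le n$, and then the first branch of \ref{5W} yields $A_m=\{m_u+2,\dots,n\}$.

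Finally, the inclusion $A_m\subseteq A_w$ is purely formal from the recursion: \ref{6w} shows $A_{k+1}\subseteq A_k$ for every $k\in\{1,\dots,m-1\}$, whence $A_m\subseteq A_1$, and \ref{7W} shows $A_1\subseteq A_w$ in each of its three cases. Combining, $\rho\in\{m_u+2,\dots,n\}=A_m\subseteq A_w$, which is the claim. I do not anticipate a genuine obstacle here; the only point requiring a moment's attention is verifying that we fall into the correct branch of \ref{5W}, i.e. that $m_u>m_x$, and this is immediate from $\lvert w_m\rvert\ge 1$. Everything else is a direct citation of Lemma \ref{x middle} and of the bookkeeping clauses \ref{5W}--\ref{7W}.
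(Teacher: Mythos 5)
Your proposal is correct and follows essentially the same route as the paper: the key step in both is to invoke Lemma \ref{x middle} (with $l=1$, using $t_{a+1}=m$) to rule out $\rho=m_u+1$, and then to read off $\{m_u+2,\dots,n\}\subseteq A_w$ from \ref{5W}. You simply make explicit two points the paper leaves implicit, namely that $w_m\in W_x$ forces $m_u>m_x$ (so the first branch of \ref{5W} applies) and that $A_m\subseteq A_1\subseteq A_w$ via \ref{6w} and \ref{7W}.
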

\begin{proof}
	Suppose $\rho=m_u+1=(t_{a+1})_u+1$.  By Lemma \ref{x middle}, we have $\rho\notin A$. Therefore, $\rho\in\{m_u+2,...,n\}\subseteq A_w$ by \ref{5W}.
\end{proof}

\begin{lem} \label{111}
	If $1<1_x<1_u$  then $\rho\notin A$ for all  $\rho\in\{1,...,1_u-1_x\}$.
\end{lem}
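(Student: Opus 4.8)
The plan is to argue by contradiction: assume $1 < 1_x < 1_u$ and that some $\rho \in \{1,\dots,1_u-1_x\}$ lies in $A$, and then show that $v_\rho$ can be cancelled from $\hat w$ using the relations \ref{xx}, \ref{vv}, \ref{uu}, \ref{rr}, contradicting $\hat w \in P$ (recall that membership in $P$ forces that no such $v_j$ can be cancelled via \ref{a}--\ref{rr}). The hypothesis $1 < 1_x < 1_u$ tells us that the leftmost of the two blocks in $\hat w_1$ is an $x$-block (so $t_1 = 1$ corresponds to a word in $W_x$, written as an inverse $w_{t_{a+1}}^{-1}$ in the $v_A w_{t_1}\cdots w_{t_a} w_{t_{a+1}}^{-1}\cdots w_{t_{a+b}}^{-1}$ normal form), since $1_u$ exceeds $1_x$ exactly when the displacement rule in \ref{7W} applies. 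I would first record, using the definitions of $1_x$ and $1_u$ in Section 3 together with Lemma \ref{k<k+1}, that the interval $\{1,\dots,1_u-1_x\}$ is precisely the set of ``leading'' indices sitting to the left of where $\hat w_1$ acts.

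The key computation is then local to $\hat w_1$. Write $\hat w_1 = w_{t_{a+1}}^{-1} = x_{j_1}^{-1}$ with $j_1$ the relevant starting index; since $\rho \le 1_u - 1_x$, the index $\rho$ satisfies $1 \le \rho \le j_1$ (after translating through the $W_u$-blocks to its left, if any, via \ref{vv}, exactly as in the proof of Lemma \ref{x middle}). Now apply \ref{xx} repeatedly to push $v_\rho$ rightward past any $u$-blocks $w_{t_1},\dots,w_{t_a}$ that precede it, incrementing its index by $2$ for each letter passed (this is the content of \ref{vv} in the reversed reading), until $v_\rho$ stands immediately to the left of $x_{j_1}^{-1} = w_{t_{a+1}}^{-1}$. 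At that point the index of $v_\rho$ has become some $\rho' \le j_1$, and then $v_{\rho'} x_{j_1}^{-1}$ — equivalently $v_{\rho'} (x_{j_1})^{-1}$ read through \ref{vv} and \ref{rr} — simplifies: the relation $v_{i+2} x_j \approx x_j v_i$ of \ref{vv} moves it through $x_{j_1}$ and lowers the index, and once the index drops to $1$ or $2$ (or hits $j_1+3$ on the inverted side) the relation \ref{rr}, $v_i x_j \approx x_j$, deletes it outright. Thus $v_\rho$ is cancelled in $\hat w$, contradicting $\hat w \in P$.

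The main obstacle I expect is bookkeeping: keeping precise track of how the index of $v_\rho$ changes as it is transported past each $u_i$ and each $x_j^{-1}$, and verifying that the hypothesis $\rho \le 1_u - 1_x$ is exactly what guarantees the index never overshoots the range where \ref{rr} (rather than \ref{uu}) can finish the cancellation. This is the same type of index-chasing that appears in Lemmas \ref{mid} and \ref{x middle}, so I would mirror their displayed chains of $\stackrel{\text{\ref{xx}}}{\approx}$, $\stackrel{\text{\ref{vv}}}{\approx}$, $\stackrel{\text{\ref{rr}}}{\approx}$ steps, adapting the shift amounts to the present situation; the structural input — that $1_x < 1_u$ forces an $x$-block to be leftmost — is what makes the final deletion possible via \ref{rr}.
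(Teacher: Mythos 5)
Your proposal is correct and follows essentially the same route as the paper: transport $v_\rho$ rightward through the $u$-blocks by \ref{vv} (gaining $2$ per letter), then through the $x$-letters by \ref{vv} (losing $2$ per letter) until its index lands in $\{1,2\}$, and delete it by \ref{rr}, contradicting $\hat w\in P$. One small correction to your structural gloss: $1_x<1_u$ does not mean the leftmost block of $\hat w$ is an $x$-block; it means $1_u-1_x=2\bigl(\lvert w^{-1}_{t_{a+b}}...w^{-1}_{t_{a+1}}\rvert-\lvert w_{t_1}...w_{t_a}\rvert\bigr)>0$, i.e.\ there are more $x$-letters than $u$-letters in total, and this surplus (together with $\rho\leq 1_u-1_x$) is what actually guarantees the index of $v_\rho$ can be driven down to $\{1,2\}$ before the $x$-letters are exhausted.
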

\begin{proof} Let $\rho\in\{1,...,1_u-1_x\}$. Assume $\rho\in A$.
	We	observe that $1_u-1_x= 2\lvert w^{-1}_{t_{a+b}}...w^{-1}_{t_{a+1}}\rvert-2\lvert w_{t_1}...w_{t_a}\rvert=2k$ for some positive integer $k$. We put $\mathcal{U}=w_{t_1}...w_{t_a}$ and $\mathcal{X}=w^{-1}_{t_{a+b}}...w^{-1}_{t_{a+1}}$, i.e. $2k=2\lvert\mathcal{X}\rvert-2\lvert\mathcal{U}\rvert$ and $\lvert\mathcal{X}\rvert=\lvert\mathcal{U}\rvert+k$. Let ${w_{t_{a+1}}^{-1}}...{w_{t_{a+b}}^{-1}}={y}_1...{y}_{\lvert\mathcal{U}\rvert}{y}_{\lvert\mathcal{U}\rvert+1}...{y}_{\lvert\mathcal{U}\rvert+k}$, where $y_1,...,y_{\lvert\mathcal{U}\rvert+k}\in\{x_1,...,x_{n-2}\}$. Then 
	$	
	v_\rho{w}_{t_1}...{w}_{t_a}{y}_1...{y}_{\lvert\mathcal{U}\rvert}{y}_{\lvert\mathcal{U}\rvert+1}...{y}_{\lvert\mathcal{U}\rvert+k}
	\stackrel{\text{\ref{vv}}}{\approx} {w}_{t_1}...{w}_{t_a}v_{\rho+2\lvert w_{t_1}...w_{t_a}\rvert} \\ {y}_1...{y}_{\lvert\mathcal{U}\rvert}{y}_{\lvert\mathcal{U}\rvert+1}...{y}_{\lvert\mathcal{U}\rvert+k}$. 
	Using Remark \ref{two}, it is routine to calculate that $2\lvert w^{-1}_{t_{a+b}}...w^{-1}_{t_{a+1}}\rvert< i_{t_{a+1}}+2\lvert w^{-1}_{t_{a+1}}\rvert$, i.e.
	$(1_u-1_x)+2\lvert w_{t_1}...w_{t_a}\rvert=   2\lvert w^{-1}_{t_{a+b}}...w^{-1}_{t_{a+1}}\rvert< i_{t_{a+1}}+2\lvert w^{-1}_{t_{a+1}}\rvert$. This implies $\rho+2\lvert w_{t_1}...w_{t_a}\rvert\leq i_{t_{a+1}}+2\lvert w^{-1}_{t_{a+1}}\rvert$. 
	Then 
	${w}_{t_1}...{w}_{t_a}v_{\rho+2\lvert w_{t_1}...w_{t_a}\rvert}{y}_1...{y}_{\lvert\mathcal{U}\rvert}{y}_{\lvert\mathcal{U}\rvert+1}...{y}_{\lvert\mathcal{U}\rvert+k}\stackrel{\text{\ref{vv}}}{\approx}
	{w}_{t_1}...{w}_{t_a}{y}_1\\...{y}_{\lvert\mathcal{U}\rvert}v_{\rho}{y}_{\lvert\mathcal{U}\rvert+1}...{y}_{\lvert\mathcal{U}\rvert+k}$. 
	Note that $1_u-1_x$ is even and there is $i\in\{2,4,...,1_u-1_x\}$   such that 
	$\rho\in\{i-1,i\}$. 
	If $\rho=i-1$ then $\rho-2\lvert{y}_{\lvert\mathcal{U}\rvert+1}...{y}_{\lvert\mathcal{U}\rvert+\frac{i}{2}-1}\rvert=1$.
	If $\rho=i$ then $\rho-2\lvert{y}_{\lvert\mathcal{U}\rvert+1}...{y}_{\lvert\mathcal{U}\rvert+\frac{i}{2}-1}\rvert=2$. Thus, ${w}_{t_1}...{w}_{t_a}{y}_1...{y}_{\lvert\mathcal{U}\rvert}v_{\rho}{y}_{\lvert\mathcal{U}\rvert+1}...{y}_{\lvert\mathcal{U}\rvert+k} \\
	\stackrel{\text{\ref{vv}}}{\approx} {w}_{t_1}...{w}_{t_a}{y}_1...{y}_{\lvert\mathcal{U}\rvert}{y}_{\lvert\mathcal{U}\rvert+1}...  v_{\rho-2\lvert{y}_{\lvert\mathcal{U}\rvert+1}...{y}_{\lvert\mathcal{U}\rvert+\frac{i}{2}-1}\rvert}{y}_{\lvert\mathcal{U}\rvert+\frac{i}{2}}...{y}_{\lvert\mathcal{U}\rvert+\frac{1_u-1_x}{2}}\\
	={w}_{t_1}...{w}_{t_a}{y}_1...{y}_{\lvert\mathcal{U}\rvert}{y}_{\lvert\mathcal{U}\rvert+1}...v_{\hat{\rho}}{y}_{\lvert\mathcal{U}\rvert+\frac{i}{2}}...{y}_{\lvert\mathcal{U}\rvert+\frac{1_u-1_x}{2}}$ (where $\hat{\rho}\in\{1,2\}$) \\
	$\stackrel{\text{\ref{rr}}}{\approx}
	{w}_{t_1}...{w}_{t_a}{y}_1...{y}_{\lvert\mathcal{U}\rvert}{y}_{\lvert\mathcal{U}\rvert+1}...{y}_{\lvert\mathcal{U}\rvert+\frac{i}{2}}...{y}_{\lvert\mathcal{U}\rvert+\frac{1_u-1_x}{2}}$, \\
	i.e. we can cancel $v_\rho$ in $\hat{w}$ using \ref{vv} and \ref{rr}, a contradiction.
\end{proof}  
\begin{lem} \label{10}
	Let $\rho\in A$ and $\rho\in\{1,...,1_u-1\}$.	If $1<1_u\leq 1_x$  then $\rho\in \{1,...,1_u-1\}\subseteq A_w$ and if $1<1_x<1_u$ then $\rho\in\{1_u-1_x+1,...,1_u-1\}\subseteq A_w$.
\end{lem}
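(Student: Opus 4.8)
The plan is to reduce the statement directly to the definition \ref{7W} of $A_w$ together with Lemma \ref{111}, so that essentially no new computation is required; the genuine work has already been carried out in the preceding lemmas.

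First I would record the relevant part of \ref{7W}. Both hypotheses of the lemma guarantee $1<1_u$ and $1<1_x$, so we are never in the branch ``$1\in\{1_x,1_u\}$''; hence $A_w=A_1\cup\{1,\dots,1_u-1\}$ when $1<1_u\leq 1_x$, and $A_w=A_1\cup\{1_u-1_x+1,\dots,1_u-1\}$ when $1<1_x<1_u$. In particular the two sets named in the conclusion, $\{1,\dots,1_u-1\}$ and $\{1_u-1_x+1,\dots,1_u-1\}$, are contained in $A_w$ in the respective cases. This disposes of the containment half of both assertions at once.

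For the case $1<1_u\leq 1_x$ there is nothing further to prove: the hypothesis $\rho\in\{1,\dots,1_u-1\}$ already places $\rho$ in the required set, which by the previous paragraph lies in $A_w$. For the case $1<1_x<1_u$ I would note that $1_u-1_x\geq 1$ and apply Lemma \ref{111}, which says that no element of $\{1,\dots,1_u-1_x\}$ belongs to $A$. Since $\rho\in A$ by hypothesis, this forces $\rho\geq 1_u-1_x+1$; combining this with the standing hypothesis $\rho\leq 1_u-1$ yields $\rho\in\{1_u-1_x+1,\dots,1_u-1\}\subseteq A_w$.

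I do not expect any real obstacle: the argument is pure bookkeeping layered on top of \ref{7W} and Lemma \ref{111}. The one point that needs a moment's care is verifying that the case split in \ref{7W} matches the hypotheses — i.e. that we land in the second or third branch rather than the first — which is immediate from $1<1_u$ and $1<1_x$.
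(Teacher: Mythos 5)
Your proposal is correct and follows the same route as the paper: the containments come straight from the definition \ref{7W} of $A_w$, and in the case $1<1_x<1_u$ Lemma \ref{111} excludes $\rho$ from $\{1,\dots,1_u-1_x\}$, forcing $\rho\in\{1_u-1_x+1,\dots,1_u-1\}$. No discrepancies to report.
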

\begin{proof} If $1<1_u\leq 1_x$ then  $\{1,...,1_u-1\}\subseteq A_w$ by \ref{7W}. 
	If $1<1_x<1_u$, it is a consequence of Lemma \ref{111} that  $\rho\in\{1_u-1_x+1,...,1_u-1\}$ and by \ref{7W}, we have $\{1_u-1_x+1,...,1_u-1\}\subseteq A_w$.
\end{proof}

\begin{lem} \label{11}
	We have $(t_q)_u\notin A$ for all $q\in\{1,...,a\}$.
\end{lem}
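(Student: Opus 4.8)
The plan is to argue by contradiction, in the spirit of Lemmas~\ref{mid}, \ref{x middle} and \ref{111}. Assume $(t_q)_u=i_{t_q}\in A$, so that the letter $v_{i_{t_q}}$ actually occurs in the prefix $v_A$ of $\hat{w}$. I will produce a sequence of applications of relations from $R$ that either deletes this occurrence of $v_{i_{t_q}}$, or lowers a subscript of some $u_i$ in $\hat{w}$, or eliminates a $u$-block from $\hat{w}$ --- any of which contradicts the fact that $\hat{w}$ is a terminal word of the algorithm (in $\hat{w}$ no $v_j$ can be cancelled and no subscript can be decreased by \ref{w}-\ref{yy}, even after first rearranging $\hat{w}$ by means of \ref{a}-\ref{rr}).

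The key step is to push the occurrence of $v_{i_{t_q}}$ rightward through $\hat{w}_1\cdots\hat{w}_m$ until it sits immediately in front of the block $w_{t_q}=u_{i_{t_q},j_{t_q}}$, with subscript still $i_{t_q}$. For every $u$-block $w_{t_k}$ with $k<q$ lying to its left, Remark~\ref{two} gives $i_{t_k}+2\lvert w_{t_k}\rvert+2\le i_{t_q}$, so each letter $u_s$ of $w_{t_k}$ satisfies $s\le i_{t_q}-4$ and $v_{i_{t_q}}$ commutes past it by \ref{xx} with subscript unchanged. For an $x$-block occurring to the left of $w_{t_q}$ one argues as in the proofs of Lemmas~\ref{mid} and \ref{x middle}: either all $x$-subscripts in that block are at least $4$ below $i_{t_q}$, so \ref{xx} again applies with no change of subscript, or the travelling letter meets some letter of that block whose subscript forces \ref{vv} and then \ref{rr}, so that it is already deleted there --- the desired contradiction.

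Once $v_{i_{t_q}}$ sits immediately in front of $w_{t_q}$ with subscript $i_{t_q}$, we conclude at once. If $i_{t_q}\ge 2$, relation \ref{zz} gives $v_{i_{t_q}}u_{i_{t_q},j_{t_q}}\approx v_{i_{t_q}+2j_{t_q}+1}\,u_{i_{t_q}-1,j_{t_q}}$, lowering the subscript of $w_{t_q}$, which is impossible for $\hat{w}\in P$. If $i_{t_q}=1$, then $w_{t_q}=u_{1,j_{t_q}}$ and relation \ref{w} gives $v_1 u_{1,j_{t_q}}\approx v_1v_2\cdots v_{2j_{t_q}+2}$, eliminating the $u$-block $w_{t_q}$, which is again impossible for $\hat{w}\in P$.

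The main obstacle is the bookkeeping in the second paragraph: because a travelling $v$-letter changes its subscript whenever it slides through an $x$-block, one must track that subscript carefully while moving $v_{i_{t_q}}$ past the possibly several, possibly interleaved $x$-blocks that precede $w_{t_q}$ in $\hat{w}$, and verify that in every configuration one either reaches $w_{t_q}$ with subscript still $i_{t_q}$ or triggers one of the deleting relations \ref{vv}-\ref{rr} en route. This is the same style of case analysis already carried out in Lemmas~\ref{w in Q}, \ref{mid} and \ref{x middle}, resting on Remark~\ref{two} and the ordering rule between the $t_k$'s and the $t_{a+l}$'s, so it requires no genuinely new idea; once $v_{i_{t_q}}$ is brought in front of $w_{t_q}$, the conclusion is immediate.
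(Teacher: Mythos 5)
Your argument is correct and follows essentially the same route as the paper: push $v_{i_{t_q}}$ rightward past the preceding $u$-blocks via \ref{xx} (justified by Remark \ref{two}), then apply \ref{zz} when $i_{t_q}\geq 2$ to lower a subscript, or \ref{w} when $i_{t_q}=1$ to delete the block, contradicting terminality of $\hat{w}$. The only superfluous part is your worry about $x$-blocks lying to the left of $w_{t_q}$: in $\hat{w}=v_Aw_{t_1}\cdots w_{t_a}w_{t_{a+1}}^{-1}\cdots w_{t_{a+b}}^{-1}$ all $u$-blocks precede all $x$-blocks, so that case never arises.
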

\begin{proof}  Let $q\in\{1,...,a\}$. We have $w_{t_q}=u_{i_{t_q}}u_{i_{t_q}+2}...u_{i_{t_q}+2\lvert w_{t_q} \rvert-2}$ and $(t_q)_u=i_{t_q}$. Assume $(t_q)_u\in A$. If $i_{t_q}\geq 2$ then  $v_{i_{t_q}}w_{t_1}...w_{t_q}...w_{t_a}w_{t_{a+1}}^{-1}...w_{t_{a+b}}^{-1}  \stackrel{\text{\ref{xx}}}{\approx}w_{t_1}...v_{i_{t_q}}u_{i_{t_q}}u_{i_{t_q}+2}...u_{i_{t_q}+2\lvert w_{t_q} \rvert-2}w_{t_{q+1}}...\\w_{t_a}w_{t_{a+1}}^{-1}...w_{t_{a+b}}^{-1}
	\stackrel{\text{\ref{zz}}}{\approx}w_{t_1}...v_{i_{t_q}+2\lvert w_{t_q} \rvert+1}u_{i_{t_q}-1}u_{i_{t_q}+1}...u_{i_{t_q}+2\lvert w_{t_q} \rvert-3}w_{t_{q+1}}...w_{t_a}w_{t_{a+1}}^{-1}...w_{t_{a+b}}^{-1}$. \\
	If $i_{t_q}=1$ then  $q=1$ and 
	$v_{i_{t_1}}w_{t_1}w_{t_2}...w_{t_a}w_{t_{a+1}}^{-1}...w_{t_{a+b}}^{-1}
	=v_1u_{1}u_{3}...u_{{1}+2\lvert w_{t_1} \rvert-2}w_{t_2}...w_{t_a}w_{t_{a+1}}^{-1}...w_{t_{a+b}}^{-1} \\
	\stackrel{\text{\ref{w}}}{\approx} v_1v_2...v_{1+2\lvert w_{t_1} \rvert+1}w_{t_{2}}...w_{t_a}w_{t_{a+1}}^{-1}...w_{t_{a+b}}^{-1}$. 
	We observe that we can replace several variables in $\hat{w}$ by variables with decreasing index by \ref{zz} and the variables $u_1,u_{3},...,u_{{1}+2\lvert w_{t_1} \rvert-2}$ were canceled  in $\hat{w}$ by \ref{w}, respectively, a contradiction.
\end{proof}

\begin{lem} \label{12}
	We have $(t_{a+l})_u\notin A$	 for all $l\in\{1,...,b\}$.
\end{lem}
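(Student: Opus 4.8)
The plan is to follow the template of Lemmas~\ref{x middle} and~\ref{11} almost verbatim, applied to $\rho=(t_{a+l})_u$ in place of $(t_{a+l})_u+1$. Fix $l\in\{1,\dots,b\}$, write $i:=i_{t_{a+l}}$ and $j:=j_{t_{a+l}}$, so $w_{t_{a+l}}=x_{i,j}$ and $x_{i,j}^{-1}$ is a factor of $\hat{w}$, and put $k:=i+2j-2$. Assuming $(t_{a+l})_u\in A$, I would first transport the letter $v_{(t_{a+l})_u}$ rightwards through $\hat{w}$ by the very sequence of moves used in the proof of Lemma~\ref{x middle}: slide it with \ref{xx} past the $u$-blocks preceding $w_{t_{a+l}}$ in the merge order, then, if there is a $q$ with $t_q>t_{a+l}$, with \ref{vv} past $w_{t_q}\cdots w_{t_a}$, picking up $2\lvert w_{t_q}\cdots w_{t_a}\rvert$, and finally with \ref{vv} past $w_{t_{a+1}}^{-1},\dots,w_{t_{a+l-1}}^{-1}$, losing $2$ per letter. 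Since each move is identical to the corresponding one there and only the starting index is smaller by $1$, the letter ends up immediately to the left of $w_{t_{a+l}}^{-1}$ as $v_{i+2j}=v_{k+2}$, one less than the $v_{i+2j+1}$ obtained in Lemma~\ref{x middle}.

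Next I would observe that $v_{k+2}x_{i,j}^{-1}$ is precisely the left-hand side of relation \ref{yy} when $i\geq2$, and precisely the (single-letter) left-hand side of relation \ref{x} when $i=1$; this is where the argument departs from Lemma~\ref{x middle}, where the arriving letter had index $k+3$ and got absorbed by \ref{rr}. The case split on $i$ then mimics Lemma~\ref{11}: if $i\geq2$, one application of \ref{yy} rewrites the factor $x_{i,j}^{-1}$ of $\hat{w}$ as $x_{i-1,j}^{-1}$, decreasing the indices of the $x$-variables of $\hat{w}$; if $i=1$, one application of \ref{x} removes the whole block $x_{1,j}^{-1}$ from $\hat{w}$. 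Either way we can decrease an index or remove a variable in $\hat{w}$, contradicting $\hat{w}\in P$; hence $(t_{a+l})_u\notin A$. Should the transport phase instead cause $v_{(t_{a+l})_u}$ to be absorbed via \ref{uu} or \ref{rr}, that already contradicts $(t_{a+l})_u\in A$, so this possibility is harmless.

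The routine part is re-running the slide computation of Lemma~\ref{x middle}; the point needing genuine care is checking that lowering the starting index by $1$ does not invalidate any step of that slide — a \ref{xx}-step performed there at its boundary, where the $v$-index equals the index of the passed $u$- or $x$-letter plus $4$, could now fail — together with the degenerate subcases $t_{a+l}=m$, where the slide past the $x$-blocks is empty, and the case where no $u$-block follows $w_{t_{a+l}}$ in the merge order, where the \ref{vv}-slide past the $u$-blocks is absent. I expect this index bookkeeping, not the closing \ref{yy}/\ref{x} step, to be the main obstacle, but it is essentially the computation already carried out in Lemmas~\ref{x middle} and~\ref{11}.
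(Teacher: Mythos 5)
Your proposal is correct and follows essentially the same route as the paper's proof: transport $v_{(t_{a+l})_u}$ rightwards with \ref{xx} and \ref{vv} until it sits as $v_{k+2}$ immediately before $w_{t_{a+l}}^{-1}=x_{i,j}^{-1}$, then derive the contradiction with $\hat{w}\in P$ by lowering indices via \ref{yy} or deleting the block via \ref{x}. The only (harmless, indeed slightly cleaner) divergence is that you split on $i\geq 2$ versus $i=1$, matching the stated applicability of \ref{yy} and \ref{x}, whereas the paper splits on $i_{t_{a+l}}+2\lvert w_{t_{a+l}}^{-1}\rvert>3$ versus $=3$.
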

\begin{proof} Let $l\in\{1,...,b\}$. Now assume that $(t_{a+l})_u\in A$. We will have the following two cases. In the first case, we suppose that there exists $q\in\{1,...,a\}$ with $t_q>t_{a+l}$ and, of course, for the trivial second case is supposed $t_q<t_{a+l}$ for all $q\in\{1,...,a\}$. Using  \ref{xx} and \ref{vv} in the first case and \ref{vv} in the second case, together with a few tedious calculations, both cases imply $v_{(t_{a+l})_u}w_{t_1}...w_{t_q}...w_{t_a}w_{t_{a+1}}^{-1}...w_{t_{a+b}}^{-1}\approx w_{t_1}...w_{t_a}v_{i_{t_{a+l}}+2\lvert w_{t_{a+1}}^{-1}...w_{t_{a+l}}^{-1}\rvert}w_{t_{a+1}}^{-1}...w_{t_{a+b}}^{-1}$. It is routine to calculate that \\ $w_{t_1}...w_{t_a}v_{i_{t_{a+l}}+2\lvert w_{t_{a+1}}^{-1}...w_{t_{a+l}}^{-1}\rvert}w_{t_{a+1}}^{-1}...w_{t_{a+b}}^{-1}\stackrel{\text{\ref{vv}}}\approx w_{t_1}...w_{t_a} w_{t_{a+1}}^{-1}...v_{i_{t_{a+l}}+2\lvert w_{t_{a+l}}^{-1}\rvert}w_{t_{a+l}}^{-1}...w_{t_{a+b}}^{-1}$. \\
	If ${i_{t_{a+l}}+2\lvert w_{t_{a+l}}^{-1}\rvert}>3$ then $w_{t_1}...w_{t_a} w_{t_{a+1}}^{-1}...v_{i_{t_{a+l}}+2\lvert w_{t_{a+l}}^{-1}\rvert}w_{t_{a+l}}^{-1}...w_{t_{a+b}}^{-1} \\
	=w_{t_1}...w_{t_a} w_{t_{a+1}}^{-1}...v_{i_{t_{a+l}}+2\lvert w_{t_{a+l}}^{-1}\rvert}x_{i_{t_{a+l}}+2\lvert w_{t_{a+l}} \rvert-2}x_{i_{t_{a+l}}+2\lvert w_{t_{a+l}} \rvert-4}...x_{i_{t_{a+l}}}w_{t_{a+l+1}}^{-1}...w_{t_{a+b}}^{-1} \\ \stackrel{\text{\ref{yy}}}\approx w_{t_1}...w_{t_a} w_{t_{a+1}}^{-1}...v_{i_{t_{a+l}}+2\lvert w_{t_{a+l}}^{-1}\rvert+1}x_{i_{t_{a+l}}+2\lvert w_{t_{a+l}} \rvert-3}x_{i_{t_{a+l}}+2\lvert w_{t_{a+l}} \rvert-5}...x_{i_{t_{a+l}}-1}w_{t_{a+l+1}}^{-1}...w_{t_{a+b}}^{-1}$. \\
	If ${i_{t_{a+l}}+2\lvert w_{t_{a+l}}^{-1}\rvert}=3$ then $w_{t_{a+b}}^{-1}=x_1$. Thus $w_{t_1}...w_{t_a}v_{i_{t_{a+l}}+2\lvert w_{t_{a+1}}^{-1}...w_{t_{a+l}}^{-1}\rvert}w_{t_{a+1}}^{-1}...w_{t_{a+b}}^{-1} \\ \stackrel{\text{\ref{vv}}}\approx w_{t_1}...w_{t_a} w_{t_{a+1}}^{-1}...w_{t_{a+b-1}}^{-1}v_3x_1 \stackrel{\text{\ref{x}}}\approx w_{t_1}...w_{t_a} w_{t_{a+1}}^{-1}...w_{t_{a+b-1}}^{-1}v_1v_2v_3v_4$. 
	We observe that we can replace several variables in $\hat{w}$ by variables with decreasing index by \ref{yy} and the variable $x_1$ can be canceled  in $\hat{w}$ by \ref{x}, respectively, a contradiction. 
\end{proof}
If we summarize the previous lemmas, then we obtain:
\begin{lem} \label{A sub W}
	We have $A\subseteq A_w$.
\end{lem}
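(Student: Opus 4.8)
This lemma is the synthesis promised at the end of the section, so the plan is to fix an arbitrary $\rho\in A$ and to prove $\rho\in A_w$ by a single exhaustive case distinction on the location of $\rho$ relative to the indicators $k_u$, each case being closed by one of the preceding lemmas.

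First I would set the stage. By Lemma \ref{w in Q} we have $w=w_1\ldots w_m\in Q_0$, and by Lemma \ref{k<k+1} the numbers $1_u<2_u<\cdots<m_u$ are strictly increasing; hence the three families of intervals $\{1,\ldots,1_u-1\}$, $\{k_u,\ldots,(k+1)_u-1\}$ for $1\le k\le m-1$, and $\{m_u,\ldots,n\}$ cover $\overline n$, so exactly one of them contains $\rho$ (recall $A\subseteq\overline n$). Moreover $\{t_1,\ldots,t_a\}$ lists the indices $k$ with $w_k\in W_u$ and $\{t_{a+1},\ldots,t_{a+b}\}$ those with $w_k\in W_x$, and $\{t_1,\ldots,t_{a+b}\}=\{1,\ldots,m\}$; so if $\rho$ lies in the interval attached to the index $k$, then $k=t_q$ for some $q\le a$ (and $w_k\in W_u$) or $k=t_{a+l}$ for some $l\le b$ (and $w_k\in W_x$).

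Then I would dispatch the three cases. (1) If $\rho\in\{1,\ldots,1_u-1\}$, then $1<1_u$; for the sub-cases $1<1_u\le 1_x$ and $1<1_x<1_u$, Lemma \ref{10} (which uses Lemma \ref{111} in the latter) gives $\rho\in A_w$ directly, and the only remaining sub-case $1\in\{1_x,1_u\}$ is the one where \ref{7W} stipulates no ``head'' for $A_w$; here one argues as in Lemma \ref{111}, pushing $v_\rho$ rightward through the word by \ref{vv} and absorbing it by \ref{rr}, to conclude that no element of $A$ lies below $1_u$, so this sub-case is vacuous. (2) If $\rho\in\{k_u,\ldots,(k+1)_u-1\}$ with $k\le m-1$: when $k=t_q$, Lemma \ref{11} rules out $\rho=(t_q)_u$ and Lemma \ref{4} --- which itself discards the block interior via Lemma \ref{mid} --- gives $\rho\in\{(t_q)_u+2\lvert w_{t_q}\rvert+2,\ldots,(t_q+1)_u-1\}\subseteq A_w$; when $k=t_{a+l}$, Lemmas \ref{12}, \ref{x middle}, \ref{7} give instead $\rho\in\{(t_{a+l})_u+2,\ldots,(t_{a+l}+1)_u-1\}\subseteq A_w$; in both cases the final inclusion is \ref{6w}. (3) If $\rho\in\{m_u,\ldots,n\}$, then $m=t_a$ (so $w_m\in W_u$) or $m=t_{a+1}$ (so $w_m\in W_x$), and Lemma \ref{5}, resp.\ Lemma \ref{8}, together with \ref{5W}, places $\rho$ in the tail of $A_w$. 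Since the three cases are exhaustive, $\rho\in A_w$, i.e.\ $A\subseteq A_w$.

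The difficulty is not in any new computation --- every case is closed by a lemma already in hand --- but in the bookkeeping that makes the three interval families decompose $\overline n$ exactly in step with the recursive definition \ref{5W}--\ref{7W} of $A_w$, and in matching the optional ``head'' and ``tail'' of $A_w$ (the alternatives in \ref{7W}, the ``otherwise'' branch of \ref{5W}, and the choice $t_a=m$ versus $t_{a+1}=m$) against the corresponding emptiness of $A\cap\{1,\ldots,1_u-1\}$ and of $A\cap\{m_u,\ldots,n\}$; checking those alignments --- in particular the boundary sub-case $1\in\{1_x,1_u\}$ --- is the only point that needs care.
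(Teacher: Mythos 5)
Your argument follows essentially the same route as the paper's: partition $\overline{n}$ into the head $\{1,\dots,1_u-1\}$, the blocks between consecutive indicators $k_u$, and the tail beyond $m_u$, and close each case by citing Lemmas \ref{10}, \ref{11}/\ref{12}/\ref{4}/\ref{7}, and \ref{5}/\ref{8} respectively. You are in fact a little more careful than the paper in the head case, where you explicitly dispose of the sub-case $1\in\{1_x,1_u\}$ (not literally covered by Lemma \ref{10}) by re-running the cancellation argument of Lemma \ref{111}.
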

\begin{proof}
	Let $\rho\in A$. Then it is easy to verify that $\rho\in\{1,...,1_u\}$ or $\rho\in\{k_u+1,...,(k+1)_u\}$ for some $k\in\{1,....,m-1\}$ or $\rho\in \{m_u+1,...,n\}$. Suppose that $\rho\in\{k_u+1,...,(k+1)_u-1\}$ for some $k\in\{1,...,m-1\}$.  Lemma \ref{11} and \ref{12} show that $k_u\notin A$. Then we can conclude that $\rho\in A_w$ by Lemma \ref{4} and \ref{7}. Suppose $\rho\in\{m_u+1,...,n\}$. Then we can conclude that $\rho\in A_w$ by Lemma \ref{5} and \ref{8}.	Finally, we suppose that $\rho\in\{1,...,1_u-1\}$. Then we can conclude that $\rho\in A_w$ by Lemma \ref{10}.  Eventually, we  have $\rho\in A_w$ for all $\rho\in A$. Therefore $A\subseteq A_w$.
\end{proof}
Lemma \ref{w in Q} and \ref{A sub W} prove that $\hat{w}=v_A\hat{w}_1...\hat{w}_{m}\in W_n$. Consequently, we have:				
\begin{prop}   \label{P in}
	$P \subseteq W_n$.
\end{prop}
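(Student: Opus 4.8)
The plan is simply to combine, for a fixed word $\hat w=v_A\hat w_1\ldots\hat w_m\in P$, the three preparatory facts already in hand: that the reindexed word $w=w_1\ldots w_m$ lies in $Q_0$ (Lemma \ref{w in Q}), that $A\subseteq A_w$ (Lemma \ref{A sub W}), and the purely notational observation that the word $\hat w_1\ldots\hat w_m$ is literally $w^*$. So the whole proof is an assembly step; the substance has been discharged by the preceding lemmas.

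First I would dispose of the degenerate case. If $\hat w$ contains no factor from $W_x^{-1}\cup W_u$, i.e. $m=0$, then $\hat w=v_A$ for some $A\subseteq\overline{n}$, and this word belongs to the second part $\{v_A:A\subseteq\overline{n}\}$ of $W_n$ by definition; hence we may assume $m\geq 1$.

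Next, recalling the decomposition fixed just before Lemma \ref{k<k+1}, there are $a,b\geq 0$ with $a+b=m$ and an enumeration $t_1<\cdots<t_a$, $t_{a+b}<\cdots<t_{a+1}$ of $\{1,\ldots,m\}$ (with the interleaving rule given there) such that $w_{t_1},\ldots,w_{t_a}\in W_u$, $w_{t_{a+1}},\ldots,w_{t_{a+b}}\in W_x$, and $\hat w=v_A w_{t_1}\ldots w_{t_a}w_{t_{a+1}}^{-1}\ldots w_{t_{a+b}}^{-1}$. I would then read off, directly from the definitions of $W_u^0$ and $W_x^0$ (the word $w_1\ldots w_m$ with all $x$-variables, resp. all $u$-variables, deleted), that $W_u^0=w_{t_1}\ldots w_{t_a}$ and $W_x^0=w_{t_{a+b}}\ldots w_{t_{a+1}}$, so that $w^*=W_u^0(W_x^0)^{-1}=w_{t_1}\ldots w_{t_a}w_{t_{a+1}}^{-1}\ldots w_{t_{a+b}}^{-1}$; therefore $\hat w=v_A w^*$ as an equality of words, not merely modulo $R$.

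Finally, Lemma \ref{w in Q} gives $w=w_1\ldots w_m\in Q_0$ and Lemma \ref{A sub W} gives $A\subseteq A_w$, whence $\hat w=v_A w^*\in\{v_A w^*:w\in Q_0,\ A\subseteq A_w\}\subseteq W_n$. Since $\hat w\in P$ was arbitrary, $P\subseteq W_n$. I expect the only delicate point to be that middle step, namely checking that the grouping and ordering conventions defining $w^*$ reproduce exactly the word $\hat w_1\ldots\hat w_m$ produced by the algorithm; but this is bookkeeping about the index ordering $t_1,\ldots,t_{a+b}$, not mathematics, since all the genuine content — that $w$ satisfies \ref{1W}--\ref{4W} and that every surviving index lies in $A_w$ — has already been absorbed into the earlier lemmas.
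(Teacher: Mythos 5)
Your proposal is correct and follows essentially the same route as the paper, which likewise obtains $P\subseteq W_n$ by fixing $\hat w=v_A\hat w_1\ldots\hat w_m\in P$ and combining Lemma \ref{w in Q} ($w\in Q_0$) with Lemma \ref{A sub W} ($A\subseteq A_w$). Your extra care about the degenerate case $m=0$ and the literal identification $\hat w_1\ldots\hat w_m=w^*$ only makes explicit bookkeeping that the paper leaves implicit.
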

By the definition of the set $P$  and Proposition \ref{P in}, it is proved:
\begin{cor}   \label{co 1}
	Let $w\in X_n^*$. Then there is $w'\in P\subseteq W_n$ with $w\approx w'$.
\end{cor}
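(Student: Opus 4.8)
The plan is to read the corollary off from the algorithm of Section~3 together with Proposition~\ref{P in}; almost all of the real work has already been done. For $w=\epsilon$ the claim is trivial, since $\epsilon=v_\emptyset\in W_n$ (if one insists on $w'\in P$, one may simply adjoin $\epsilon$ to the finite set $P$, which leaves $P\subseteq W_n$ intact). So I would take $w\in X_n^*\setminus\{\epsilon\}$ and apply the algorithm to it.

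The point to record is that each elementary move of the algorithm --- bringing the $v_j$'s to the front and cancelling them by \ref{a}--\ref{rr}; separating the $u_i$'s from the $x_i$'s by $(E)$ and \ref{a}--\ref{rr}; making the indices monotone by \ref{l2}--\ref{l7} and \ref{a}--\ref{rr}; contracting the factors $x_{i+3}x_i$, $x_{i+1}x_i$, $x_i^2$, $u_i^2$, $u_iu_{i+3}$, $u_iu_{i+1}$ by \ref{l2}, \ref{dd}--\ref{h}, and \ref{a}--\ref{rr}; deleting letters by \ref{j}--\ref{mm}, Lemma~\ref{i-ii}, and \ref{a}--\ref{rr}; and lowering indices by \ref{w}--\ref{yy} and \ref{a}--\ref{rr} --- replaces a factor $u$ of the current word by a word $v$ with $(u,v)\in\rho_R$. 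Hence, if $w=w_0\approx w_1\approx\cdots\approx w_r=w'$ is the chain of words produced by one run of the algorithm, then $(w,w')\in\rho_R$, i.e.\ $w\approx w'$ is a consequence of $R$. That the run is finite was already noted when the algorithm was introduced: in each pass neither the number of occurrences of letters from $\{u_1,\dots,u_{n-2},x_1,\dots,x_{n-2},v_1,\dots,v_n\}$ nor any index of a $u_i$ or $x_i$ increases, so after finitely many passes the word is unchanged. By the very definition of $P$, the terminal word $w'$ lies in $P$.

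It then remains only to invoke Proposition~\ref{P in}, which states $P\subseteq W_n$; so $w'\in P\subseteq W_n$, which is the assertion. I do not anticipate any genuine difficulty here, since the substantive facts --- termination of the algorithm and membership of its outputs in $W_n$ --- are precisely what Lemmas~\ref{w in Q} and \ref{A sub W} and Proposition~\ref{P in} provide. The only small caveats are the trivial handling of $\epsilon$ and the remark that, even if the algorithm is not single-valued, the corollary asks merely for \emph{some} word in $P$ that is equivalent to $w$, which one run delivers.
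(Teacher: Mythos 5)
Your proposal matches the paper's own argument: the corollary is stated there as an immediate consequence of the definition of $P$ (each run of the algorithm transforms $w$ into some $w'\in P$ using only relations from $R$, so $w\approx w'$) together with Proposition~\ref{P in}. Your additional remarks on the empty word and on termination are harmless elaborations of what the paper leaves implicit.
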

\section{A Presentation for $IOF_n^{par}$}
In this section, we exhibit a  presentation for $IOF_n^{par}$. Concerning the results from the previous sections, it remains to show that $\lvert W_n\rvert \leq \lvert IOF_n^{par}\rvert$. For this, we construct the word $w_\alpha,$ for all $\alpha\in IOF_n^{par}$,  in the following way.  Let  $\alpha = \bigl(\begin{smallmatrix}
d_1 &<&d_2&<&   \cdots & < &d_p \\
m_1 & &m_2 & & \cdots &    & m_p
\end{smallmatrix}\bigr) \in IOF_n^{par}\backslash\{\varepsilon\}$ for a positive integer $p\leq n$. There are a unique $l\in\{0,1,...,p-1\}$ and a unique set $\{r_1,...,r_l\}\subseteq \{1,...,p-1\}$ such that (i)-(iii) are satisfied:\\
\indent (i) $r_1<...<r_l$;\\
\indent (ii) $d_{r_i+1}-d_{r_i}\neq m_{r_i+1}-m_{r_i}$ for $i\in\{1,...,l\}$; \\
\indent (iii) $d_{i+1}-d_{i}= m_{i+1}-m_{i}$ for $i\in\{1,...,p-1\}\backslash\{r_1,...,r_l\}$.  \\ 
Note that $l=0$ means $\{r_1,...,r_l\}=\emptyset$. Further, we put $r_{l+1} = p$.
For $i\in\{1,...,l\}$, we define
\begin{align*}
w_i 
=& \begin{cases}
x_{m_{r_i},\frac{(m_{r_i+1}-m_{r_i})-(d_{r_i+1}-d_{r_i})}{2}} \ \mbox{if} \ m_{r_i+1}-m_{r_i}>d_{r_i+1}-d_{r_i}; \\
u_{d_{r_i},\frac{(d_{r_i+1}-d_{r_i})-(m_{r_i+1}-m_{r_i})}{2}} \ \mbox{if} \ m_{r_i+1}-m_{r_i}<d_{r_i+1}-d_{r_i}. 
\end{cases}
\end{align*} Obviously, we have $w_i\in W_x\cup W_u$ for all $i\in\{1,...,l\}$. If $m_p=d_p$ then we put $w_{l+1}=\epsilon$. If $m_p \neq d_p$, we define additionally
\begin{align*}
w_{l+1} 
=& \begin{cases}
x_{m_{p},\frac{d_p-m_p}{2}} \ \mbox{if} \ d_p>m_p ; \\
u_{d_{p},\frac{m_p-d_p}{2}} \ \mbox{if} \ d_p<m_p. 
\end{cases}&&\qedhere
\end{align*} 
Clearly, $w_{l+1}\in W_x\cup W_u$.  We consider the word 
\begin{center}
	$w=w_1...w_{l+1}$.
\end{center} From this word, we construct a new word $w_\alpha^*$ by arranging the subwords $s\in W_x$ in reverse order at the end, replacing $s$ by $s^{-1}$. In other words, we consider the word \begin{center}
	$w_\alpha^*= w_{s_1}...w_{s_a}w_{s_{a+1}}^{-1}...w_{s_{a+b}}^{-1}$
\end{center} such that $w_{s_1},...,w_{s_a} \in W_u$, $w_{s_{a+1}},...,w_{s_{a+b}}\in W_x$ and $\{w_{s_1},...,w_{s_a},w_{s_{a+1}},...,w_{s_{a+b}}\}=\{w_1,...,w_{a+b}\}$, where $s_1<...<s_a, s_{a+b}<...<s_{a+1}$, and $a,b \in \overline{n}\cup\{0\}$ with
\begin{center}
	$a+b=
	\begin{cases}
	l  & \mbox{if} \ d_p=m_p;   \\
	l+1 & \mbox{if} \ d_p\neq m_p.
	\end{cases}$
\end{center} For convenient, $a=0$ means $w_\alpha^*=w_{s_{a+1}}^{-1}...w_{s_{a+b}}^{-1}  $ and $b=0$ means $w_\alpha^*=w_{s_1}...w_{s_a}$.  Now, we add recursively letters from the set $\{v_1,...,v_n\}\subseteq X_n$ to the word $w_\alpha^*$, obtaining new words $\lambda_0,\lambda_1,...,\lambda_p$. \\
\noindent (1)	For $d_p \leq n-2$: \\
\indent	(1.1) if $m_p<d_p$ then $\lambda_0= v_{d_p+2}...v_nw^*_\alpha$; \\
\indent	(1.2) if $n-1>m_p>d_p$ then $\lambda_0= v_{m_p+2}...v_nw^*_\alpha$; \\ 
\indent	(1.3) if $m_p=d_p$ then $\lambda_0= v_{m_p+1}...v_nw^*_\alpha$; \\
\indent otherwise $\lambda_0=w^*_\alpha$. \\
(2)	 If  $d_p=m_p=n-1$ then $\lambda_0=v_nw^*_\alpha$.  Otherwise $\lambda_0=w^*_\alpha$.  \\
(3)	 For $k\in\{2,...,p\}$: \\
\indent(3.1) if $2\leq m_k-m_{k-1}=d_k-d_{k-1}$ then $\lambda_{p-k+1}= v_{d_{k-1}+1}...v_{d_k-1}\lambda_{p-k}$; \\
\indent(3.2) if $2< m_k-m_{k-1}<d_k-d_{k-1}$ then $\lambda_{p-k+1} = v_{d_k-(m_k-m_{k-1}-2)}...v_{d_k-1}\lambda_{p-k}$; \\
\indent	(3.3)	if $m_k-m_{k-1}>d_k-d_{k-1}>2$ then $\lambda_{p-k+1}=v_{d_{k-1}+2}...v_{d_k-1}\lambda_p$; \\
\indent otherwise $\lambda_{p-k+1}=\lambda_{p-k}$. \\
(4)	If $d_1=1$ or $m_1=1$ then $\lambda_p=\lambda_{p-1}$. \\
(5)	 If $1<d_1\leq m_1$ then $\lambda_p=v_1...v_{d_1-1}\lambda_{p-1}$. \\
(6)	 If $1<m_1<d_1$ then $\lambda_p=v_{d_1-m_1+1}...v_{d_1-1}\lambda_{p-1}$. \\

\indent The word $\lambda_p$ induces a set  $A=\{a\in\overline{n}: v_a \ \mbox{is a variable in}\ \lambda_p\}$ and it is easy to verify that  $\rho\notin A$ for all $\rho\in dom(\alpha)$. We put $w_\alpha=\lambda_p$. The word $w_\alpha$ has the form $w_\alpha=v_Aw_\alpha^*$. \\

Our next aim is to present the relationship between the cardinality of $W_n$ and $IOF_n^{par}$. This leading us to assume the existence of a map  $f: IOF_n^{par}\backslash \{\varepsilon\}\rightarrow W_n \backslash \{v_{\overline{n}}\}$,
where $f(\alpha)=w_\alpha$ for all $\alpha\in IOF_n^{par}\backslash \{\varepsilon\}$. We start at the top by constructing the transformation $\alpha_{v_Aw^*}$ for any  $v_Aw^*\in W_n$ different from $v_{\overline{n}}$. Let $v_Aw^*\in W_n\backslash \{v_{\overline{n}}\}$. We have  $w\in Q_0, A\subseteq A_w$, and there are $w_1,...,w_m\in W_u\cup W_x$ such that $w=w_1...w_m$ for some positive integer $m$. For $k\in\{1,...,m\}$, we define $a_k=k_u+2$  and $b_k=i_k+2j_k+2$, whenever $w_k\in W_x$. On the other hand, we define $a_k=i_k+2j_k+2$  and $b_k= k_x+2$, whenever $w_k\in W_u$. It is easy to verify that $a_m=b_m$. We put
\begin{center}
	$\alpha_{v_Aw^*}=\overline{v}_A 
	\begin{pmatrix}
	1+1_u-min\{1_u,1_x\}...1_u& a_1...2_u & \cdots & a_{m-1}...m_u & a_m...n \\
	1+1_x-min\{1_u,1_x\}...1_x& b_1...2_x & \cdots & b_{m-1}...m_x & b_m...n
	\end{pmatrix}$. 
\end{center}
For convenience, we also give 
$\alpha_{v_Aw^*}= \bigl(\begin{smallmatrix}
d_1 & &d_2& &   \cdots &   &d_p \\
m_1 & &m_2 & & \cdots &    & m_p
\end{smallmatrix}\bigr)$ for some positive integer $p\leq n$.
In the following, we show that $\alpha_{v_Aw^*}$ is well-defined in the sense that the construction of $\alpha_{v_Aw^*}$ gives a transformation.
\begin{lem} \label{16}
	$\alpha_{v_Aw^*}$ is well-defined.
\end{lem}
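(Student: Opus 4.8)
The plan is to verify that the array defining $\alpha_{v_Aw^*}$ actually describes a well-defined partial injection, i.e.\ that the listed domain entries are pairwise distinct and strictly increasing, the listed image entries are pairwise distinct, the pairing is coherent, and the partial identity $\overline{v}_A$ does not collide with the rest (its removed points $A$ are disjoint from the domain being described). Concretely, I would first record that $w\in Q_0$, so $w$ satisfies \ref{1W}--\ref{4W}, and that $A\subseteq A_w$, so $A$ avoids the ``occupied'' intervals described in \ref{5W}--\ref{7W}; these are exactly the facts that will make the collisions impossible.

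First I would unwind the notation block by block. Within a single block $a_k\ldots(k{+}1)_u$ of the top row (and correspondingly $b_k\ldots(k{+}1)_x$ on the bottom), the entries are consecutive integers, hence distinct and increasing, and the top/bottom blocks have equal length by construction; the identity $a_m=b_m$ noted just before the lemma handles the last block. So the only thing to check is that consecutive blocks fit together without overlap or gap-violation: one needs $(k)_u < a_k$ and $(k)_x < b_k$, and that the largest entry of block $k$ is strictly below the smallest entry of block $k+1$ on each row. For $w_k\in W_u$ this reads $(k)_u=i_k < i_k+2j_k+2 = a_k$ (clear) and $(k)_x = k_x < k_x+2 = b_k$ (clear), while the ``no overlap with next block'' inequalities $i_k+2j_k+2 \le (k{+}1)_u$ and $k_x+2\le(k{+}1)_x$ are precisely condition \ref{3W}; the symmetric case $w_k\in W_x$ uses \ref{4W}. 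Then Lemma~\ref{k<k+1} ($k_u<(k+1)_u$, $k_x<(k+1)_x$) guarantees the block boundaries are genuinely increasing, so the whole top row is strictly increasing and the whole bottom row consists of distinct entries.

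Next I would handle the first block, governed by the quantities $1+1_u-\min\{1_u,1_x\},\ldots,1_u$ on top and $1+1_x-\min\{1_u,1_x\},\ldots,1_x$ on the bottom: both have length $\min\{1_u,1_x\}$, so again the two halves of the block match up and the entries are consecutive; one just needs the top block to start at a positive integer, which holds because $1+1_u-\min\{1_u,1_x\}\ge 1$, and likewise on the bottom. Finally, for the partial-identity part $\overline{v}_A$: I must check $A\cap dom(\alpha_{v_Aw^*})=\emptyset$, i.e.\ no element of $A$ appears among the listed top-row entries. But the top-row entries are exactly the integers in $\{1+1_u-\min\{1_u,1_x\},\dots,1_u\}\cup\bigcup_k\{a_k,\dots,(k{+}1)_u\}\cup\{a_m,\dots,n\}$, and comparing these ranges with the defining clauses \ref{5W}--\ref{7W} of $A_w$ shows that $A_w$ (hence $A\subseteq A_w$) is contained in the complementary ``gaps'' $\{(t_q)_u+2j+2,\dots,(t_q{+}1)_u-1\}$ etc.; this is really the content already extracted in Lemmas~\ref{4}, \ref{7}, \ref{10} read in the forward direction, so I would cite those. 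Assembling these observations gives that $\alpha_{v_Aw^*}$ is a partial injection, i.e.\ an element of $I_n$, which is the assertion.

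The main obstacle I anticipate is purely bookkeeping: making sure that the three pieces of the domain description (the first block, the middle blocks, and the tail $a_m\ldots n$) are consistent at their junctions, that the ``$\min\{1_u,1_x\}$'' shift in the first block produces the right common length on both rows, and that the indices $k_u,k_x,(k{+}1)_u,(k{+}1)_x$ interlace in the precise way dictated by the order defined on $\{t_1,\dots,t_{a+b}\}$ just before Lemma~\ref{k<k+1}. None of this is deep, but it requires carefully translating each of \ref{1W}--\ref{7W} into the exact inequality needed and keeping the $W_u$/$W_x$ cases straight; I would organize the proof as a short case analysis on whether $w_k\in W_u$ or $w_k\in W_x$ (and symmetrically $w_{k+1}$), each case reducing to one of the already-established conditions.
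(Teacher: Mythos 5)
Your overall framing is reasonable, but the proposal skips the one point that actually constitutes the paper's proof, and it contains a false auxiliary claim. The crux is your sentence ``the top/bottom blocks have equal length by construction.'' This is not by construction: the top block is $a_k\ldots(k{+}1)_u$ and the bottom block is $b_k\ldots(k{+}1)_x$, and the equality of lengths, $(k+1)_u-a_k=(k+1)_x-b_k$, is precisely what the paper's proof establishes, by a case analysis on whether $w_k,w_{k+1}$ lie in $W_u$ or $W_x$, unwinding the definitions $k_x=i_k+2\lvert w_k\rvert+2\lvert W_u^k\rvert-2\lvert W_x^k\rvert$, $b_k=k_x+2$, etc., and using identities such as $\lvert W_x^{k}\rvert=\lvert W_x^{k+1}\rvert$ when $w_{k+1}\in W_u$. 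Without this computation nothing guarantees that each column of the displayed array pairs one domain point with one image point, which is the substance of ``well-defined'' here. (For the first block both lengths are $\min\{1_u,1_x\}$ and for the last block $a_m=b_m$ is noted just before the lemma, as you say; it is only the middle blocks that require work, and that work is the entire proof.)

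Second, your final check ``$A\cap dom(\alpha_{v_Aw^*})=\emptyset$'' rests on a misreading of $A_w$. By clause \ref{6w}, when $w_k\in W_u$ the set $A_w$ contains $\{i_k+2j_k+2,\dots,(k+1)_u-1\}=\{a_k,\dots,(k+1)_u-1\}$, which sits \emph{inside} the top-row block $\{a_k,\dots,(k+1)_u\}$, not in a complementary gap; the whole point of the factor $\overline{v}_A$ is to delete such points from the domain of the displayed array. So the disjointness you propose to verify is generally false --- and also unnecessary, since composing a partial injection with the partial identity $\overline{v}_A$ is always defined and merely restricts the domain. Dropping that step and supplying the block-length computation would bring your argument in line with the paper's.
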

\begin{proof}
	Let $k\in\{1,...,m-1\}$.  
	Suppose $w_k, w_{k+1}\in W_u$. We have $k_u=i_k$, $k_x=i_k+2\lvert w_k \rvert+2\lvert W_u^k\rvert-2\lvert W_x^k\rvert, (k+1)_u=i_{k+1}, (k+1)_x=i_{k+1}+2\lvert w_{k+1}\rvert+2\lvert W_u^{k+1}\rvert-2\lvert W_x^{k+1}\rvert$, and $a_k=i_k+2j_k+2, b_k=k_x+2$.   
	Then $(k+1)_u-a_k=i_{k+1}-(i_k+2j_k+2)$ and $(k+1)_x-b_k=i_{k+1}+2\lvert w_{k+1}\rvert+2\lvert W_u^{k+1}\rvert-2\lvert W_x^{k+1}\rvert-k_x-2=i_{k+1}+2\lvert w_{k+1}\rvert+2\lvert W_u^{k+1}\rvert-2\lvert W_x^{k+1}\rvert-i_k-2\lvert w_k \rvert-2\lvert W_u^k\rvert+2\lvert W_x^k\rvert-2=i_{k+1}-i_k-2j_k-2=i_{k+1}-(i_k+2j_k+2)$. Therefore, $(k+1)_u-a_k=(k+1)_x-b_k$. For the rest cases ($w_k\in W_u$ and $w_{k+1}\in W_x$, $w_k\in W_x$ and $w_{k+1}\in W_u$ as well as $w_k, w_{k+1}\in W_x$), a proof similar as above will eventually show that $(k+1)_u-a_k=(k+1)_x-b_k$. Furthermore,
	suppose $d_p=m_p$. Let $k\in\{1,...,m\}$ and $w_k\in W_u$.  We have $a_k-k_u=i_k+2j_k+2-k_u=i_k+2j_k+2-i_k=2j_k+2$ 
	and $b_k-k_x=k_x+2-k_x=2$. Thus, $a_k-k_u\neq b_k-k_x$. For the case $w_k\in W_x$, we can show $a_k-k_u\neq b_k-k_x$ in the same way.   Continuously, suppose $d_p\neq m_p$. By the previous part of the proof, we have $a_k-k_u\neq b_k-k_x$ for all $k\in\{1,...,m-1\}$. Moreover, we observe that $d_p\notin\{a_m,...,n\}$ and $m_p\notin\{b_m,...,n\}$ because $n-a_m=n-b_m$. This implies, $d_p=m_u$ and $m_p=m_x$. By any of the above, we can conclude that $\alpha_{v_Aw^*}$ is well-defined. 
\end{proof}

The proof of Lemma \ref{16} shows $(k+1)_u-a_k=(k+1)_x-b_k$ for all $k\in\{1,...,m-1\}$.  $a_k-k_u\neq b_k-k_x$ for all $k\in\{1,...,m\}$, whenever $d_p=m_p$ and  $a_k-k_u\neq b_k-k_x$  for all $k\in\{1,...,m-1\}$ and $d_p=m_u, m_p=m_x$, whenever $d_p\neq m_p$. Furthermore, observing by trivial calculation, $a_k-k_u\geq 2$ and  $b_k-k_x\geq 2$. Therefore, if there exists $i\in\{1,...,p-1\}$, where $d_{i+1}-d_i\neq m_{i+1}-m_i$, then $d_i\in\{1_u,...,(m-1)_u\}(\cup\{m_u\})$, $m_i\in\{1_x,...,(m-1)_x\}(\cup\{m_x\})$ and we put $k_u=d_{r_k}, k_x=m_{r_k}$ for all $k\in\{1,...,m-1\}(\cup\{m\})$ (we put $r_m=p$, whenever $d_p\neq m_p$).   This gives the unique set $\{r_1,...,r_m\}$ as required the definition of $w_{\alpha_{v_Aw^*}}$. Moreover, it need to show that $\alpha_{v_Aw^*}\in IOF_n^{par}\backslash \{\varepsilon\}$ by checking  (i)-(iv) of  Proposition \ref{4 choice}. We will now show that $\alpha_{v_Aw^*}\in IOF_n^{par} $ as well as $w_{\alpha_{v_Aw^*}}=v_Aw^*$. This gives the tools to calculate that $\lvert W_n \rvert \leq \lvert IOF_n^{par} \rvert$.
\begin{lem} \label{17}
	$\alpha_{v_Aw^*}\in IOF_n^{par}\backslash \{\varepsilon\}$.
\end{lem}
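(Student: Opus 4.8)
The goal is to verify that the partial injection $\alpha_{v_Aw^*}$ constructed above actually lies in $IOF_n^{par}$, and this is done by checking the four conditions (i)–(iv) of Proposition \ref{4 choice}. First I would fix the notation: write $\alpha_{v_Aw^*}=\overline v_A\beta$ where $\beta$ is the big two-row array indexed by the blocks $[1+1_u-\min\{1_u,1_x\},\ldots,1_u]$, $[a_k,\ldots,k_u]$ ($k\geq 1$) and $[a_m,\ldots,n]$ in the top row, with the parallel blocks $[1+1_x-\min\{1_u,1_x\},\ldots,1_x]$, $[b_k,\ldots,k_x]$, $[b_m,\ldots,n]$ in the bottom row. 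Since $\overline v_A$ is a restriction of the identity, $\alpha_{v_Aw^*}$ is $\beta$ with the points of $A$ deleted from the domain; hence it suffices to check that $\beta$ (before restriction) is a well-defined order-preserving parity-preserving injection and then note that deleting domain points from $A$ preserves all four properties. Well-definedness of $\beta$ as a transformation is exactly Lemma \ref{16}, which I may invoke, together with the displayed consequences ``$(k+1)_u-a_k=(k+1)_x-b_k$'' and ``$a_k-k_u\geq 2$, $b_k-k_x\geq 2$'' recorded after it; these say the top and bottom blocks have matching lengths, so the array is genuinely a function, and by Lemmas \ref{k<k+1}, \ref{w in Q} (which give $k_u<(k+1)_u$, $k_x<(k+1)_x$ and the spacing inequalities \ref{3W}, \ref{4W}) the blocks are pairwise disjoint and listed in increasing order, so $\beta$ is injective with strictly increasing domain and range; this already gives (i) and the injectivity.

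Next I would verify (ii), the parity of $d_1$ versus $m_1$. The smallest domain point $d_1$ is either $1+1_u-\min\{1_u,1_x\}$ (the start of the first block) or, if that block is empty, $a_1$; correspondingly $m_1$ is $1+1_x-\min\{1_u,1_x\}$ or $b_1$. In the first case $d_1-m_1=1_u-1_x$, and one checks from the definition of the indicators $k_u,k_x$ (and the analysis in Lemma \ref{10}/\ref{111}, where $1_u-1_x$ is shown to be even) that $d_1\equiv m_1\pmod 2$. In the block cases one uses $a_k\equiv k_u+2\equiv k_u$ and $b_k\equiv k_x$ together with the fact that within the same block top and bottom entries move in lockstep, reducing everything to the parity relation between $k_u$ and $k_x$, which follows because $k_u-k_x=2\lvert W_x^k\rvert-2\lvert W_u^k\rvert-2\lvert w_k\rvert$ (or the symmetric expression) is even by inspection of the two display formulas defining $k_x$ and $k_u$. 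So (ii) is a parity bookkeeping argument.

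Finally, (iii) and (iv) — the conditions relating consecutive gaps $d_{i+1}-d_i$ and $m_{i+1}-m_i$ — split into two regimes. When $d_i,d_{i+1}$ lie in the \emph{same} block (one of $[a_k,\ldots,k_u]$, $[a_m,\ldots,n]$, or the first block) the top and bottom entries both increase by $1$, so $d_{i+1}-d_i=1=m_{i+1}-m_i$ and both (iii), (iv) hold trivially. The only interesting case is when $d_i=k_u$ is the last point of block $k$ and $d_{i+1}=a_{k}$ (or $=a_m$, or the jump from the first block to the second); then $d_{i+1}-d_i=a_k-k_u$ and $m_{i+1}-m_i=b_k-b_{k}$... more precisely $m_{i+1}-m_i=b_k-k_x$, and the post-Lemma-\ref{16} remarks tell us $a_k-k_u\neq b_k-k_x$ (so they are never simultaneously $1$, giving (iii) in the direction that matters) and, looking at the explicit formulas, one side equals $2$ while the other equals $2j_k+2$ — both even — which gives (iv); and when $d_p\neq m_p$ the extra jump at the top from $m_u$ to $a_m$ versus $m_x$ to $b_m$ is handled by the same formulas with $r_m=p$. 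The main obstacle, and where I would spend the most care, is precisely this case analysis of the boundary jumps between consecutive blocks: one must match up which block boundary corresponds to an ``$r_i$'' index, track whether $w_k\in W_u$ or $w_k\in W_x$ (the roles of $a_k,b_k$ swap), and confirm that in every configuration the pair $(a_k-k_u,\ b_k-k_x)$ is $(2,2j_k+2)$ or $(2j_k+2,2)$ — never $(1,\cdot)$, never $(\cdot,1)$, and never two odd numbers — so that (iii) and (iv) hold as stated. Once this is done, restricting by $\overline v_A$ only removes domain points and cannot destroy (i)–(iv), and $\alpha_{v_Aw^*}\neq\varepsilon$ because $A\subsetneq\overline n$ (as $v_Aw^*\neq v_{\overline n}$), completing the proof.
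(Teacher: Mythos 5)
Your proposal is correct and follows essentially the same route as the paper: verify conditions (i)--(iv) of Proposition \ref{4 choice} using the block structure of $\alpha_{v_Aw^*}$, the evenness of $k_u-k_x$, and the identities $(k+1)_u-a_k=(k+1)_x-b_k$ and $(a_k-k_u,b_k-k_x)\in\{(2j_k+2,2),(2,2j_k+2)\}$ recorded after Lemma \ref{16}. The paper organizes the gap analysis for (iii) and (iv) slightly differently (arguing that any $i$ with $d_{i+1}-d_i\neq m_{i+1}-m_i$ forces $d_i=k_u$, $m_i=k_x$), but the substance is the same.
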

\begin{proof}
Clearly $\alpha_{v_Aw^*}\neq \varepsilon$.	We will prove that $\alpha_{v_Aw^*}$ is satisfied (i)-(iv) in Proposition \ref{4 choice}.	We observe that $d_1<d_2<\cdot\cdot\cdot<d_p$ and $m_1<m_2<\cdot\cdot\cdot<m_p$  by definition of $\alpha_{v_Aw^*}$. We have $1_u-d_1=1_x-m_1$, i.e.  $1_u-1_x=d_1-m_1$. By the definition of $k_u$ and $k_x$, for  $k\in\{1,...,m\}$, we observe that $1_u-1_x$ is even, i.e. $d_1-m_1$ is even. Thus, $d_1$ and $m_1$ have the same parity.   Let $d_{i+1}-d_i=1$ for some $i\in\{1,...,p-1\}$. Then $d_i\in dom(\alpha)\backslash\{1_u,...,m_u\}$ implies $m_{i+1}-m_i=d_{i+1}-d_i=1$. 
	Let $m_{i+1}-m_i=1$ for some $i\in\{1,...,p-1\}$. Then $m_i\in im(\alpha)\backslash\{1_x,...,m_x\}$ implies $d_{i+1}-d_i=m_{i+1}-m_i=1$.  Let $d_{i+1}-d_i$ is even. Suppose $d_{i+1}-d_i\neq m_{i+1}-m_i$. This gives $d_i=k_u$ and $m_i=k_x$ for some $k\in\{1,...,m-1\}$. By the definition of $k_u$ and $k_x$, we observe that $k_u-k_x$ is even. Moreover, $(k+1)_u-d_{i+1}=(k+1)_x-m_{i+1}$ since $(k+1)_u- (k+1)_x$ is even, we have $d_{i+1}-m_{i+1}$ is even. Then $d_{i+1}$, $d_i$ and $d_i$, $m_i$ as well as $d_{i+1}, m_{i+1}$ have the  same parity. This implies $m_{i+1}, m_i$ have the same parity, i.e. $m_{i+1}-m_i$ is even. Conversely, we can prove  similarly that, if $m_{i+1}-m_i$ is even then $d_{i+1}-d_i$ is even. By Proposition \ref{4 choice}, we get $\alpha_{v_Aw^*}\in IOF_n^{par}$. 
\end{proof}
Lemma \ref{17} shows $\alpha_{v_Aw^*}\in IOF_n^{par}\backslash \{\varepsilon\}$. We can construct $f(\alpha_{v_Aw^*})=w_{\alpha_{v_Aw^*}}$ where $w_{\alpha_{v_Aw^*}}=v_{\tilde{A}}\hat{w}^*_{\alpha_{v_Aw^*}}$ with $\hat{w}=\hat{w}_1...\hat{w}_{m}$ for $\hat{w}_1,...,\hat{w}_{m}\in W_u\cup W_x$ and $\tilde{A}\subseteq \overline{n}$. We will prove $f$ is surjective in the next Lemma.

\begin{lem} \label{21}
	Let $v_Aw^*\in W_n\backslash \{v_{\overline{n}}\}$. Then there is $\alpha\in IOF_n^{par}\backslash \{\varepsilon\}$ with $v_Aw^*=w_{\alpha}$.
\end{lem}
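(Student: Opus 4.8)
The plan is to exhibit a witness explicitly: take $\alpha := \alpha_{v_Aw^*}$, the transformation constructed above, so that Lemma~\ref{17} already gives $\alpha\in IOF_n^{par}\setminus\{\varepsilon\}$. It then remains only to verify that running the construction $\beta\mapsto w_\beta$ described above on $\beta=\alpha$ returns exactly $v_Aw^*$. Writing $w_\alpha=v_{\tilde A}\,w_\alpha^*$ in its standard form, this amounts to the two equalities $w_\alpha^*=w^*$ and $\tilde A=A$. (When $v_Aw^*$ has no $w^*$-part, i.e. $v_Aw^*=v_A$ with $A\subsetneq\overline n$, one simply takes $\alpha=\overline v_A$ and the verification below is immediate; so from now on we assume $w=w_1\ldots w_m$ with $w_k\in W_u\cup W_x$.)

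First I would recover the combinatorial data of the construction from the block structure of $\alpha_{v_Aw^*}$. As recorded in the paragraph preceding Lemma~\ref{16}, on $\mathrm{dom}(\alpha_{v_Aw^*})$ the relations $a_k-k_u\neq b_k-k_x$ and $(k+1)_u-a_k=(k+1)_x-b_k$ force the breakpoints of $\alpha_{v_Aw^*}$ (the indices $i$ with $d_{i+1}-d_i\neq m_{i+1}-m_i$) to be precisely the transitions between consecutive blocks of the defining matrix, with $d_{r_k}=k_u$ and $m_{r_k}=k_x$, and with $d_p=m_u,\ m_p=m_x$ in the case $d_p\neq m_p$; the bookkeeping of whether the last block $\{a_m,\dots,n\}$ is empty is exactly what the special rule $r_{l+1}=p$ in the definition of $w_\beta$ absorbs. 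At the $k$-th transition the $d$-jump is $2\lvert w_k\rvert+2$ and the $m$-jump is $2$ when $w_k\in W_u$ (symmetrically when $w_k\in W_x$), so the subword attached there by the construction has length $\frac{1}{2}\big((d\text{-jump})-(m\text{-jump})\big)=\lvert w_k\rvert$ and initial index $i_k$; hence it equals $u_{i_k,\lvert w_k\rvert}=w_k$ (resp. $x_{i_k,\lvert w_k\rvert}=w_k$). Thus the word $w_1\ldots w_{l+1}$ produced from $\alpha$ agrees with the original $w_1\ldots w_m$ up to a trailing empty factor, and since the passage to $w_\alpha^*$ — collect the $W_u$-factors in order, then append the reversed, inverted $W_x$-factors — is literally the rule defining $w^*=W_u^0(W_x^0)^{-1}$ from $w$, we conclude $w_\alpha^*=w^*$.

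It remains to prove $\tilde A=A$, and this is the step I expect to be the main obstacle. Because $\alpha_{v_Aw^*}$ carries the left factor $\overline v_A$, the indices missing from the enumeration $d_1<d_2<\cdots$ of $\mathrm{dom}(\alpha)$ are exactly the elements of $A$ together with the elements lying strictly between two consecutive matrix blocks, and the recursive steps (1)--(6) in the definition of $w_\beta$ prepend $v_\rho$ precisely for those missing $\rho$ that are not absorbed into some subword $w_k$, namely those strictly interior to a translation run, those at the top end, and those at the bottom end. The work is then to check that the index ranges these steps produce coincide term by term with the ranges in the recursive definition of $A_w$: step (3) reproduces the ``middle'' sets $\{i_k+2\lvert w_k\rvert+2,\dots,(k+1)_u-1\}$ and $\{k_u+2,\dots,(k+1)_u-1\}$ of \ref{6w}, steps (1)--(2) reproduce the ``top'' set $A_m$ of \ref{5W}, and steps (4)--(6) reproduce the ``bottom'' adjustment of \ref{7W}. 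Using $A\subseteq A_w$ (which holds since $v_Aw^*\in W_n$) together with the fact that the elements of $\overline n\setminus\mathrm{dom}(\alpha_{v_Aw^*})$ contributed by the factor $\overline v_A$ are exactly $A$, a comparison of the two descriptions gives $\tilde A=A$. Combining $w_\alpha^*=w^*$ with $\tilde A=A$ yields $w_\alpha=v_{\tilde A}w_\alpha^*=v_Aw^*$, so $\alpha=\alpha_{v_Aw^*}$ is the required transformation.
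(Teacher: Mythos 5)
Your proposal is correct and follows essentially the same route as the paper: the witness is $\alpha_{v_Aw^*}$ (with Lemma~\ref{17} supplying membership in $IOF_n^{par}\setminus\{\varepsilon\}$), and the verification splits into showing $w_\alpha^*=w^*$ via the block jumps and $\tilde A=A$ by matching the ranges produced by rules (1)--(6) against $A_1\cup A_2\cup A_3\setminus\{d_1,\dots,d_p\}$ --- which is precisely the two-inclusion case analysis the paper carries out in detail, so what you leave as ``work to be checked'' is exactly the content of the paper's proof rather than a missing idea. Your explicit treatment of the degenerate words $v_A$ (taking $\alpha=\overline{v}_A$) is a small point the paper's proof passes over silently.
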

\begin{proof}
	We have $w_{\alpha_{v_Aw^*}}=v_{\tilde{A}}\hat{w}^*_{\alpha_{v_Aw^*}}$ where
	$\hat{w}=\hat{w}_1...\hat{w}_{m}$ with $\hat{w}_1,...,\hat{w}_{m}\in W_u\cup W_x$ and $\tilde{A}\subseteq \overline{n}$. First, our goal is to show that $\hat{w}=w$.  Suppose $d_p=m_p$ and let $k\in\{1,...,m\}$ such that $b_k-k_x>a_k-k_u$. By definition of $\hat{w}_k$, we have $\hat{w}_k=x_{k_x,\frac{(b_k-k_x)-(a_k-k_u)}{2}}$ and $k_x=i_k$. Then $\frac{(b_k-k_x)-(a_k-k_u)}{2}=\frac{i_k+2j_k+2-i_k-k_u-2+k_u}{2}=j_k$, i.e. $\hat{w}_k=x_{i_k,j_k}=w_k$. For the case $b_k-k_x<a_k-k_u$, we can prove that $\hat{w}_k=w_k$ in a similar way. This gives $\hat{w}_1...\hat{w}_{m}=w_1...w_m$. \\
	Suppose $d_p\neq m_p$.  We have $a_k-k_u\neq b_k-k_x$ for all $k\in\{1,...,m-1\}$ and by a similar proof as above, we have $\hat{w}_1...\hat{w}_{m-1}=w_1...w_{m-1}$. If $m_p<d_p$ then $\hat{w}_{m}=x_{m_p,\frac{d_p-m_p}{2}}$ and $m_p=m_x=i_m$. Then $\frac{d_p-m_p}{2}=\frac{m_u-m_x}{2}=\frac{i_m+2j_m-i_m}{2}=j_m$, i.e. $\hat{w}_{m}=x_{i_m,j_m}=w_m$. For the case   $m_p>d_p$, we can prove  that $\hat{w}_{m}=w_m$ in similar way. Thus, $\hat{w}_1...\hat{w}_{m-1}\hat{w}_{m}=w_1...w_{m-1}w_m$. 	Then $ w=\hat{w}$, i.e. $w^*=\hat{w}^*_{\alpha_{v_Aw^*}}$. The next goal is to show that $A=\tilde{A}$. \\
	
	\noindent	1) To show that $A\subseteq\tilde{A}$:	Let $a\in A$. We have $A\subseteq A_w$ since $v_Aw^*\in W_n$. Therefore, we have the following cases: 
	$a\in\{a_m,...,n\}=A_1$ or $a\in\{a_k,...,(k+1)_u-1\}=A_2$ for some $k\in\{1,...,m-1\}$ or $a\in\{1+1_u-min\{1_u,1_x\},...,1_u-1\}=A_3$. 
	If $a\in A_1$ and $m_p\neq d_p$ then $a\in \tilde{A}$ since (1.1) and (1.2), respectively. If $a\in A_1$  and $a\in\{d_p+1,...,n\}$ with $m_p=d_p$ then $a\in \tilde{A}$ since (1.3) and (2), respectively. 
	Suppose $a\in A_2$ with $a\in\{a_k,...,d_{r_k+1}-1\}$. If $2< d_{r_k+1}-d_{r_k} < m_{r_k+1}-m_{r_k}$ then $w_k\in W_x$. Note $a_k=k_u+2=d_{r_k}+2$. Thus, $a\in \tilde{A}$ since (3.3). If $2< m_{r_k+1}-m_{r_k} < d_{r_k+1}-d_{r_k}$ then $w_k\in W_u$. Note $d_{r_k+1}-a_k=m_{r_k+1}-b_k, b_k=k_x+2$, and  $a_k=a_k-b_k+b_k=d_{r_k+1}-m_{r_k+1}+k_x+2=d_{r_k+1}-m_{r_k+1}+m_{r_k}+2$. Thus, $a\in \tilde{A}$ since (3.2). 
	Suppose $a\in A_3$. If $1<d_1\leq m_1$ and $a\in\{1,...,d_1-1\}$ then $a\in \tilde{A}$ since (5).	If $1<m_1<d_1$ and $a\in\{d_1-m_1+1,...,1_u-1\}$ then $a\in \tilde{A}$ since (6) (note that $1_u-1_x=d_1-m_1$).  Suppose $a\in A_1\cup A_2\cup A_3$ and  there exists $s\in\{2,...,p\}$ such that $d_s-d_{s-1}=m_s-m_{s-1}\geq 2$  with
	$a\in\{d_{s-1}+1,...,d_s-1\}$ then  $a\in \tilde{A}$ since (3.1). By any of the above, we have $A\subseteq\tilde{A}$. \\
	
	\noindent	2) To show that $\tilde{A}\subseteq A$: Let $A_1=\{1+1_u-min\{1_u,1_x\},...,1_u-1\} , A_2=\{a_1,...,2_u-1\}\cup\{a_2,...,3_u-1\}\cup...\cup\{a_{m-1},...,m_u-1\},\mbox{and} \ A_3=\{a_m,...,n\}$. Because $A\subseteq A_w$, we have $A\subseteq A_1\cup A_2\cup A_3$ and $A\cap\{d_1,...,d_p\}=\emptyset$. This implies $A\subseteq A_1\cup A_2\cup A_3\backslash \{d_1,...,d_p\}$. Conversely, we have $A_1\cup A_2\cup A_3\backslash \{d_1,...,d_p\}\subseteq A$ by the definition of $\alpha_{v_Aw^*}$. Thus, $A=A_1\cup A_2\cup A_3\backslash \{d_1,...,d_p\}$. \\
	Let $a\in\tilde{A}$. By the definition of $\tilde{A}$, we can observe that $a\neq d_i$ for all $i\in\{1,...,p\}$.
	Suppose $a$ is given by (1.1) or (1.2) or (1.3) or (2). Then $a\in A_3\backslash\{d_1,...,d_p\}$.
	Suppose $a$ is given by (3.1). Then $a\in A_1\cup A_2\cup A_3\backslash \{d_1,...,d_p\}$. 
	Suppose $a$ is given by (3.2), i.e. $a\in\{d_s-m_s+m_{s-1}+2,...,d_s-1\}$ for some $s\in\{2,...,p\}$.  We have already shown that there is $k\in\{1,...,m-1\}$ such that $d_s-m_s+m_{s-1}+2=a_k$. Then $a\in A_2\backslash\{d_1,...,d_p\}$. 
	Suppose $a$ is given by (3.3). Then $a\in A_2\backslash\{d_1,...,d_p\}$. 
	Suppose $a$ is given by (5). Then $a\in A_1\backslash\{d_1,...,d_p\}$. 
	Suppose $a$ is given by (6). Then $a\in A_1\backslash\{d_1,...,d_p\}$ (note $d_1-m_1=1_u-1_x$). 
	Therefore, we have $a\in A$, i.e. $\tilde{A}\subseteq  A$. \\
	By 1) and 2), we get $A=\tilde{A}$. This implies $v_Aw^*=v_{\tilde{A}}\hat{w}^*=w_{\alpha_{v_Aw^*}}$.      
\end{proof}
Lemma \ref{21} provides that $f$ is surjective. This gives $\lvert W_n \rvert \leq \lvert IOF_n^{par}\rvert$. We adapt now our alphabet and relations to Theorm \ref{kc} and observe by following: As already mentioned,    $\overline{X}_n=\{\overline{s}:s\in X_n\}$ is a generating set for the monoid $IOF_n^{par} $. From Lemma \ref{1}, we can conclude $\overline{X}_n$ satisfies all  the relations from $\overline{R}=\{\overline{s}_1\approx \overline{s}_2: s_1\approx s_2\in R\}$. Corollary  \ref{co 1} shows that for all $w\in\overline{X}_n^*$, there is $w'\in \overline{W}_n $ such that $w\approx w'$ is a consequence of $\overline{R}$   and consequently, $\overline{R}\subseteq \overline{X}_n^* \times \overline{X}_n^*$ and  $\overline{W}_n \subseteq \overline{X}_n^*$ satisfy the conditions 1.-3.  in Theorem  \ref{kc}. We now have all that we need to complete the  main result.
\begin{thm}
	$\langle \overline{X}_n \ \vert \ \overline{R} \rangle$ is a monoid presentation for $IOF_n^{par} $.
\end{thm}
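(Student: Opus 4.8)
The plan is to apply Theorem \ref{kc} verbatim, taking $X=\overline{X}_n$, $R=\overline{R}$, and $W=\overline{W}_n$; all three hypotheses of that theorem have in fact already been established in the preceding sections, so the proof reduces to collecting them. Recall first that $\overline{X}_n$ is a generating set for $IOF_n^{par}$ by the result of \cite{Apa} recalled in the Preliminaries, so the set-up of Theorem \ref{kc} applies.

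Hypothesis~1, that $\overline{X}_n$ satisfies every relation of $\overline{R}$, is precisely Lemma \ref{1}, since $\overline{R}$ arises from $R$ by substituting each letter by the corresponding transformation. Hypothesis~2, that every $w\in\overline{X}_n^{*}$ equals modulo $\overline{R}$ some word of $\overline{W}_n$, is the translation of Corollary \ref{co 1}, which asserts that for each $w\in X_n^{*}$ there is $w'\in P\subseteq W_n$ with $w\approx w'$ a consequence of $R$; here $\overline{W}_n\subseteq\overline{X}_n^{*}$ because $W_n$ is by construction a set of words. For Hypothesis~3, that $\lvert\overline{W}_n\rvert\le\lvert IOF_n^{par}\rvert$, I would use the surjectivity of the map $f\colon IOF_n^{par}\setminus\{\varepsilon\}\to W_n\setminus\{v_{\overline{n}}\}$, $\alpha\mapsto w_\alpha$, established in Lemma \ref{21}: surjectivity gives $\lvert W_n\rvert-1=\lvert W_n\setminus\{v_{\overline{n}}\}\rvert\le\lvert IOF_n^{par}\setminus\{\varepsilon\}\rvert=\lvert IOF_n^{par}\rvert-1$, whence $\lvert W_n\rvert\le\lvert IOF_n^{par}\rvert$, and likewise for $\overline{W}_n$.

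With the three hypotheses in place, Theorem \ref{kc} yields that $IOF_n^{par}$ is defined by $\langle\,\overline{X}_n\ \vert\ \overline{R}\,\rangle$, which is the claim. I do not expect this final step to present any obstacle: the substance of the argument lies entirely in its two inputs, namely the normal-form algorithm leading to Corollary \ref{co 1} (every word rewrites, modulo $R$, to a canonical word $v_Aw^{*}$ with $w\in Q_0$ and $A\subseteq A_w$) and the construction $\alpha\mapsto w_\alpha$ together with its surjectivity in Lemma \ref{21} (showing there are no more canonical words than elements of $IOF_n^{par}$). Once those are granted, the Guess-and-Prove criterion closes the proof immediately.
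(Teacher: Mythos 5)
Your proposal is correct and follows exactly the paper's own argument: the authors likewise verify the three hypotheses of Theorem \ref{kc} by citing Lemma \ref{1}, Corollary \ref{co 1}, and the surjectivity of $f$ from Lemma \ref{21} (which gives $\lvert W_n\rvert\leq\lvert IOF_n^{par}\rvert$), and then invoke the Guess-and-Prove criterion. No differences worth noting.
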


\bibliography{sn-bibliography}

\end{document}